\def\@author#1{\g@addto@macro\elsauthors{\normalsize%
    \def\baselinestretch{1}%
    \upshape\authorsep#1\unskip\textsuperscript{%
      \ifx\@fnmark\@empty\else\unskip\sep\@fnmark\let\sep=,\fi
      \ifx\@corref\@empty\else\unskip\sep\@corref\let\sep=,\fi
      }%
    \def\authorsep{\unskip,\space}%
    \global\let\@fnmark\@empty
    \global\let\@corref\@empty  %% Added
    \global\let\sep\@empty}%
    \@eadauthor={#1}
}
\newcommand{\removelatexerror}{\let\@latex@error\@gobble}
\newtheorem{theorem}{Theorem}[section]
\newtheorem{lemma}[theorem]{Lemma}
\newtheorem{proposition}[theorem]{Proposition}
\newtheorem{corollary}[theorem]{Corollary}
\newtheorem{definition}[theorem]{Definition}
\let\svthefootnote\thefootnote
\newcommand\blankfootnote[1]{%
  \let\thefootnote\relax\footnotetext{#1}%
  \let\thefootnote\svthefootnote%
}
\begin{document}

% \begin{frontmatter}

\title{A Matrix Representation of the Multiple Vehicle Routing Problem for Pickup and Delivery}
%\tnotetext[mytitlenote]{Fully documented templates are available in the elsarticle package on \href{http://www.ctan.org/tex-archive/macros/latex/contrib/elsarticle}{CTAN}.}

%% Group authors per affiliation:
%%\begin{eElsevierame}ef{myfootnote}}
%content
%\end{environment-name}

%% or include affiliations in footnotes:

% \author{Author One\corref{cor1}\fnref{fn1}}
% \ead{email@uni.edu}
% \cortext[cor1]{Corresponding author}
% \fntext[fn1]{Student}

% \author{Author Two\fnref{fn2}}
% \ead{email2@uni.edu}
% \fntext[fn2]{Lecturer}

\author{Jinsun Liu$^{*,1}$, Hyongju Park$^{*,2}$, Matthew Johnson-Roberson$^{3}$, and Ram Vasudevan$^{4}$}

\blankfootnote{This work was supported by the Ford Motor Company.}

\blankfootnote{$^*$ These authors contributed equally to this work.}
\blankfootnote{$^1$ Robotics Institute, University of Michigan Ann Arbor, MI 48109 jinsunl@umich.edu}
\blankfootnote{$^2$ Mechanical Engineering, University of Michigan Ann Arbor, MI 48109 hjcpark@umich.edu}
\blankfootnote{$^3$ Naval Architecture and Marine Engineering, University of Michigan Ann Arbor, MI 48109 mattjr@umich.edu}
\blankfootnote{$^4$ Mechanical Engineering, University of Michigan Ann Arbor, MI 48109 ramv@umich.edu}

\maketitle

% \author[mymainaddress]{Hyongju Park\fnref{fn1}\fnref{fn2}}
% \ead{hjcpark@umich.edu}
% \fntext[fn1]{Both authors contributed equally to this manuscript.}
% \fntext[fn2]{The authors declare no conflict of interest}

% \author[mysecondaryaddress]{Jinsun Liu\fnref{fn1}\fnref{fn2}}
% \ead{jinsunl@umich.edu}

% \author[mythirdaddress]{Matthew~Johnson-Roberson\fnref{fn2}}
% \ead{mattjr@umich.edu}

% \author[mymainaddress]{and Ram Vasudevan\corref{mycorrespondingauthor}\fnref{fn2}}
% \cortext[mycorrespondingauthor]{Corresponding author}
% \ead{ramv@umich.edu}

% \address[mymainaddress]{Department
% 	of Mechanical Engineering, University of Michigan, Ann Arbor,
% 	MI, 48109 USA}
% \address[mysecondaryaddress]{Robotics Institute, University of Michigan, Ann Arbor, MI, 48109 USA}
% \address[mythirdaddress]{Department of Naval Architecture and Marine Engineering, University of Michigan, Ann Arbor,
% 	MI, 48109 USA}

\begin{abstract}
% The classical 
This paper develops a computationally efficient algorithm for the Multiple Vehicle Pickup and Delivery Problem (MVPDP) with the objective of minimizing the tour cost incurred while completing the task of pickup and delivery of customers.
% Most of the optimization-based approaches for solving the MVPDP are developed for a restrictive time window that depend significantly on the reduction of the feasible solution space. 
To this end, this paper constructs a novel $0$--$1$ Integer Quadratic Programming (IQP) problem to exactly solve the MVPDP. 
% This paper presents a novel matrix representation of the Vehicle Routing Problem with Pickup and Delivery (VRPPD), that is a generalization of the classical Traveling Salesman Problem (TSP). The objective of the problem is to design a set of optimal tours for the multiple vehicles that minimizes the travel distance through the vehicle depots and a set of customers' pickup and drop-off points. Each customer is associated with both pickup and drop-off locations, and each vehicle is associated with origin and destination depots.
% duling a tour with the pick-up and drop-off with minimized travel cost.
% In this paper, We propose a novel matrix representation of the problem so as to 
Compared to the state-of-the-art Mixed Integer Linear Programming (MILP) formulation of the problem, the one presented here requires fewer constraints and decision variables.
% Our method presented in this paper has introduces  number of constraints .
To ensure that this IQP formulation of the MVPDP can be solved in a computationally efficient manner, this paper devises a set of sufficient conditions to ensure convexity of this formulation when the integer variables are relaxed. 
In addition, this paper describes a transformation to map any non-convex IQP formulation of the MVPDP into an equivalent convex one. 
% convexity and a transformation to translate any non-convex formulation of the IQP problem into an equivalent convex one. 
% We provide sufficient conditions to ensure that the IQP is convex and methods to transform each non-convex formulation of the IQP problem into an equivalent convex one. 
% We also provide both the sufficient conditions which ensure that the IQP is convex, and methods to transform each non-convex problem into a convex one. 
% In addition, we introduce a novel set of valid constraints tailored for our formulation \ram{I do not understand what the previous sentence means}. 
% \hj{I think the sentence is not crucial, but I just wanted to say that the proposed constraints are designed for our formulation thus is different from the existing ones. Also, the proposed constraints are overall simpler than that found from the existing methods in the sense that the number of constraints are much smaller. I would change my previous sentence to ``In addition, we introduce a novel set of linear constraints to our problem whose number is much smaller that found in the existing formulation.''}
The superior computational efficacy of this convex IQP method when compared to the state-of-the-art MILP formulation is demonstrated through extensive simulated and real-world experiments. 
% We demonstrate through a suite of extensive numerical simulations the time efficiency of our method is competitive against the existing method under small to medium size problem instances.
\end{abstract}

% \begin{keyword}
% Multiple vehicle pickup and delivery problem, Integer quadratic programming, Permutation matrix, Exact algorithm
% %\texttt{elsarticle.cls}\sep \LaTeX\sep Elsevier \sep template
% %\MSC[2010] 00-01\sep  99-00
% \end{keyword}

% \end{frontmatter}

% \linenumbers

\section{Introduction}
%\subsection{Our problem, motivation, mild problem statement }
The Multiple Vehicle Pickup and Delivery Problem (MVPDP) is a variant of the vehicle routing problem that deals with the assignment of customer demands to a set of vehicles and the scheduling of the sequence of customer pick-up and drop-off requests while minimizing travel cost \cite{cici2014assessing,gloss2016designing}.
Even solving the simpler single Vehicle Pickup and Delivery Problem (VPDP) is known to be challenging, since it is equivalent to the one-to-one version of the multi-commodities pickup-and-delivery traveling salesman problem, which belongs to the class of NP-Hard problems \cite{hernandez2009multi}. 
Nevertheless, given the potential utility of an algorithmic solution to this class of pickup and delivery problems, a variety of techniques have been proposed to tackle this problem.
Since a comprehensive survey of techniques to solve this problem developed prior to 2007 has been summarized in a pair of recent surveys \cite{cordeau2003dial,cordeau2007dial}, this paper focuses predominantly on describing more recent efforts.

A number of previous studies have focused primarily on obtaining sub-optimal solutions using heuristics.
For example, recent papers have applied population-based heuristics in the presence of time-window constraints (i.e. constraints on the time at which customers must be picked up and dropped off) and capacity constraints \cite{cherkesly2015population} or multi-stage hybrid heuristics \cite{hernandez2016hybrid}.
Others have explored variants of genetic algorithms \cite{ting2013selective}, particle swarm optimization \cite{goksal2013hybrid}, simulated annealing in the presence of time window constraints \cite{wang2015parallel,gansterer2017multi}, or hybrid heuristic algorithms \cite{ritzinger2016dynamic} to solve the MVPDP. 

Though the computational complexity of constructing the exact solution to the MVPDP problem can be demanding for large numbers of vehicles and demands, exact solution methods are critical for a variety of reasons \cite{cordeau2006branch}.
For instance, techniques that can generate exact solutions can be applied to enhance or validate the quality of solutions that are constructed using heuristic techniques.
One of the first approaches to propose an exact solution to the VPDP relied on dynamic programming \cite{psaraftis1980dynamic}.
The primary drawback of such dynamic programming based techniques is due to the \emph{curse of dimensionality}.
Though approximate dynamic programming approaches have been proposed to solve such VPDP problems when the problem size is large \cite{yin2016energy}, these techniques suffer from the same limitations as heuristic based approaches.
To improve the tractability of techniques that attempt to solve for the exact solution to such problems, some have proposed introducing additional constraints (e.g. time-window constraints) that reduce the number of states while performing dynamic programming \cite{psaraftis1983exact,desrosiers1986dynamic,mahmoudi2016finding}. 
% \hj{{\color{blue}{Most recently in \cite{mahmoudi2016finding}, authors were able to solve a problem with 50 customers with 15 vehicles to its optimality.}}}
% \ram{it would be good if we could summarize how large of problems these guys are capable of solving currently}.

Others have explored techniques to formulate the VPDP or MVPDP as an linear program with integer and real-valued decision variables \cite{cordeau2006branch}. 
These Mixed Integer Linear Programs (MILP) are solved using Branch-and-Bound \cite{colorni2001modeling,barbato2016polyhedral,kalantari1985algorithm}, Branch-and-Cut \cite{cordeau2006branch,cote2012branch,ruland1997pickup}, and Branch and Cut and Price \cite{ropke2009branch,ropke2007models}.
These techniques are usually able to solve problems with up to $18$ customers in several hours \cite{barbato2016polyhedral}; however, with the introduction of additional constraints (e.g. Last-In-First-Out constraints) these techniques can service up to $75$ customers in about an hour of computation time  \cite{cherkesly2016branch}.
With the introduction of time-window constraints, even larger instances of this problem are solvable \cite{dumas1991pickup}.

% This paper proposes a novel formulation of the DARP utilizing permutation matrix in order to obtain the solution more efficiently than the state-of-the-art exact solution methods. 
% In the near future, Mobile-on-Demand (MOD) service utilizing the SAE Level-4-5 fully autonomous vehicles with ride-sharing is expected to become extremely popular \cite{cici2014assessing,gloss2016designing}.
This paper considers a version of the MVPDP where a group of customers are located at some pick-up location in a bounded region. 
Each customer is also associated with a destination, which may be different from the pick-up location, where he/she must be dropped off.
One or more vehicles, each carrying only a limited number of customers, initially placed at some origin depot execute pickup and delivery of the customers, and return back to a designated destination depot after completion.
The objective of the MVPDP is to find a minimum length tour for all vehicles, where each tour starts and ends at the given depots, while servicing all customers along the tours.
As described earlier, the MILP formulations of the MVPDP without the introduction of additional assumptions are only able to address problems with fewer than $18$ customers in less than several hours using state-of-the-art commercial solvers.
This paper proposes a novel matrix-based Integer Quadratic Programming (IQP) formulation of the MVPDP, which in contrast to the MILP approach, is able to address problems with up to $50$ customers in the same amount of time using state-of-the-art commercial solvers.
This is accomplished without introducing constraints that may be hard to estimate \emph{a priori}.

Throughout the text, we make the following assumptions:
First, we assume that all demands are known \emph{a priori}. 
Though demand changes dynamically over time under traffic conditions  \cite{bertsimas1991stochastic,furuhata2013ridesharing}, the performance of optimal routing algorithms typically rely on the quality of the solution to the formulation of the problem where all demands are known \emph{a priori} \cite{berbeglia2007static}. 
In practice, the solution we propose in this paper can be used as the basis of the dynamic version of the MVPDP as has been proposed in various papers \cite{psaraftis1995dynamic,pillac2013review}.
Second, we assume that once a costumer has been picked up by a vehicle, he/she will stay on it until the vehicle reaches his/her destinations. 
This version of the problem, is sometimes called \emph{non-preemptive} \cite{hernandez2009multi}. 
Finally, each location in a tour is either a source or destination for one of the customers. 
In particular, we do not allow a point to simultaneously be the source of one customer and the destination of another customer. 
We do however allow the origin and the destination depot of each vehicle to be the same point and allow the cost incurred between a pair of points to be directional (e.g. different costs for going in one direction versus the opposite direction).

% \subsection{Contributions}
The contributions of this paper are three-fold: 
First, we propose a permutation matrix-representation of the MVPDP that reduces the complexity of the problem by reducing the number of constraints.
Due to this formulation of the problem, our method can be used to solve a broad class of MVPDP including those with multiple vehicle depots.
Second, our proposed IQP formulation can always be made convex (after relaxation of the integer variables).
Convex formulations of IQPs can be solved more rapidly than their non-convex counterparts \cite{buchheim2012effective}. 
Finally, on an extensive set of computational experiments we illustrate the effectiveness of our proposed formulation on various commercial solvers when compared to state-of-the-art MILP formulations on all tested commercial solvers. 

\textit{Organization:}
The remainder of the paper is organized as follows: Section \ref{sec2} introduces notation and briefly reviews the existing state-of-the-art exact solution method for the MVPDP. 
Section \ref{sec3} introduces the matrix representation of the problem. 
Section \ref{sec4} formulates the proposed IQP method to solve the MVPDP and illustrates how to ensure its convexity.
% Our approach on convexifying the IQP is discussed in Section \ref{sec51}.
A suite of numerical simulation results are presented in Section \ref{sec5}, which is followed by the conclusion in Section \ref{sec6}.

\section{Preliminaries}
\label{sec2}
This section introduces the notations used throughout the paper, then reviews the characteristics of the most popular existing exact solution methods for the MVPDP.

\subsection{Notation and Background}

Consider a bounded region $Q \subseteq \mathbb{R}^2$ containing a group of $n$ customers. 
Also consider a group of $k$ vehicles that each have their own origin and destination depots. 
Each vehicle departs at its origin depot and arrives at its destination depot after performing an assigned set of pickup-and-delivery of  customers.
The vehicles must pick-up and deliver all $n$ customers.  
The locations of both origin and destination depot can coincide with one another. 
Each customer has their own origin and destination pair.
We also assume that each vehicle can carry up to $q\geq 1$ customers. 
% The problem of our interest here is to obtain $k$ paths from a graph whose sum of tour costs are minimum, and every customer is serviced. 
%In particular, we are interested in the special case of the problem where equal number of customers is assigned to each vehicle. Thus, we require that $n$ is $k$ divisible; however we may introduce dummy nodes to drop this requirement\footnote{For example, if $n$ is not $k$ divisible, then we can add $n'\leq k-1$ dummy nodes with $0$ trivial cost values such that $n+n'$ is $k$ divisible.}.

For convenience, consider a set of vertices $V$ which correspond to the locations of the vehicle depots and customers' origins and the destinations.
We use both the term \emph{vertex} and \emph{node} interchangeably throughout the text. 
Let $O=\lbrace 1 \rbrace$ be a \emph{virtual node}\footnote{The virtual (dummy) node enables a matrix representation of our problem, and is addition does not affect the computational complexity of our formulation.}, $K_O= \lbrace 2,\dots,k+1 \rbrace$ be an ordered set of the origins for all $k$ vehicles, $K_D = \lbrace k+2, 2k+1 \rbrace$ be the ordered set of the destinations of all $k$ vehicles, $P = \lbrace 2k+2,\dots,2k+1+n \rbrace$ be an ordered set of $n$ pickup nodes, $D = \lbrace 2k+2+n,\dots,2k+2n+1 \rbrace $ be the set of ordered $n$ delivery nodes. 
Note that each origin and destination node are paired. 
% For each $i=2,\dots,k+1$, $i$--$i+k$ forms an origin-destination pair for the depot. 
For instance if a vehicle left the $2$nd node (origin depot) it must be returned to the associated $k+2$th node (destination depot). 
In a similar manner, each of the $n$ customers has an origin-destination pair. 
Finally, let $V = O \cup K_O \cup K_D \cup P \cup D$ be the set of all nodes. 
Let $v = \left| V \right| = 2(n+k)+1$ be the total number of vertices.
%and to simplify our presentation, we order each of the vertices in $V$ (i.e. first $O$, then $K_0$, then $K_D$, then $P$ and finally $D$) and use the index $1,\ldots,v$ to refer to a node.

% For the case when the ordering of the elements of $V$ is important, with slight abuse of notation, we use $V$ also as a \emph{sequence} if it needs to be arranged in a particular order. 
% For example, $V=(1,2,\dots,v-1,v)$ if it is arranged in an ascending order.
% \ram{I do not understand the previous two sentences...}

% \hj{I would like to remove those two sentences and add the following sentence instead:``We note that the each vertex from $V$ corresponds to an unique identifier assigned to vehicles, customers, or vehicle depots from $1,\dots,v$.'' I will not try to abuse the notation $V$ and will use $\sigma$ later to denote an ordered sequence of all the elements of $V$ as a vector.}

 Let $G(V,E,C)$ denote a \emph{weighted directed graph} without \emph{self-loops} using the previously defined vertex set $V$, arc (edge) set, $E \subseteq V \times V$, and a cost matrix $C$. 
Let $A$ be the $v \times v$ adjacency matrix for $G$ where the value of the $[i,j]$th entry (i.e., the entry in the $i$th row and the $j$ column) of the matrix $A$, denoted $a_{ij}$, represents the existence of the directed edge $(i,j)$. 
Thus if an edge exists between node $i$ and node $j$, then $a_{ij}=1$ and $a_{ij}=0$, otherwise. 
$C$ denotes the weight matrix for the graph where each entry of $C$, say $c_{ij}$ accounts for the weight incurred between the directed edge $(i,j)$. 
This weight assigned to each edge may correspond to the travel cost (e.g., fuel cost, trip time).
We assume that $C$ satisfies the triangle inequality\footnote{It is unknown if VPDP has an optimal solution tour if $C$ violates the triangle inequality \cite{kalantari1985algorithm}.}.
% Consider a permutation mapping $\bm{\sigma}:V \rightarrow V$ which is a \emph{bijection} that arranges a given sequence into some order. More precise definition of the mapping will be provided later. 
A \emph{walk} is an alternating sequence of vertices and edges which begins and ends at vertices, i.e., $v_0,e_1,v_1,e_2,\dots,v_{n-1},e_n,v_n$, where $v_i$ are vertices, and $e_i$ are the edge connecting $v_{i-1}$ and $v_i$.
A walk with no repeated edges is called a \emph{tour}, and a walk with no repeated vertices is called a \emph{path}. 
If $v_0=v_n$, then a tour is \emph{closed}. 
If the closed tour has no repeated vertices except for $v_0=v_n$, it is called a \emph{cycle}.
The length of a path from a graph is the number of edges that the path contains.
A cycle graph is a graph that itself is a cycle. 
%We use the terms \emph{closed path} and \emph{cycle} interchangeably throughout the text \ram{this is weird, why use them interchangeably? I don't like this.}.
%\hj{Since this is not important, we can eliminate this sentence. It was just that despite they having the same meaning, and some people seems to prefer using `closed path' to `cycle' and others prefer `cycle' to `closed path'.}
% Let $\lbrace x_{ij} \rbrace_{i,j \in V}$ be a set of decision variables, where for each $i,j \in V$, $x_{ij}=1$ if $i = \bm{\sigma}(j)$, $x_{ij}=0$ otherwise, and let
% $X$ is $v \times v$ decision matrix with $x_{ij}$'s, and $x$ be the vectorization of the matrix $X$:
% \begin{equation}
% \bm{x}=\textup{vec}(X) := [
% x_{11},\dots,x_{v1},x_{12},\dots,x_{v2},\dots,x_{1v},\dots,x_{vv}]^{\top}.
% \end{equation}

We use italic bold font to denote vectors.
Let $\mathcal{P}$ be a class of permutation all matrices with size $v \times v$ whose cardinally is $v!$. 
%For each $i,j \in V$, let $\lbrace c_{ij} \rbrace_{i,j \in V}$ denote the minimum cost incurred while traveling between the edge $(i,j)$ \ram {this is super confusing...why is this different than how you defined $c_{ij}$ from above}. 
%\hj{This sentence has somehow survived from the older version of the draft. We can safely eliminate this redefinition of $c_{ij}$.}
% We restrict ourselves to the case where $c_{ij}$ for each $i,j \in V$ corresponds to the Euclidean distance between the locations of two vertices $i$ and $j$, which implies $c_{ij} = c_{ji}$. 
For a given real symmetric matrix, $M$, let $M \succ 0$ if $M$ is positive-definite and $M \succeq 0$ if $M$ is positive-semi-definite. 
For a given $n\times 1$ vector $\bm{q}$, let $\textup{diag}(\bm{q})$ denote a diagonal matrix where its $i$th diagonal is the $i$th element of $\bm{q}$.

\subsection{Existing Exact Solution Methods for the MVPDP}

We briefly revisit the performance of the existing, state-of-the-art exact solution methods using an MILP formulation of the MVPDP \cite{cordeau2003dial,cordeau2007dial}.
These MILP are usually solved by Branch-and-Bound (B\&B) \cite{colorni2001modeling}, Branch-and-Cut (B\&C) \cite{cordeau2003dial,cordeau2006branch}, Branch-Price-and-Cut \cite{ropke2009branch} and forward Dynamic Programming (DP) \cite{psaraftis1980dynamic,desrosiers1986dynamic}. 
Typically the objective function in such formulations minimizes the total routing cost.
Several have explored modifications that simultaneously minimize customer wait-time by appending a linear term to the objective function \cite{cordeau2006branch,rahmani2016column}.
Given a set of $k$ vehicles, the MVPDP requires finding the $k$ minimum length tours for the vehicles each beginning at the appropriate origin depot and concluding at the appropriate destination depot while servicing all $n$ customers requests.
For a set of $k$ tours to be \emph{feasible}, each tour must satisfy a number of constraints:

\begin{itemize}[leftmargin=*]
\item \textit{Cycle Constraints}: each vehicle must visit a set of nodes exactly once. 
Two sets of constraints are required to enforce a tour to be a cycle.
\begin{itemize}
    \item \textit{Sub-tour Elimination Constraint}: for a tour to be a path it cannot contain any closed tours, called sub-tours, that do not visit every node in $G(V,E)$ \cite{desrochers1991improvements}. 
    To ensure there are no sub-tours in $G(V,E)$, one must consider all subsets of nodes $V$, and make sure that there is an edge leaving a node in the subset and entering a node \emph{not} in the subset. 
    To represent this constraint, one typically requires a combinatorial number of linear inequality constraints as a function of the number of nodes in a graph. 
    It was later discovered that the combinatorial number of constraints can be replaced with a quadratic number of constraints by introducing real-valued slack variables \cite{gendreau1997covering}.
    \item \textit{Degree Constraint}: each vertex must have exactly one incoming and one outgoing edge. 
    This constraint is represented as a set of linear equalities. 
\end{itemize}
\item \textit{Customer Precedence Constraint}: each customer's delivery cannot be preceded by the customer's pickup. 
This constraint is represented by a set of linear constraints; however it requires the addition of non-integer slack variables \cite{savelsbergh1995general}.
\item \textit{Association Constraint}: each pickup demand is paired with a unique delivery demand, and vice versa.  
This constraint is represented using a set of linear constraints \cite{ropke2009branch}.
\item \textit{Capacity Constraint}: each vehicle can carry only a certain number of customers at all times.
This again is represented by a set of linear inequalities.
\end{itemize}
Along with capacity constraint, sub-tour elimination constraint are sometimes referred to as \emph{generalized order constraints} \cite{ruland1997pickup,cordeau2007dial}. 
% % There are a number of constraints which we will mentioned in this section.
We summarize a few notable characteristics of the MILP formulation when compared to the formulation presented in this paper in Table \ref{table1}.
In contrast to the MILP-based formulation, the one presented in this paper does not require non-integer linear decision variables. 
In addition the formulation described in this paper requires far fewer constraints compared to the state-of-the-art MILP formulation.
\begin{table}[]
\small
\centering
\caption{Comparison of the characteristics of the constraints between the method proposed in this paper (IQP) and existing methods (MILP) \cite{cordeau2003dial}.}
\label{table1}
\begin{tabular}{@{}llll@{}}
\toprule
\emph{Type of constraints} & \emph{Program Type} &  \emph{Number of Constraints} \\ \midrule
\multirow{2}{*}{Degree, Association, Capacity} & MILP &  $O(n)$ \\
 & IQP &  $O(n)$ \\ \midrule
\multirow{2}{*}{\begin{tabular}[c]{@{}l@{}}Generalized Order\\ (Sub-tour Elimination, Precedence)\end{tabular}} & MILP &  $O(n^2)$ \\
 & IQP &  $O(n)$ \\ \bottomrule
\end{tabular}
\end{table}
% \hj{From the Table 1, I have combined the capacity constraint to degree and association constraint simply because we can do that}

\section{Permutation Matrix Representation}
\label{sec3}
This section introduces our novel matrix representation of the MVPDP. 
This representation extends a matrix-based formulation for the general quadratic assignment problem \cite{lawler1963quadratic,sahai2017continuous} by developing representations for MVPDP constraints (e.g., precedence, capacity, association)
which enable us to address multiple depots, vehicles, and customers. 
In addition, we develop a technique to transform non-convex quadratic assignment problems into convex ones when the integer variables are relaxed. 
To the best of our knowledge, the method proposed in this paper is the first matrix representation of the problem that can be used to solve MVPDP. 
This formulation of the problem allows one to represent the constraints associated with the pickup-and-delivery problem without requiring the introduction of additional non-integer slack variables, which may unnecessarily impede computational efficiency.

% \subsection{Permutation Matrix Representation of the Decision Variables}
% For convenience, we will define a \emph{vector} to express a closed path.
Consider a sequence $(l_1,l_2,\dots,l_{v-1},l_v)$ where $l_i \in V$ for each $i \in \{1,\ldots,v\}$ and $l_i \neq l_j$ for all $i,j \in \{1,\ldots,v\}$ where $i \neq j$.
For a given set of vertices $V$, denote an arbitrary closed path---that begins at $l_1$ and sequentially visits $l_2,l_3,\dots,l_{v}$ exactly once, and returns back to $l_1$---by $\bm{\sigma}$ which is defined as:
\begin{equation}
%(1,2,3,\dots,v),\, \textup{or},\, 
\bm{\sigma} = [l_2,l_3,\dots,l_{v-1},l_v,l_1]^{\top},
\label{eq:sigma}
\end{equation}
where the initial path $(l_1,l_2)$ is \emph{implicit} in this vector representation.
Consider a tour denoted $\bm{\sigma}^0$ whose elements are drawn in order first from $K_0$, then $K_D$, then $P$, then $D$, and finally from $O$. 
We refer to $\bm{\sigma}^0$ as the \emph{reference tour}.
We refer to any other possible tour as a \emph{permuted tour}.

% In what follows, we will define a \emph{reference} tour so as to use the tour in conjunction with the associated graph and its adjacency matrix as a base-line. Roughly speaking, once the optimal permutation mapping is found, the optimal tour can be retried by permuting the reference tour. Thus, the choice of the reference tour can be arbitrary.
%We define a default path as a cycle, to be a sequence, viz,
%\[
%( 2,3,\dots,n-1,n,1 )
%\]
% Let $\bm{\sigma}^0$ to be a reference (i.e., genetic, default) permutation map which maps an ordered tour
% $
% [1,\dots,v]^{\top}
% $
% to an arbitrarily chosen reference tour.
% If we choose such permuted tour to be 
% $
% [2,3,\dots,v-1,v,1]^{\top},
% $ then to obtain the adjacency matrix of a graph associated with the $\bm{\sigma}^0$-permuted tour, 
Denote the adjacency matrix and the graph associated with the reference tour by $A^0$ and $G^0$, respectively. 
It can be verified that $A^0$ is a $v \times v$ \emph{upper $1$-shift matrix}, whose $[i,j]$th entry is:
\begin{equation}
a_{ij}^0 = 
\begin{cases}
1, & \textup{if }i = j-1+v\left\lfloor \frac{i+1}{v} \right\rfloor, \\
0, & \textup{otherwise},
\end{cases}
\end{equation}
%We note here that depending on the reference tour, the adjacency matrix of the $\bm{\sigma}^0$-permuted tour can vary.
% I NEED TO WORK ON THIS!!!!!
% --------FROM HERE
An example of a graph $G^0$ and its associated adjacency matrix $A^0$  for a reference tour, $\bm{\sigma}^0$ is depicted in Fig. \ref{fig:fig1_1}. 
for ease of presentation, in the remainder of the paper, we use the index $1,\ldots,v$ to refer to the nodes from $O$, then $K_O$, then $K_D$, then $P$, and finally $D$ in order.
For instance, in Fig. \ref{fig:fig1_1}, $\lbrace 6,7,8 \rbrace$ are pickup nodes, $\lbrace 9,10,11 \rbrace $ are delivery nodes, and $1$ is a virtual (dummy) node. 
It can be verified that $\bm{\sigma}^0$ is infeasible since each vehicle's tour is completed before the pickup and delivery of any customers.
Fig. \ref{fig:fig1_2} shows a feasible tour, where the 1st vehicle begins with node $2$ and travels from node $6$ to node $9$ (Origin-Destination pair for customer 1) and arrives at its depot node $4$, while the second vehicle begins with node $3$ and travels from node $8$ to node $11$ (O-D pair for customer 3), then from node $7$ to node $10$ (Origin-Destination pair for customer 2), before reaching depot node $5$. 
As can be seen, the valid tour is a cycle that begins and ends at the virtual node $1$ and visits all other nodes $\lbrace 2,3,\dots,12 \rbrace$ exactly once.
% Also, we note here that each vehicle is associated with both origin and destination depots that correspond to the nodes $\lbrace 3,5 \rbrace$, $\lbrace 2,4 \rbrace$, respectively.
% -------TO HERE
% \begin{figure}
% 	\centering
% 	\includegraphics[width=2.6in]{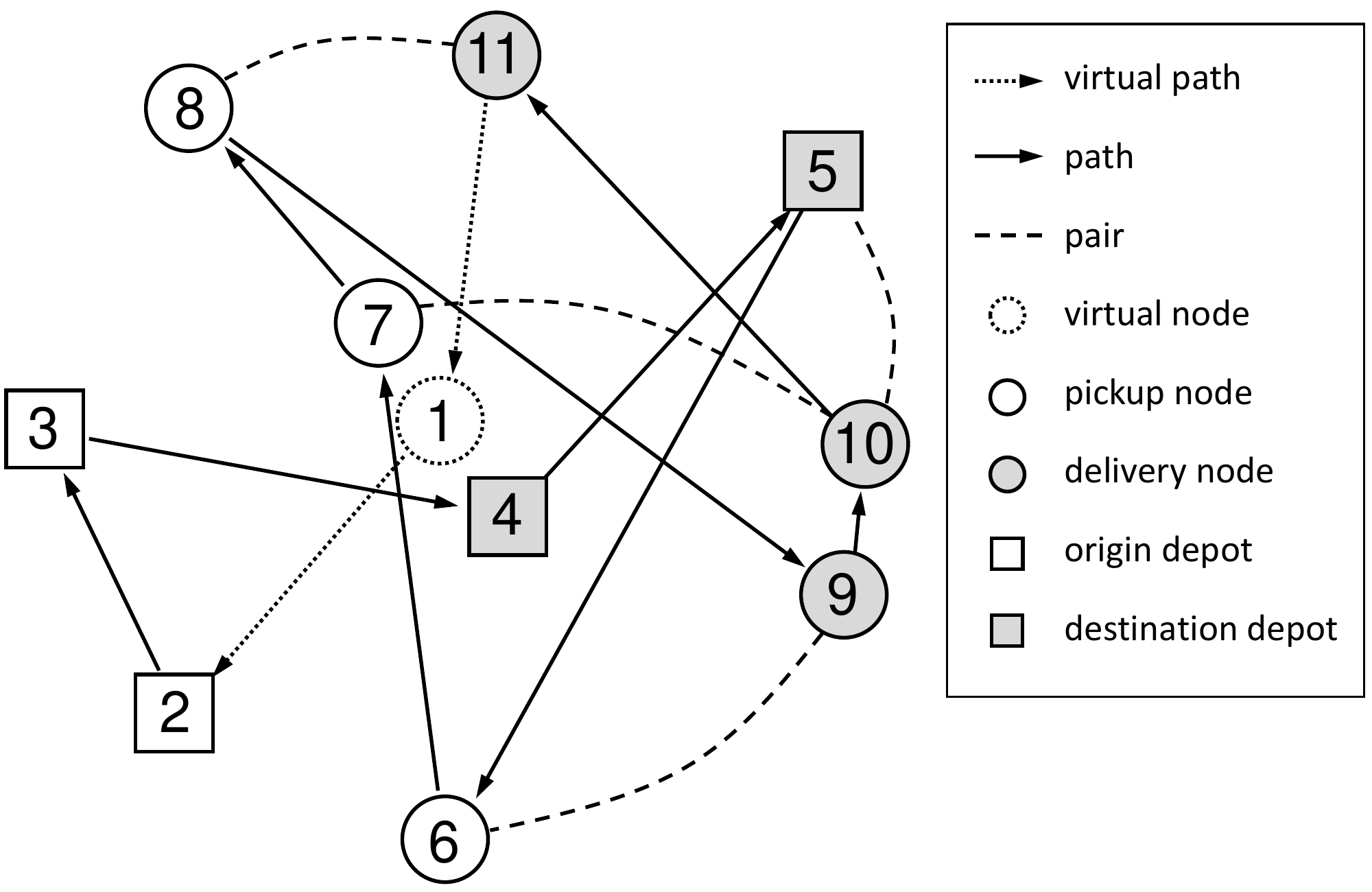}
% 	\includegraphics[width=2.4in]{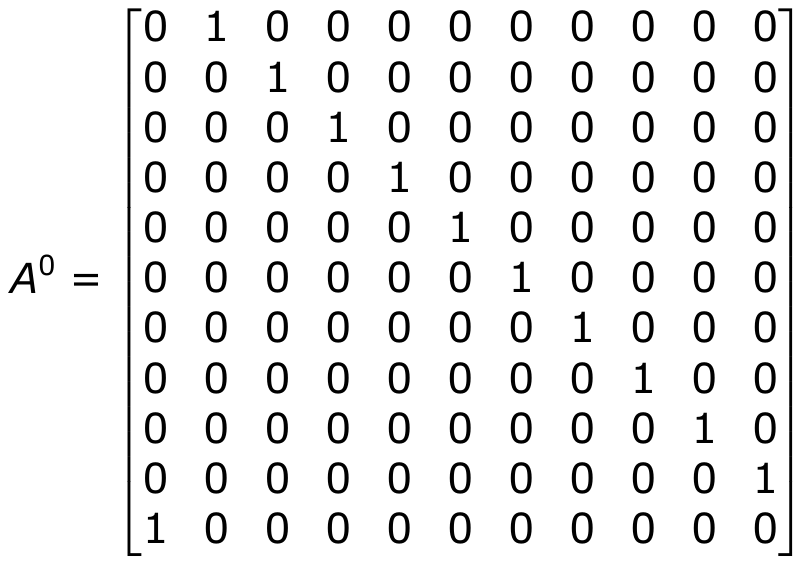}	
% 	\caption{An example of $G^0$ with $n=3$, $k=2$.}
% 	\label{fig:fig1_1}
% \end{figure}
% \begin{figure}
% 	\centering
% 	\includegraphics[width=2.6in]{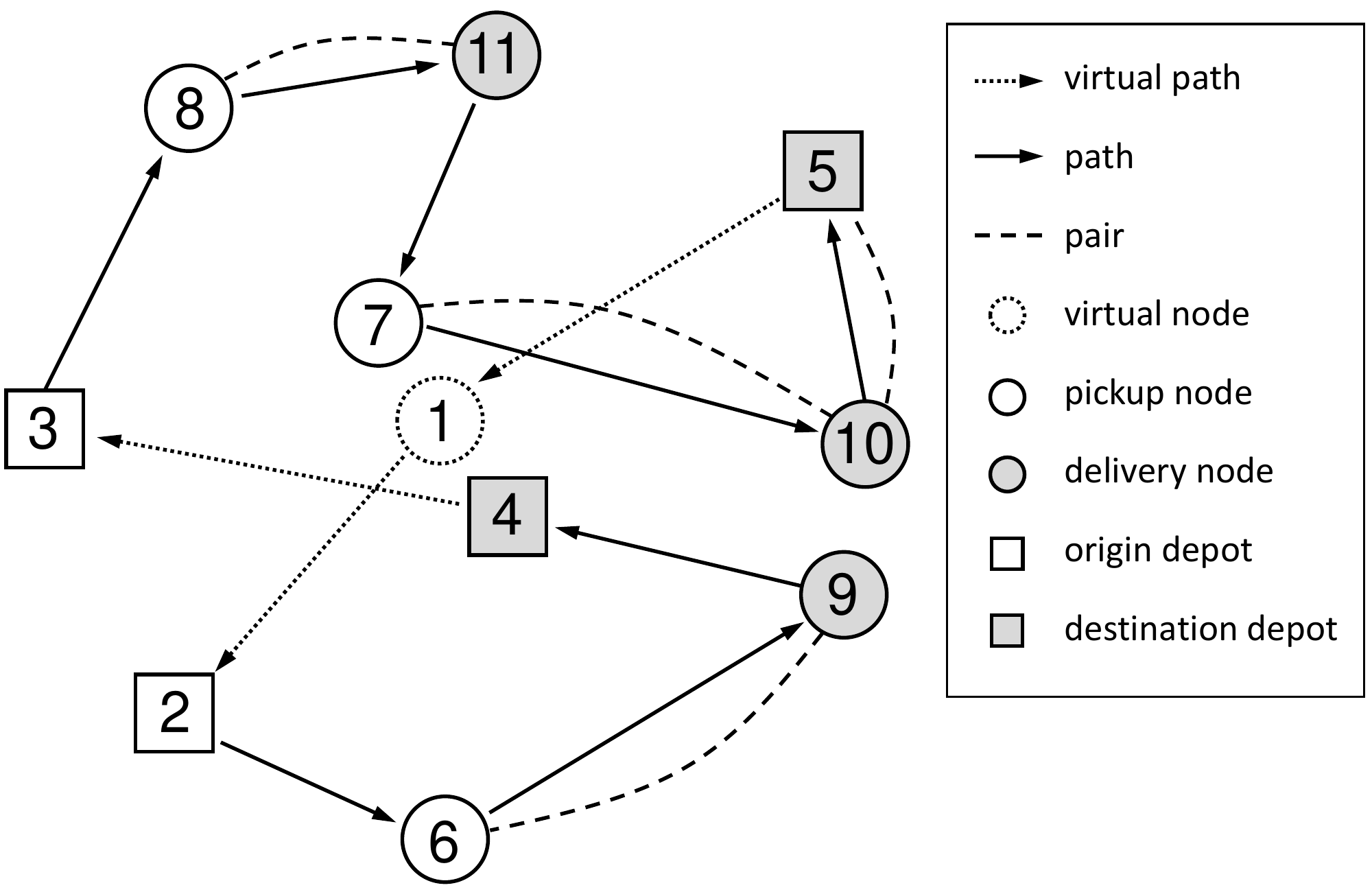}
% 	\includegraphics[width=2.4in]{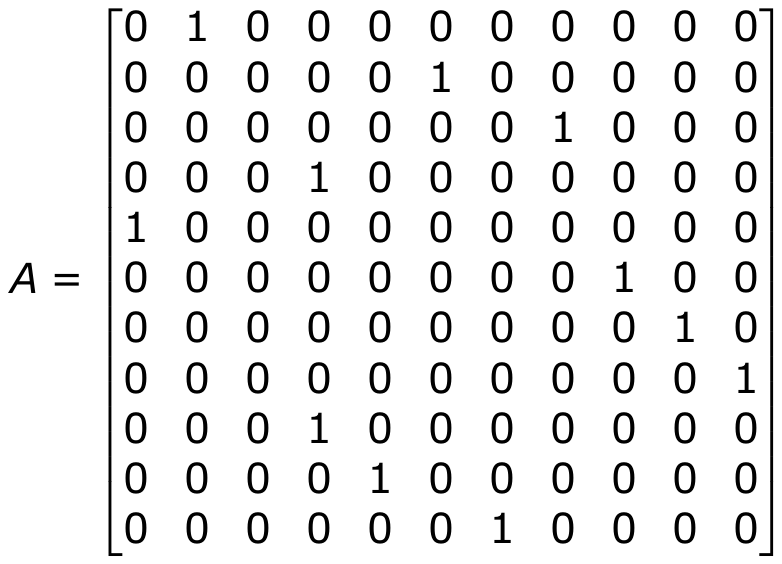}
% 	\caption{An example of $G^0$ with $n=3$, $k=2$.}
% 	\label{fig:fig1_1}
% \end{figure}
\begin{figure*}


    \centering
    \begin{subfigure}[b]{0.5\textwidth}
        \centering
        \includegraphics[width=2.6in]{figure/newfig11.pdf}
        \caption{}
    \end{subfigure}%
    ~ 
    \begin{subfigure}[b]{0.5\textwidth}
        \centering
        \includegraphics[width=2.4in]{figure/newfig12.pdf}
        \caption{}
    \end{subfigure}
    \caption{An illustration of the reference cycle graph $G^0$ (a) and its associated adjacency matrix $A^0$ (b) for a reference tour, $\bm{\sigma}^0$, that is infeasible.}
    \label{fig:fig1_1}
\end{figure*}
\begin{figure*}


    \centering
    \begin{subfigure}[b]{0.5\textwidth}
        \centering
        \includegraphics[width=2.6in]{figure/newfig21.pdf}
        \caption{}
    \end{subfigure}%
    \begin{subfigure}[b]{0.5\textwidth}
        \centering
        \includegraphics[width=2.4in]{figure/newfig22.pdf}
        \caption{}
    \end{subfigure}%    
    \caption{An illustration of the cycle graph $G$ (a) and its associated adjacency matrix $A$ (b) for a feasible tour.}
    \label{fig:fig1_2}
\end{figure*}
% \setcounter{MaxMatrixCols}{20}
% \[
% A^0=
% \begin{bmatrix}
%     0 & 1 & 0 & 0 & 0 & 0 & 0 & 0 & 0 & 0 & 0  \\
%     0 & 0 & 0 & 0 & 0 & 1 & 0 & 0 & 0 & 0 & 0  \\
%     0 & 0 & 0 & 0 & 0 & 0 & 0 & 1 & 0 & 0 & 0  \\
%     0 & 0 & 1 & 0 & 0 & 0 & 0 & 0 & 0 & 0 & 0  \\
%     1 & 0 & 0 & 0 & 0 & 0 & 0 & 0 & 0 & 0 & 0  \\
%     0 & 0 & 0 & 0 & 0 & 0 & 0 & 0 & 1 & 0 & 0  \\
%     0 & 0 & 0 & 0 & 0 & 0 & 0 & 0 & 0 & 1 & 0  \\
%     0 & 0 & 0 & 0 & 0 & 0 & 0 & 0 & 0 & 0 & 1  \\
%     0 & 0 & 0 & 1 & 0 & 0 & 0 & 0 & 0 & 0 & 0  \\
%     0 & 0 & 0 & 0 & 1 & 0 & 0 & 0 & 0 & 0 & 0  \\
%     0 & 0 & 0 & 0 & 0 & 0 & 1 & 0 & 0 & 0 & 0 
% \end{bmatrix}
% \]

% Though the tours depicted in Figs. \ref{fig:fig1_1} and \ref{fig:fig1_2} seem unrelated, one can use the properties of permutation matrices to show that there is a permutation matrix $P$ such that $\sigma = P \sigma^0$ for any two cycles that are defined over a same vertex set $V$---represented as vector forms, $\sigma^0$ and $\sigma$ \ram{cite a particular theorem in a textbook please}.
Though the tours depicted in Figs. \ref{fig:fig1_1} and \ref{fig:fig1_2} seem unrelated, we next discuss the relationship between $A^0$ and an arbitrary adjacency matrix $A$ corresponding to a tour defined over an identical set of vertices using permutation matrices.
% Consider an arbitrary cycle graph $G$, which are defined over the same vertex set of $G^0$, Then the following is true:
% we represent adjacency matrix of an arbitrary cycle graph whose nodes are permuted from $G^0$ with $A^0$ and a permutation matrix $P$.
\begin{proposition}
	Let $G^0$ is a cycle graph with adjacency matrix $A^0$.
	Let $G$ be a cycle graph defined over the same vertex set with adjacency matrix $A$. 
	Then there is a matrix $X \in {\cal P}$ such that:
	\begin{equation}
	A = X^{\top}A^0 X.
	\end{equation}
	\label{prop1}
\end{proposition}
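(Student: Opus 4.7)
The plan is to exhibit the permutation matrix $X$ explicitly by constructing a vertex relabeling that sends the tour of $G$ to the tour of $G^0$. Because both $G$ and $G^0$ are cycle graphs on the same $v$-element vertex set $V$, each is a directed Hamiltonian cycle: $G^0$ traverses the vertices in the order $l^0_1,l^0_2,\dots,l^0_v,l^0_1$ prescribed by the reference tour $\bm{\sigma}^0$, and $G$ traverses them in some order $l_1,l_2,\dots,l_v,l_1$ corresponding to the tour $\bm{\sigma}$.

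First, I would define the bijection $\pi:\{1,\dots,v\}\to\{1,\dots,v\}$ so that the $i$-th vertex visited by $G$ coincides with the $\pi(i)$-th vertex visited by $G^0$; equivalently, $\pi$ is the unique permutation satisfying $l_i = l^0_{\pi(i)}$ for every $i$ (any starting alignment works, since cyclically shifting does not change the cycle). Then I would let $X$ be the permutation matrix whose $(i,j)$ entry equals $1$ exactly when $j=\pi(i)$. With this convention, a direct computation gives
\begin{equation}
(X^{\top} A^0 X)_{ij} \;=\; \sum_{k,l} X_{ki} A^0_{kl} X_{lj} \;=\; A^0_{\pi(i),\pi(j)},
\end{equation}
so I only need to verify that $A_{ij}=A^0_{\pi(i),\pi(j)}$ for every pair $(i,j)$.

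Next, I would check this equality by cases. Since $G$ is the directed cycle $l_1\to l_2\to\cdots\to l_v\to l_1$, the entry $A_{ij}$ is $1$ precisely when $(i,j)$ is a consecutive pair in this cyclic order. By construction of $\pi$, consecutive pairs in $G$ are mapped to consecutive pairs in $G^0$, and these are exactly the pairs at which $A^0$ has a $1$ (the super-diagonal together with the lower-left corner that closes the cycle). Hence $A_{ij}=1 \iff A^0_{\pi(i),\pi(j)}=1$, giving the desired identity $A = X^{\top} A^0 X$.

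I do not expect any genuine obstacle here: the statement is really the observation that two directed Hamiltonian cycles on the same vertex set are isomorphic as labeled graphs up to a relabeling by a permutation, and the conjugation identity $A = X^{\top} A^0 X$ is the standard way that a relabeling acts on adjacency matrices. The only point requiring care is bookkeeping the convention for $X$ (row-permutation versus column-permutation) so that the two $X$'s on either side of $A^0$ combine to produce $A^0_{\pi(i),\pi(j)}$ rather than its transpose, which matters because $G^0$ and $G$ are directed and $A^0$ is not symmetric.
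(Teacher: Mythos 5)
Your argument is correct and is essentially the paper's proof made explicit: the paper invokes Proposition \ref{isograph} (any two cycle graphs on the same vertex set are isomorphic) together with the standard fact that graph isomorphism is equivalent to conjugation of adjacency matrices by a permutation matrix, whereas you construct that isomorphism $\pi$ from the two tours and verify the conjugation identity directly. The only blemish is the convention slip you already flag: with $X_{ij}=1$ iff $j=\pi(i)$, the computation actually yields $(X^{\top}A^0X)_{ij}=A^0_{\pi^{-1}(i),\pi^{-1}(j)}$, so you should either transpose the definition of $X$ or replace $\pi$ by $\pi^{-1}$ (and keep the tour-position versus vertex-label indexing consistent), after which the case check goes through unchanged.
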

\begin{proof}[Proof of Proposition \ref{prop1}]
 By Proposition \ref{isograph}, any two cycles graphs defined over the same vertex set are \emph{isomorphic}. 
 Thus, $G^0$, and $G$, are isomorphic. 
 $G^0$ is isomorphic to $G$ if and only if there exists a permutation matrix $X$ such that $A = X^{\top}A^0 X$ \cite[p.520-521]{turner1968generalized}.
 \end{proof}
% to obtain an adjacency matrix of the 
%Two graphs G (V, E) and G (V, E) are said to be isomorphic if there is a bijection :V such that (v, v) E if and only if ((v), (v)) E.

%If $G_1$ is defined such that $\psi_1$ is an identity map, then for all $i \in I$,
%\[
%v_i = \psi_2^{-1} u_i
%\]
%We are interested in finding the permutation matrix.
%...

\section{Convex Integer Quadratic Programming Problem Formulation}
\label{sec4}

This section describes our Integer Quadratic Programming (IQP) formulation to solve the MVPDP.
We begin by first formulating each of the constraints required to describe the MVPDP problem using permutation matrices.
We subsequently describe how to represent the cost function using permutation matrices and a technique to render any non-convex cost function as an equivalent convex cost function.
The section concludes by describing the IQP formulation of the MVPDP.
Note again, that for ease of presentation, in the remainder of the paper, we use the index $1,\ldots,v$ to refer to the nodes from $O$, then $K_O$, then $K_D$, then $P$, and finally $D$ in order.

\subsection{MVPDP Linear Constraint Formulation}

% For a given set, an inequality is said to be valid on a set if it is satisfied by every point in this set. 
% In a similar manner, a set of inequalities are valid, if they are satisfied by every point in the feasible region for the problem.
To simplify the formulation of the constraints, we introduce additional notation:
Let $\bm{e}_i$ be $i$th column of $v\times v $ identity matrix, i.e., $i$th standard basis in $\mathbb{R}^{v}$, $T$ be a $v \times v$ upper triangular matrix where its $i$th row is:
$
[\underbrace{0,\dots,0}_{i-1 \textup{ times}},\underbrace{1,\dots,1}_{v-i+1 \textup{ times}}]
$ for each $i \in V$, 
%\[
%t_j = 
%\begin{cases}
%1, & \textup{if } j \in \lbrace 1,\dots, v-i+1 \rbrace, \\
%0, & \textup{otherwise},
%\end{cases}
%\]
and let $p$ be the $v \times 1$ column vector defined by $
\bm{p}:= [\underbrace{0,\dots,0}_{2k+1 \textup{ times}},\,\underbrace{p_1,\dots,p_n}_{n \textup{ times}},\underbrace{-p_1,\dots,-p_n}_{n \textup{ times}}]^{\top},$ where $p_i$ is the number of customers that are associated with customer demand $i$.
Finally let $\bm{n}:= [1,\,2,\,\dots,\,v]^{\top}$.
% Let $B_i,\,e_i,\,l_i,\,d_i \in \mathbb{R}$ be the time variable, earliest time for pickup/delivery, latest time for pickup/delivery, service time, respectively for each node $i \in V$. And let $L$ be the maximum duration of trip for each customer.

%\subsection{Non-homogeneous customers for vehicles}
%\subsection{Additional valid constraints}
%association constraint, max. trip duration constraint, time-windows constraint

%The constraint needs big $M$ constraint unfortunately.
%\begin{proposition}
%	If $X$ is a solution, then for each $i$, $t_i -t_j \geq 0$ implies $t_i + t_j \leq 0$
%\end{proposition}
%The proposition is immediate from the contrapositive of the following statement: If $t_i <= t_c$ and $t_i -t_c<=-1$ then $t_i - t_j<= -1$.
%For all $i \in P$
%\[
%B_{i+n} - B_{i} + d_i \leq L_i
%\]
%For all $i \in P_V$
%\[
%B_{i+n} - B_{i} \leq T_i
%\]
%Define $Y \in \mathbb{R}^{v\times v\times v}$
%\[
%Y := X^{\top}A^0 X
%\]
\textit{Path Constraint}: 
First, note that $X$ is a permutation matrix if and only if each of its row and column sums is one and all the elements are either $0$ or $1$.
To enforce this constraint, each element of $X$ must be either $0$ or $1$ and 
\begin{equation}
X \bm{1} = \bm{1} \quad \text{and} \quad X^{\top} \bm{1} = \bm{1}.
\end{equation}
Note that since our optimization problem uses permutation matrices as its decision variables, it does not require sub-tour elimination constraints. 
This is because for any given permutation matrix $X$, and a reference cycle, $\bm{\sigma}^0$, a tour generated by the permutation matrix $X$ represented as $\bm{\sigma} = X \bm{\sigma}^0 $ is necessarily a \emph{single} cycle, which does not contain any sub-tours.

\textit{Vehicle Precedence}: 
%Recalling from the Table \ref{table1}, this is one of the advantages of our formulation over the existing methods in that there are at least O($n^2$) number of constraints due to the sub-tour elimination constraints 
%We note that ILP requires slack variables for the linear constraints such that the the number of decision variables is larger than that found in our methods.
The following proposition summarizes a set of constraints that enforces the vehicle precedence constraint:
\begin{proposition}[Vehicle Precedence]
\label{prop41}
Suppose $X \in {\cal P}$ satisfies the following inequalities:
\begin{align}
&\bm{n}^{\top}X(\bm{e}_{1+i}-\bm{e}_{1+i+k}) \leq -\bm{1},\,\;\textup{for }i = 1,\dots,k \label{eq1},\\
&\bm{n}^{\top}X(\bm{e}_{1+k+i}-\bm{e}_{2+i}) = -\bm{1},\,\;\textup{for }i = 1,\dots,k-1 \label{eq2},
\end{align}
then $X\bm{\sigma}^0$ is a tour wherein each vehicle's origin is preceded by its destination and each vehicle's destination immediately precedes the origin of the following vehicle. 
\end{proposition}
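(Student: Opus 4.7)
The plan is to interpret the scalar $\bm{n}^\top X \bm{e}_j$ as the position of node $j$ in the permuted tour represented by $X$, and then read off (\ref{eq1})--(\ref{eq2}) as direct statements about these positions. First I would argue that, since $X \in \mathcal{P}$ and $A = X^\top A^0 X$ by Proposition \ref{prop1}, for each node $j \in V$ there is a unique row index $\phi(j) \in \{1, \ldots, v\}$ with $X_{\phi(j), j} = 1$, and $\phi$ is a bijection on $V$. Because $A^0$ is the upper $1$-shift matrix of the reference cycle and $(X^\top A^0 X)_{uv} = A^0_{\phi(u), \phi(v)}$, the identity $A_{uv} = 1 \iff \phi(v) = \phi(u) + 1 \pmod{v}$ shows that $\phi(j)$ is precisely the position at which node $j$ is visited along the cycle encoded by $X$.

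A one-line computation $\bm{n}^\top X \bm{e}_j = \sum_{i=1}^{v} i \cdot X_{ij} = \phi(j)$ then identifies the left-hand sides of (\ref{eq1}) and (\ref{eq2}) with differences of tour positions. Using the node-labelling convention introduced above---origin of vehicle $i$ is node $1+i$ and destination of vehicle $i$ is node $1+k+i$ for $i = 1, \ldots, k$---inequality (\ref{eq1}) becomes $\phi(1+i) - \phi(1+k+i) \leq -1$, which, since $\phi$ is integer-valued, is equivalent to $\phi(1+i) < \phi(1+k+i)$; that is, the origin of vehicle $i$ is visited strictly before its destination.

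Equality (\ref{eq2}) similarly becomes $\phi(1+k+i) + 1 = \phi(2+i)$ for $i = 1, \ldots, k-1$, which says that the destination of vehicle $i$ sits at the position immediately before the origin of vehicle $i+1$. Combining the two families yields the precedence structure claimed in the proposition. The main point requiring care is the very first step: pinning down the interpretation of $\bm{n}^\top X \bm{e}_j$. The relation $A = X^\top A^0 X$ leaves the encoding of $X$ free, so one must verify that the encoding $X_{ij} = 1 \iff \phi(j) = i$ is consistent with the paper's convention that the columns of $X$ are indexed by node labels from $V$ rather than, say, by positions in $\bm{\sigma}^0$. Once that indexing is fixed, the remaining steps are immediate.
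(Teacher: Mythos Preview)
Your proposal is correct and, in fact, considerably more detailed than the paper's own treatment: the paper simply states that ``the proof of the proposition follows by observation'' and points to Figure~\ref{fig:fig12}. Your argument makes that observation precise by identifying $\bm{n}^{\top}X\bm{e}_j$ with the tour position $\phi(j)$ via the adjacency relation $A=X^{\top}A^0X$ of Proposition~\ref{prop1}, which is exactly the right way to justify the claim rigorously; the remaining translation of \eqref{eq1}--\eqref{eq2} into position inequalities is then immediate, as you note.
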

The proof of the proposition follows by observation.
Fig. \ref{fig:fig12} illustrates a tour that satisfies the constraints specified by \eqref{eq1} and \eqref{eq2}.
The dashed line from Fig. \ref{fig:fig12} indicates any possible length-m path between the $i$th vehicle's origin node and its destination node, where $m \geq 1$, and the solid line indicates that the $(i+1)$th vehicle's origin depot immediately follows the $i$th vehicle's destination node. 
Notice that by enforcing \eqref{eq1} and \eqref{eq2} and ensuring that all other nodes are placed and ordered between one of the vehicle pickup and delivery depot node pairs, a single cycle that begins and ends at the virtual node can be constructed.
\begin{figure}[h]
\centering
    \begin{subfigure}[b]{0.5\textwidth}
	\centering
	\includegraphics[width=2.7in]{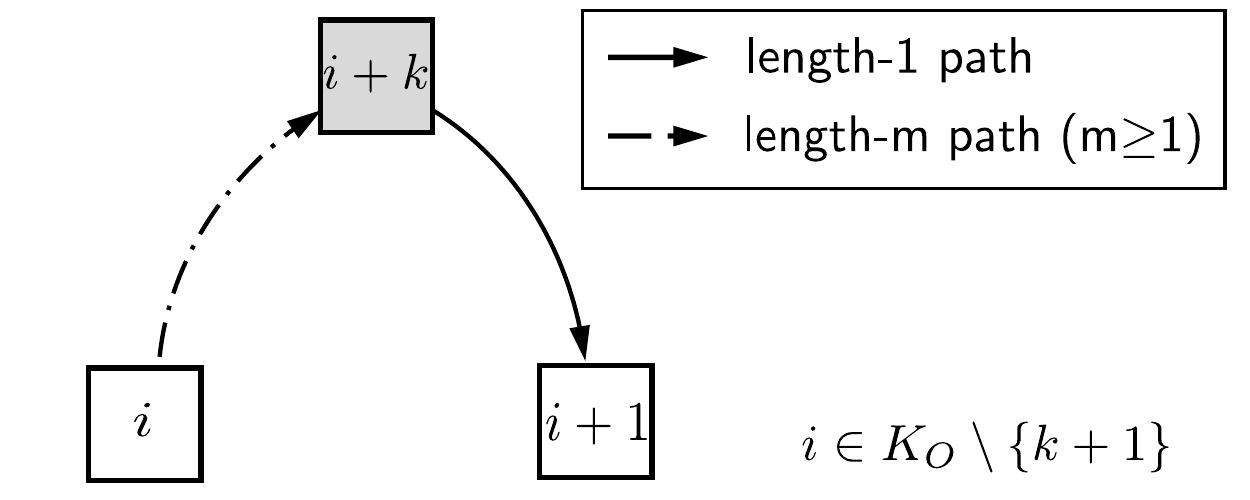}
	\caption{}
	\label{fig:fig12}
    \end{subfigure}%
    \begin{subfigure}[b]{0.5\textwidth}
	\centering
	\includegraphics[width=2.7in]{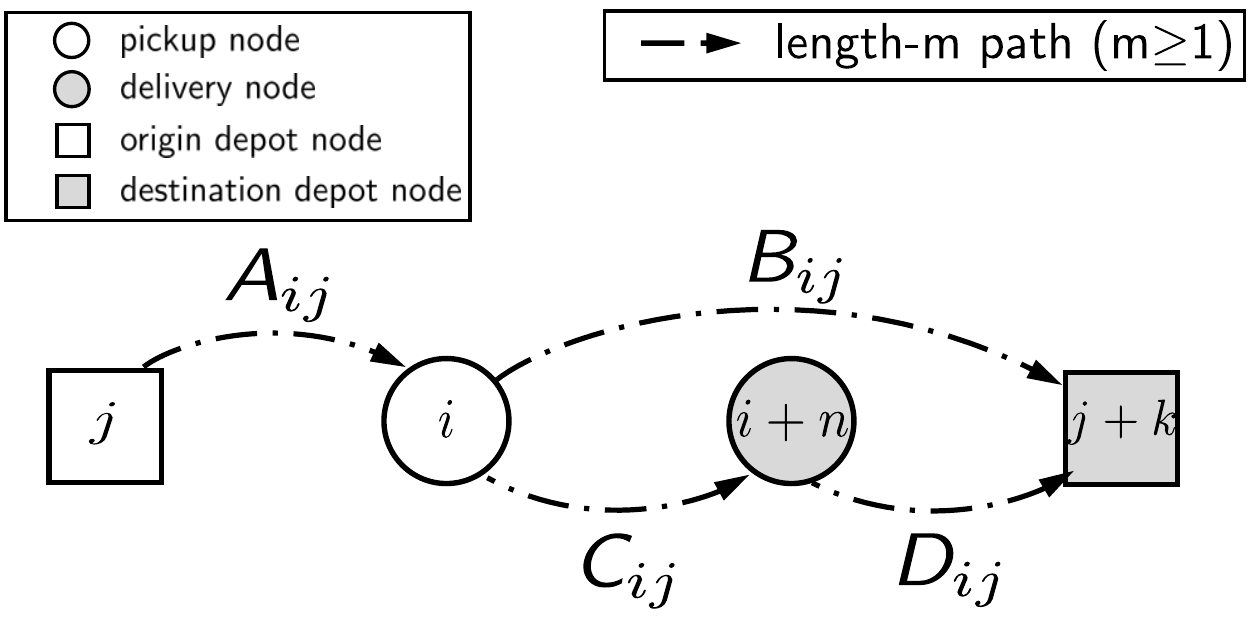}
	\caption{}
	\label{fig:figstupidmistake}
    \end{subfigure}%
    \label{fig:fig123}
    \caption{Illustrative explanations of the equality and inequality constraints for (a) the vehicle precedence, and (b) customer precedence.}
\end{figure}

\textit{Association and Customer Precedence}
The following proposition proposition summarizes a set of constraints related to vehicle-customer association and customer precedence:
\begin{proposition}
Suppose $X \in {\cal P}$ satisfies \eqref{eq1}-\eqref{eq2} and that whenever:
\begin{equation}
\underbrace{(\bm{n}^{\top}X(\bm{e}_{1+j}-\bm{e}_{2k+1+i}) \leq -\bm{1})}_{\textsf{A}_{ij}} \wedge \underbrace{(\bm{n}^{\top}X(\bm{e}_{2k+1+i}-\bm{e}_{1+k+j}) \leq -\bm{1}}_{\textsf{B}_{ij}}) \label{eq3a}
\end{equation}
is satisfied that the following inequality is also satisfied:
\begin{equation}
\underbrace{(\bm{n}^{\top}X(\bm{e}_{2k+1+i}-\bm{e}_{2k+1+i+n}) \leq -\bm{1})}_{\textsf{C}_{ij}} \wedge \underbrace{(\bm{n}^{\top}X(\bm{e}_{2k+1+i+n}-\bm{e}_{1+k+j}) \leq -\bm{1}}_{\textsf{D}_{ij}}\label{eq3b}
\end{equation}
for each $j \in \lbrace 1,\dots,k \rbrace$, $i \in \lbrace 1,\dots,n \rbrace$ where `$\wedge$' means `and.'
Then $X \bm{\sigma}^0$ is a tour in which each customer's delivery is preceded by their pickup and carried out by the car that picked them up.
\label{prop51}
\end{proposition}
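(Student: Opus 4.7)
Proof plan: My strategy is to translate each inequality of the form $\bm{n}^{\top}X(\bm{e}_a-\bm{e}_b)\leq-\bm{1}$ into the statement that node $a$ strictly precedes node $b$ in the permuted tour $X\bm{\sigma}^0$, and then combine the vehicle block structure of Proposition~\ref{prop41} with the hypothesized implication.

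First, I invoke Proposition~\ref{prop41}: the constraints \eqref{eq1}--\eqref{eq2} force the tour to be a concatenation of $k$ consecutive vehicle blocks. Writing $p_j$ and $q_j$ for the positions of vehicle $j$'s origin $1+j$ and destination $1+k+j$, one has $p_j<q_j$ from \eqref{eq1} and $p_{j+1}=q_j+1$ from \eqref{eq2}, so the open position intervals $(p_j,q_j)$ for different vehicles are pairwise disjoint and lie back-to-back with no gap between consecutive blocks.

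Next, fix a customer $i$ and let $s_i$ and $t_i$ denote the tour positions of the pickup node $2k+1+i$ and the delivery node $2k+1+i+n$, respectively. The conjunction $\textsf{A}_{ij}\wedge\textsf{B}_{ij}$ is then equivalent to $p_j<s_i<q_j$, i.e., the pickup lies strictly inside vehicle $j$'s block; by the disjointness established above this can hold for at most one index $j^\star$. Assuming such a $j^\star$ exists, applying the hypothesis of the proposition produces $\textsf{C}_{ij^\star}\wedge\textsf{D}_{ij^\star}$, that is, $s_i<t_i$ and $t_i<q_{j^\star}$. Chaining these with $p_{j^\star}<s_i$ yields the nested inequality $p_{j^\star}<s_i<t_i<q_{j^\star}$, which simultaneously delivers the precedence claim (pickup before delivery) and the association claim (pickup and delivery both lie in the same block $j^\star$ and are therefore serviced by the same vehicle).

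The step I expect to be the main obstacle is the existence of such a $j^\star$ for every customer, i.e., ruling out that some pickup position $s_i$ lands outside every interval $(p_j,q_j)$. Since consecutive blocks are adjacent by \eqref{eq2}, a stray pickup position could only occur before $p_1$ or after $q_k$, and ruling this out depends on where the virtual node $1$ (and hence the "free" positions) is placed by the permutation matrix $X$. Once this boundary case is pinned down (using that $X$ is a permutation together with the remaining path constraints imposing a single cycle through all vertices), the short chain of inequalities displayed above closes the argument for each customer, and the proof is complete.
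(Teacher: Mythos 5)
Your argument is essentially the paper's argument made precise: the paper's proof is a one-sentence appeal to Fig.~\ref{fig:figstupidmistake}, asserting that $\textsf{A}_{ij}\wedge\textsf{B}_{ij}$ holds if and only if customer $i$ is picked up by vehicle $j$, and that the implied $\textsf{C}_{ij}\wedge\textsf{D}_{ij}$ then expresses delivery by that same vehicle. Your translation of each inequality into a strict precedence of tour positions, the back-to-back block structure extracted from Proposition~\ref{prop41}, and the chain $p_{j^\star}<s_i<t_i<q_{j^\star}$ is exactly that observation written out, and that part is correct.

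The point you flag --- existence of $j^\star$ for every customer --- is a genuine issue, but your proposed resolution does not work: nothing in \eqref{eq1}--\eqref{eq2}, the permutation (degree) constraints, or the conditional constraints prevents a pickup position $s_i$ from landing before $p_1$ or after $q_k$ (for instance adjacent to the virtual node, whose position is itself unconstrained); in that case \eqref{eq3a} fails for every $j$, the implication is vacuous, and neither precedence nor association is forced for that customer. The paper does not close this gap either: its proof simply asserts the ``iff,'' and the surrounding discussion of Proposition~\ref{prop41} only remarks that one must additionally ensure all non-depot nodes are placed between some origin--destination depot pair, without encoding that as a constraint. So, read literally, the hypotheses only yield the conclusion for customers whose pickup falls inside some vehicle block; under that (evidently intended) reading your proof is complete and strictly more careful than the paper's. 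To make the statement hold unconditionally you would need an additional constraint forcing each pickup position into the interval $[p_1,q_k]$ spanned by the vehicle blocks, rather than an appeal to the permutation structure alone.
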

\begin{proof}
As can be seen from Fig \ref{fig:figstupidmistake}, if all the vehicle precedent constraints, \eqref{eq1}-\eqref{eq2}, are satisfied, then for each $i,j$, both the inequalities $\textsf{\textit{A}}_{ij}$ and ${\textit{B}}_{ij}$ hold, if and only if $i$th customer was pickup by $j$th vehicle, which implies that $i$th customer will be delivered by the $j$th vehicle, which is expressed by the two inequalities $\textsf{\textit{C}}_{ij}$ and $\textsf{\textit{D}}_{ij}$.
\end{proof}

To enforce a conditional constraint such as \eqref{eq3a} implies \eqref{eq3b}, we introduce an auxiliary binary variable and use the Big-M method \cite{winston2003introduction}.
For instance, consider three binary variables $x_1,x_2, y \in \lbrace 0,1\rbrace$. 
The conditional statement $a_1x_1 \geq b_1 \Rightarrow a_2x_2 \geq b_2$, where $\Rightarrow$ means `implies,' can be represented as: 
\begin{align}
    & a_1x_1 < b_1 + My, \label{eqfst} \\
    & a_2x_2 \geq b_2 - M(1-y),\nonumber \\
    & y_1 \in \lbrace 0,1\rbrace \nonumber
\end{align}
for a large constant $M$. 
Notice that the strict inequality term  \eqref{eqfst} can be replaced with $a_1x_1 \leq b_1-1 + My$, if $a_1$, $b_1$ are both integers.

%One can use Big-M method to linearlize the above conditional constraints.
%(?) is equivalent to, either
%\[
%\left((\bm{n}^{\top}X(\bm{e}_{1+j}-\bm{e}_{2k+1+i}) \geq 0) \lor (\bm{n}^{\top}X(\bm{e}_{2k+1+i}-\bm{e}_{1+k+j}) \geq 0)\right)
%\]
%or
%\[
%\left((\bm{n}^{\top}X(\bm{e}_{2k+1+i}-\bm{e}_{2k+1+i+n}) \leq -1) \wedge (\bm{n}^{\top}X(\bm{e}_{2k+1+i+n}-\bm{e}_{1+k+j}) \leq -1)\right)
%\]
%or both, which is again equivalent to:
%\begin{align*}
%&\bm{n}^{\top}X(\bm{e}_{1+j}-\bm{e}_{2k+1+i}) \geq 0 - My_1 - My_2 \\
%&\bm{n}^{\top}X(\bm{e}_{2k+1+i}-\bm{e}_{1+k+j}) \geq 0 - M(1-y_1) -My_2 \\
%&\bm{n}^{\top}X(\bm{e}_{2k+1+i}-\bm{e}_{2k+1+i+n}) \leq -1 + M(1-y_2) \\
%&\bm{n}^{\top}X(\bm{e}_{2k+1+i+n}-\bm{e}_{1+k+j}) \leq -1 + M(1-y_2) 
%\end{align*}
%where $y_1,y_2 \in \lbrace 0,1\rbrace$.
%

\textit{Capacity of each vehicle}: 
The following proposition proposition summarizes a constraint to enforce vehicle capacity:
\begin{proposition}
Suppose $X \in {\cal P}$ satisfies Proposition \ref{prop51} and the following inequality:
\begin{equation}
T X\bm{p}\leq q1.
\end{equation}
Then $X\bm{\sigma}^0$ is a tour that satisfies the vehicle capacity constraint at all times. 
\label{prop31}
\end{proposition}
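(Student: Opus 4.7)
The plan is to interpret each entry of $T X \bm{p}$ as the running load of the currently active vehicle at one position of the tour $X\bm{\sigma}^0$, so that the inequality $T X \bm{p} \leq q \bm{1}$ reads pointwise as the capacity constraint. Since $\bm{p}$ stores signed customer counts---$+p_i$ at pickup node $i$, $-p_i$ at the paired delivery node, and zero at every depot and at the virtual node---the product $X \bm{p}$ rearranges these counts into the order in which they are encountered along $X \bm{\sigma}^0$, and the rows of $T$ then accumulate partial sums of them.

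First, I would invoke Proposition \ref{prop51} (and hence Proposition \ref{prop41}) to fix the structure of $X\bm{\sigma}^0$: the tour splits, in order, into the virtual node followed by $k$ contiguous vehicle segments, where segment $j$ begins at vehicle $j$'s origin depot, ends at its destination depot, and contains exactly the pickup and delivery nodes of the customers served by vehicle $j$, with every pickup strictly preceding its matching delivery. Two consequences will be used below: the signed demands inside any single segment cancel to zero, so the running load is zero at the start and end of every segment; and the $+p_i$ contribution of any served customer $i$ precedes its $-p_i$ contribution, so the load stays nonnegative throughout.

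Next, I would perform the main algebraic identification: that for every index $i$, the entry $(T X \bm{p})_i$ equals the load carried by the currently active vehicle at the tour position corresponding to row $i$. This rests on two ingredients. The first is the identity $\bm{1}^\top \bm{p} = 0$, which allows the partial sums built by $T$ to be converted between prefix and suffix form without residual terms. The second is the segment structure above, which guarantees that the load resets to zero at every inter-segment boundary, so that the global cumulative signed demand agrees with the currently active vehicle's own load at every position. Once the identification is made, the stated inequality reads exactly as ``vehicle load at most $q$ at every position,'' giving the capacity conclusion. The main obstacle is precisely this identification: the cyclic convention $\bm{\sigma}^0 = [l_2^0,\dots,l_v^0,l_1^0]^\top$ introduces a one-position shift between the slot indices that appear in $T$, $X$, and $\bm{p}$ and the physical order in which nodes are visited, and the entire argument hinges on tracking that shift carefully so that the matrix-level accumulation by $T$ lines up with the physical accumulation of load. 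Everything else---nonnegativity, boundary consistency, and the final capacity conclusion---then follows directly from the constraints already imposed by Propositions \ref{prop41} and \ref{prop51}.
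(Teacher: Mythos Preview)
Your proposal is correct and follows essentially the same approach as the paper: interpret $X\bm{p}$ as the signed demand increments in tour order, use $T$ to form cumulative sums, and invoke Propositions \ref{prop41}--\ref{prop51} to argue that the load resets to zero at every depot boundary so that the $i$th entry of $TX\bm{p}$ is the current vehicle occupancy at position $i$. The paper dispatches this in two sentences as ``follows by observation''; your version is simply a more careful unpacking of the same idea, including the cyclic-shift bookkeeping that the paper leaves implicit.
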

The proof of Proposition \ref{prop31} follows by observation.
We note that satisfying the constraints in Proposition \ref{prop41} and \ref{prop51} ensures that at every vehicle depot, the occupancy of the vehicle is reset to $0$.
Then, $TX\bm{p}$ is a vector whose $i$th component corresponds to the number of customers in a vehicle at $i$th position of the tour, which should be at most $q$ for all $i \in V$.

\subsection{MVPDP Convex Quadratic Cost Function}

For a given cycle graph $G$ and its associated adjacency matrix $A$ and its cost matrix $C$, the total tour cost is computed by
\begin{equation}
\textup{trace}(C^{\top}A) = \bm{1}^{\top}C \circ A \bm{1},
\label{eqcost}
\end{equation}
where $\circ$ is a \emph{Hadamard} product operator which is used for element-wise product of matrices, and $\bm{1}$ is a $v\times 1$ vector with ones.
Since our decision variable in our optimization problem is a permutation matrix, the cost function as a result of Proposition \ref{prop1} for some permutation matrix $X$ becomes:
\begin{equation}
    \textup{trace}(C^{\top}A) = \textup{trace}(C^{\top}X^T A^0 X ),
\end{equation}
which may be a non-convex function of $X$. 
A recent survey has shown that optimization solvers for a variety of integer nonlinear programs (including IQPs) are much easier to solve if they are convex when the integer variables are relaxed \cite{burer2012non}. 
In fact, the computation time for these integer programs that are convex when the integer variables are relaxed has been shown to be much faster than their non-convex counterparts in various studies \cite{fletcher1998numerical,bonami2008algorithmic,bonami2012algorithms,takapoui2017simple}. 
% Recent papers have even established effective heurisitic algorithms for convex MIQP 
% Also, it worth mentioning the recent work of Takapoui et al., \cite{takapoui2017simple}, who have presented an effective heuristic algorithm which can solve a convex MIQP, approximately.
 
The method we use to \emph{convexify} the original non-convex program is by adding a constant term to the original cost function \eqref{eqcost} and formulate a convex program and subtracting the term afterwords.
% The method has appeared multiple time in the literature (a few of the examples are \cite{billionnet2012extending,bliek1u2014solving}).
First, we note by the following proposition that the value of \eqref{eqcost} does not depend on the diagonal entries of $C$.
\begin{proposition}
For a given directed graph, consider an adjacency matrix without a self-loop, $A$, and a square matrix $C$.
For each real diagonal matrix $D$ of the same size as $C$, the following is true:
\begin{equation}
	\textup{trace}((C+D)^{\top}A)  = 	\textup{trace}(C^{\top}A).
\end{equation}
	\label{prop0}
\end{proposition}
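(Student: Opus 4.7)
The plan is to use the linearity of the trace operator to split the left-hand side and then observe that the extra piece vanishes because the diagonal of $A$ is zero. Concretely, I would begin by writing
\begin{equation}
\textup{trace}((C+D)^{\top}A) = \textup{trace}(C^{\top}A) + \textup{trace}(D^{\top}A),
\end{equation}
so the proposition reduces to showing $\textup{trace}(D^{\top}A)=0$.

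Next, I would expand the remaining trace entrywise. Writing $D=\textup{diag}(d_1,\ldots,d_v)$, we have $(D^{\top}A)_{ii}=\sum_j D_{ji}A_{ji}$, and since $D$ is diagonal the only surviving term is $j=i$, giving $(D^{\top}A)_{ii}=d_i A_{ii}$. Summing over $i$ yields $\textup{trace}(D^{\top}A)=\sum_i d_i A_{ii}$.

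Finally, I would invoke the hypothesis that $A$ is the adjacency matrix of a graph \emph{without self-loops}, which means $A_{ii}=0$ for every $i\in V$. Therefore $\textup{trace}(D^{\top}A)=0$, and the claim follows. There is no real obstacle here; the only thing to be careful about is being explicit that the \textquotedblleft no self-loops\textquotedblright{} assumption is precisely what kills the cross term, so the proof does in fact use every hypothesis in the statement.
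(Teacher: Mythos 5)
Your proof is correct and follows essentially the same route as the paper: split by linearity of the trace and note that $\textup{trace}(D^{\top}A)=\sum_i d_i A_{ii}=0$ because $A$ has no self-loops. You simply spell out the entrywise computation that the paper leaves implicit.
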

\begin{proof}
The proof is immediate from the fact that for the given graph without self-loop, its adjacency matrix, $A$ has only zeros on its diagonal so:
\begin{equation}
\textup{trace}((C+D)^{\top}A) = \textup{trace}(C^{\top}A) + \cancelto{0}{\textup{trace}(D^{\top}A)}.
\end{equation}
\end{proof}
Thus, we may construct the cost matrix $C$ with arbitrary diagonal entries without affecting the function value of \eqref{eqcost}.
Next, we reformulate the previous matrix representation of the cost, as a quadratic function in terms of $\bm{x}:= \text{vec}(X)$ which is a vectorization of $X$. 
Consider an adjacency matrix $A$ of an arbitrary cycle graph $G$. 
The cost incurred while completing a single cycle through $G$ can be represented as:
\begin{align}
    % \bm{1}^{\top}C \circ A \bm{1} &=\textup{trace}(C^{\top}A) & \nonumber \\
    \textup{trace}(C^{\top}A)&=\textup{trace}(C^{\top}X^{\top}A^0X) && \textup{by Proposition }   \ref{prop1} \\
    &=\textup{trace}(C^{\top}X^{\top}(A^0+\gamma I)X) - \gamma \textup{trace}(C^{\top} X^{\top}X) && \textup{by the linearity of the trace operator}  \\
    &=\textup{trace}(C^{\top}X^{\top}(A^0+\gamma I)X) - \gamma \textup{trace}(C) && \textup{by the properties of permutation matrices}  \label{lasteq}
\end{align}
where $\gamma \geq 0$. 
By using the last form of the tour cost \eqref{lasteq}, we define a cost functional, namely $L$, 
\begin{equation}
L: (X,\gamma) \mapsto \textup{trace}(C^{\top} X^{\top} (A^0+\gamma I) X)-\gamma\textup{trace}(C),
\end{equation}
where $\gamma \geq 0$.
Now let
\begin{equation}
Q := \frac{1}{2}H_{\bm{x}}^{L},
\end{equation}
where we used $H_{\bm{x}}^{L}$ to denoted the \emph{Hessian} of $L$ with respect to $\bm{x}$.
%i.e.,$\textup{vec}(X) = [x_{11},\dots,x_{v1},x_{12},\dots,x_{v2},\dots,x_{1v},\dots,x_{vv}]^{\top}.
Then, one can show:
\begin{equation}
Q= 
\underbrace{\begin{bmatrix}
	\gamma\overline{C} & C/2      & 0  & \cdots & 0 & C/2 \\
	C^{\top}/2   & \gamma\overline{C} &  C/2 & 0 & \cdots & 0\\
	0 & C^{\top}/2 & \ddots & \ddots & \ddots & \vdots \\
	\vdots& \ddots & \ddots & \ddots & C/2 & 0 \\
	0   & \cdots      & 0  &  C^{\top}/2   &\gamma\overline{C} & C/2  \\
	C^{\top}/2 & 0      & \cdots & 0 & C^{\top}/2 & \gamma\overline{C}
	\end{bmatrix}}_{v \textup{ blocks}}
	\label{qmat}
\end{equation}
where $\overline{C} = (C+C^{\top})/2$, and $0$ is the $v\times v$ matrix where all entries are zeroes.
%\[
%C:= 
%\underbracket{\begin{bmatrix}
%	\widetilde{C} & 0      & 0   \\
%	0   & \ddots &     \\
%	0   & 0      & \widetilde{C}            
%	\end{bmatrix}}_{k \textup{ blocks}}
%\]
Note $Q$ is symmetric.
% , and $Q_k$ is also symmetric.

Consider a $v^2 \times v^2$ real diagonal matrix $D = \textup{diag}(\bm{d})$ where $\bm{d}=[d_1,\dots,d_{v^2}]^{\top}$.
Using Proposition \ref{prop1} and the property of trace operator, the quadratic form of the cost can be defined as follows:
\begin{align}
   L(X,\gamma) &=\frac{1}{2}\bm{x}^{\top}H_{\bm{x}}^L \bm{x} - \gamma \textup{trace}(C) && \textup{since } \textup{trace}(C^{\top} X^{\top} (A^0+\gamma I) X)=\frac{1}{2}\bm{x}^{\top}H_{\bm{x}}^L\bm{x}  \\
    &=\bm{x}^{\top}(\underbrace{Q+D}_{\widetilde{Q}:=Q+D}) \bm{x}- \gamma \textup{trace}(C) - \bm{x}^{\top}D\bm{x} && \textup{since }Q=\frac{1}{2}H_{\bm{x}}^L  \\
    &=\bm{x}^{\top}\widetilde{Q}\bm{x} - (\gamma \textup{trace}(C) +  \bm{x}^{\top}D\bm{x})
    \\
    &=\bm{x}^{\top}\widetilde{Q}\bm{x} - \bm{x}^{\top}\bm{d} - \gamma \textup{trace}(C)   &&\textup{since }\bm{x}^{\top}D\bm{x} = \bm{x}^{\top}\bm{d}\textup{ for any }\bm{x} = \textup{vec}(X) \text{ with } X \in {\cal P}
    \label{eqmain}
\end{align}

The following lemma, which is proven in Appendix B, illustrates how to select $\gamma$ and $D$ to ensure that $\widetilde{Q}:= Q + D$ is positive semi-definite:
% where each diagonal element is chosen as:
% \[
% c_{ii}= 
% \sum_{j\in V:j\neq i} \left| c_{ij} \right| + \alpha,  \,\,\,\,\,\,\,\,\forall i \in V
% \]
% where $\alpha$ is a positive real number.
%Notice that the cost function $L(X)$ contains a constant term.

% We note here that by the \emph{Gershgorin's Disk Theorem} (see the Appendix A for the details), for a given arbitrary cost values $\lbrace c_{ij} \rbrace_{i,j \in V}$, $\alpha >0$ implies that $C \succ 0$. 
% Alternatively, for a given $X$, one can express the cost $L(X)$ as a quadratic form, so as to find conditions on $\gamma$ which guarantee that the program is convex. 
%Note that if both $\alpha > 0$ and $\beta >0$ then it can be ensured that the diagonals are non-zero.
%\begin{proposition}
%$X^{\star} = \arg\min \mathcal{H}(X) = \arg\min L(X)$.
%\end{proposition}

%\end{proposition}
%where $\mathcal{P}$ is a class of all $v\times v$ permutation matrices. 

\begin{lemma}
	For a given $C \in \mathbb{R}^{v\times v}$, if
\begin{align*}
	& d_i \geq \min
	\left\{
\sum_{j=1}^v c_{ji},\,\sum_{j=1}^vc_{ij}
	\right\},\,\,&& \textup{ for }i =v+1,\dots,v^2-v \\
& d_i \geq \frac{\sum_{j=1}^v(c_{ji}+c_{ij})}{2} && \textup{ for }i =1,v^2-v+1,\dots,v^2
%\label{eqmain}
\end{align*}
then \eqref{eqmain} is convex with $\gamma = 0$.
Furthermore, if $C$ is symmetric and positive semidefinite, and $\gamma > 1$, then \eqref{eqmain} is convex with $D=0$.
	\label{mainlem}
\end{lemma}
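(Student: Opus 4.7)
The plan is to verify positive semidefiniteness of the (symmetric) matrix $\widetilde{Q} = Q + D$ by reading off the block structure of $Q$ from \eqref{qmat} to expand the quadratic form as
\begin{equation*}
\bm{x}^\top Q \bm{x} = \gamma \sum_{a=1}^v \bm{x}_a^\top \overline{C} \bm{x}_a + \sum_{a=1}^{v-1} \bm{x}_a^\top C \bm{x}_{a+1} + \bm{x}_1^\top C \bm{x}_v,
\end{equation*}
where $\bm{x}_a \in \mathbb{R}^v$ denotes the $a$-th slice of $\bm{x} = \text{vec}(X)$ (i.e., the $a$-th column of $X$). The last bilinear term arises from the wrap-around blocks $(1,v)$ and $(v,1)$; the finite sum comes from the super- and sub-diagonal $C/2$ and $C^\top/2$ blocks; and the diagonal contribution from the $\gamma \overline{C}$ blocks.

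For the first half ($\gamma = 0$), I would bound each bilinear cross-term entry-wise with Young's inequality $c_{bb'} X_{b,a} X_{b',a'} \geq -\tfrac{c_{bb'}}{2}(\alpha_{a,b,b'} X_{b,a}^2 + X_{b',a'}^2/\alpha_{a,b,b'})$, choosing the AM-GM parameter $\alpha_{a,b,b'}>0$ independently per edge and per matrix position. Summing these bounds and combining with $\bm{x}^\top D \bm{x} = \sum_{a,b} d_{(a-1)v+b} X_{b,a}^2$, positive semidefiniteness reduces to checking that the resulting coefficient of every $X_{b,a}^2$ is non-negative. The key structural observation is that a middle slice ($2\leq a \leq v-1$) appears in exactly two cross-terms, once as the left argument and once as the right argument, so its two Young contributions can be tuned so that only one of the row sum $r_b := \sum_j c_{bj}$ or the column sum $\ell_b := \sum_j c_{jb}$ ultimately charges $d_{(a-1)v+b}$, producing the claimed $\min$ bound. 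The wrap-around term $\bm{x}_1^\top C \bm{x}_v$, however, places block $1$ on the left in two cross-terms and block $v$ on the right in two cross-terms; this symmetry-breaking prevents the rebalancing argument and forces the tighter average bound $(r_b+\ell_b)/2$ on the first and last slices.

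For the second half ($C$ symmetric positive semidefinite, $\gamma > 1$, $D = 0$), I would invoke the PSD Cauchy--Schwarz-type inequality $\bm{u}^\top C \bm{w} \geq -\tfrac{1}{2}(\bm{u}^\top C \bm{u} + \bm{w}^\top C \bm{w})$, which follows directly from expanding $(\bm{u}+\bm{w})^\top C(\bm{u}+\bm{w}) \geq 0$. Applied termwise to each cross-term in the expansion above, and noting that each slice $\bm{x}_a$ participates in exactly two cross-terms, the cross-terms collectively exceed $-\sum_a \bm{x}_a^\top C \bm{x}_a$. Because $\overline{C} = C$ in the symmetric case, the diagonal contribution equals $\gamma \sum_a \bm{x}_a^\top C \bm{x}_a$, and the two pieces combine to yield $\bm{x}^\top Q \bm{x} \geq (\gamma-1)\sum_a \bm{x}_a^\top C \bm{x}_a \geq 0$ whenever $\gamma \geq 1$ and $C \succeq 0$. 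The hard part will be the bookkeeping in the first half: the $\min$ bound for middle slices is strictly weaker than what a naive Gershgorin/diagonal-dominance argument would produce, so the Young parameters must be orchestrated carefully — exploiting the cyclic role of the wrap-around block $(1,v)$ in segregating the row-sum and column-sum contributions — to realize the sharper middle bound while still matching the average bound at the endpoints.
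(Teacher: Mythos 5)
Your treatment of the second half is correct and is a genuinely different --- and better --- route than the paper's. The paper proves $Q\succeq 0$ for $\gamma>1$ by a recursive Schur-complement argument (Propositions B.1--B.3), tracking the continued-fraction sequence $\beta_{i+1}=\beta_0-\tfrac{1}{4\beta_i}$ and bounding an infinite series; it even remarks that the statement ``cannot be proven trivially.'' Your two-line argument --- $\bm{u}^{\top}C\bm{w}\ge-\tfrac12(\bm{u}^{\top}C\bm{u}+\bm{w}^{\top}C\bm{w})$ applied to the $v$ cyclic cross-terms, each slice appearing exactly twice --- gives $\bm{x}^{\top}Q\bm{x}\ge(\gamma-1)\sum_a\bm{x}_a^{\top}C\bm{x}_a$ and hence convexity for all $\gamma\ge1$, which is strictly stronger than the paper's $\gamma>1$ and confirms the paper's numerical observation that $\gamma=1$ suffices. (Equivalently: $Q$ is a Kronecker product of $C$ with $\gamma I+\tfrac12(A^0+(A^0)^{\top})$, a circulant whose smallest eigenvalue is $\gamma-1$.)

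The first half is where your plan breaks down. Write $r_b=\sum_j|c_{bj}|$ and $\ell_b=\sum_j|c_{jb}|$. Each entrywise Young step charges the two squares it touches a total of $\tfrac{|c_{bb'}|}{2}(\alpha+\alpha^{-1})\ge|c_{bb'}|$, so over the $v$ cross-terms the diagonal compensation you can certify is at least $v\sum_{b,b'}|c_{bb'}|$, no matter how the parameters $\alpha_{a,b,b'}$ are orchestrated. But the bounds you are trying to realize sum to $2\sum_b\tfrac{r_b+\ell_b}{2}+(v-2)\sum_b\min\{r_b,\ell_b\}\le v\sum_{b,b'}|c_{bb'}|$, with equality only when $r_b=\ell_b$ for every $b$ --- in which case the $\min$ and the average coincide anyway. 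So the ``rebalancing'' that charges a middle slice only $\min\{r_b,\ell_b\}$ is unachievable. Worse, the target itself is false: take $C$ with a single nonzero entry $c_{21}=t>0$, so $\min\{r_b,\ell_b\}=0$ for all $b$; the stated condition then permits $d_i=0$ on all middle blocks, yet restricting $\bm{x}$ to two adjacent middle slices gives $\bm{x}^{\top}(Q_0+D)\bm{x}=t\,u_2w_1$, which is indefinite. What your method (with all $\alpha=1$, i.e.\ Gershgorin applied to the symmetric matrix $Q_0+D$) actually delivers is $d\ge r_b$ on the first slice, $d\ge\ell_b$ on the last slice, and $d\ge\tfrac{r_b+\ell_b}{2}$ on the middle slices; that is the correct sufficient condition. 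You should be aware that the source is inconsistent here: the paper's own proof assigns the $\min$ bound to the first and last blocks and the average to the middle blocks --- the opposite of the lemma statement --- and both rest on a ``$\min$ of row- and column-radius'' form of Gershgorin's theorem (Theorem A.2) that is not valid in general, so neither version of the index assignment is actually established by the paper. Your endpoint/middle asymmetry intuition about the wrap-around block is pointing at a real feature of the structure, but the conclusion it supports is the row-sum/column-sum/average trichotomy, not the stated $\min$.
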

The first part of the lemma is more general than since it does not restrict $C$ to be a positive semidefinite matrix. 
While the second part of the lemma requires $C$ to be a symmetric, positive semidefinite matrix, this is true in many real-world applications of the MVPDP problem since the weight of each arc corresponds to the tour distance, which is usually symmetric. 
By adjusting the weights of the self-loops, $C$ can be made positive semidefinite without changing the overall cost due to Proposition \ref{prop0}. 
% Notice that if $C$ is symmetric and positive semoide, the second part of the lemma is more concise in the sense that it does not require the construction of $D$.
While it might be tempting to do so, the second part of the lemma cannot be proven trivially or using the same techniques that were used in the first part of the proof. 

\subsection{MVPDP IQP Formulation}

Using the set of valid constraints found from the previous sections, along with our cost function, our problem can be formulated as a convex IQP with linear constraints as follows:
%\begin{flalign*}
%	&\underset{x = \textup{vec}(X):X \in \mathcal{P}}{\textup{minimize}} \,\,\,\,\,\,\,x^{\top}\widetilde{Q} x  \\
%	&\textup{subject to:} \\ 
%	&\bm{0} \leq T X\bm{d} \leq q\bm{1},\,\forall l \in K \textup{ (capacity constraint)} \\ 
%	%	&(\bm{e}_{k+i}^{\top}-\bm{e}_{k+n+i}^{\top})x\bm{n} \leq \bm{0},\,\,\,\forall i \in \lbrace 1,\dots n \rbrace &{\color{blue}\textup{precedence}} \\
%	&(\bm{e}_{k+i}^{\top}-\bm{e}_{k+n+i}^{\top})X\bm{n} = -k y_i \textup{ (precedence constraint)} \\
%	& \forall i \in I,\,\exists y_i \in \mathbb{N}, \dots,\\
%	\tag{\textup{IQP}}
%\end{flalign*}

%\subsection{A cycle constraint}
%\subsection{Permutation constraint}
%\subsection{End-tour constraint}
%\subsection{Sub-tour elimination constraint}
%\subsection{Capacity constraint}
%\subsection{Precedence constraint}
%\subsection{Vehicle association constraint}
%\section{SDP Relaxation}
%The SDP relaxation of our problem is:
%\begin{flalign*}
%&\underset{Y }{\textup{minimize}}\,\,\,\,\,\left \langle Q,Y \right \rangle \\
%&\textup{subject to:} \\
%& Y - \textup{diag}(Y)\textup{diag}(Y)^{\top} \succeq 0 \\
%&\bm{0} \leq T_l X\bm{d} \leq q\bm{1},\,\forall l \in K \\ 
%%	&(\bm{e}_{k+i}^{\top}-\bm{e}_{k+n+i}^{\top})x\bm{n} \leq \bm{0},\,\,\,\forall i \in \lbrace 1,\dots n \rbrace &{\color{blue}\textup{precedence}} \\
%&(\bm{e}_{k+i}^{\top}-\bm{e}_{k+n+i}^{\top})X\bm{n} = -k y_i,\,\,\,\forall i \in I,\,\exists y_i \in \mathbb{N},
%\tag{\textup{SDP}}
%\end{flalign*}
%where $Y = xx^{\top}$.
%\section{QP relaxation}
%\section{Clustering algorithm}
% \begin{sloppypar}
\begin{align}
& \underset{\bm{x} = \textup{vec}(X)}{\textup{min}}
& & \bm{x}^{\top}\widetilde{Q} \bm{x}  - \bm{d}^{\top}\bm{x} - \gamma \textup{trace}(C)  \nonumber \\
& \text{s.t.}
& &X \bm{1} = \bm{1},\, X^{\top} \bm{1} = \bm{1},
\label{con1}
\\
&&&x_i \in \lbrace 0,\,1 \rbrace, \textup{ for } i = 1,\dots,v^2 
\label{con2}
\\
&&&\bm{n}^{\top}X(\bm{e}_{1+i}-\bm{e}_{1+i+k}) \leq -\bm{1},\,\;\textup{for }i = 1,\dots,,k
\label{con3}
\\
&&&\bm{n}^{\top}X(\bm{e}_{1+k+i}-\bm{e}_{2+i}) = -\bm{1},\,\;\textup{for }i = 1,\dots,k-1,
\label{con4}
\\
% &&& t_{i\,i+n} \leq B_{i+n} - B_i - d_i \leq L,\,\; \textup{for }i =1,\dots,n \\
&&&\left((\bm{n}^{\top}X(\bm{e}_{1+j}-\bm{e}_{2k+1+i}) \leq -\bm{1}) \wedge (\bm{n}^{\top}X(\bm{e}_{2k+1+i}-\bm{e}_{1+k+j}) \leq -\bm{1})\right) \Rightarrow \nonumber \\ 
&&& \left((\bm{n}^{\top}X(\bm{e}_{2k+1+i}-\bm{e}_{2k+1+i+n}) \leq -\bm{1}) \wedge (\bm{n}^{\top}X(\bm{e}_{2k+1+i+n}-\bm{e}_{1+k+j}) \leq -\bm{1})\right) \nonumber \\
&&& \phantom{ stupid text to space things out mr } \textup{for }j \in \lbrace 1,\dots,k \rbrace,\,i \in \lbrace 1,\dots,n \rbrace,\label{con5}\\
&&& T X\bm{p} \leq q\bm{1}  \label{con6}
\end{align}
where \eqref{con1} is the degree constraint, \eqref{con2} is the binary variable constraint, \eqref{con3}-\eqref{con4} are the vehicle precedence constraints, 
\eqref{con5} is the customer precedence and association constraint (which is implemented using the Big-M method), and \eqref{con5} is the capacity constraint.
Importantly, as a result of Lemma \ref{mainlem}, the cost function of the IQP regardless of the specific permutation matrix is convex. 
% \setlength{\emergencystretch}{2pt}
% \end{sloppypar}
%\tag{\textup{ILP}}
%\end{equation}
%
%
%where each $x_{ij}^k$ is a binary variable which equals to $1$ if and only if there is a path between an arc $(i,j)$ from a tour generated by $k^{\textup{th}}$ vehicle.
%The first two constraints are known as degree constraints, and the third one is known as the subtour elimination constraint from the TSP.
%For a given homogeneous fleet, it is safe to assume that for each pair $i,j \in P \cup D$, $c_{ij}^l=c_{ij}^m$ for all $l,m \in K$.
% Unfortunately, to best of our knowledge, besides a few attempts to solve a limited form of MIQP with nonlinear constraints via heuristics, e.g., an MIQP with only equality quadratic constraints \cite{takapoui2017simple}, there is no known efficient method to solve the general form of the problem with inequality constraints.

\section{Numerical Simulations}
\label{sec5}
This section evaluates the performance of our method using a set of extensive computational experiments, which are all implemented in MATLAB (R2017a)\footnote{
The MATLAB scripts and all the test data that were used for the simulation can be found online at ``\url{https://github.com/hyongju/mvpdp}.''
}. 
% We demonstrate the effectiveness of our proposed formulation via multiple set of numerical simulations. 
% For this, we refer to as a small problem for demands up to $7$, as a small to medium problem for demands of at least $8$ and up to $50$, and as a medium to large problem if the number of demands is at least $50$.
% We demonstrate the effectiveness of our proposed formulation via multiple numerical simulations. 
First, we demonstrate the computational benefits of our formation when compared to the state-of-the-art formulation for problems with $8-50$ customer demands that are each serviced by a single vehicle.
To rigorously evaluate the performance of our IQP based formulation when compared to the state-of-the-art MILP formulation we use a variety of different types of commerical optimization solvers. 
Next, we show that our method can solve problems with multiple depots and vehicles to optimality.
Finally, we show the applicability of our method to solve real-world ride-sharing problems by utilizing real-world demand data.

% \subsection{A toy example} 
% We consider a toy example to illustrate global solution to a problem that can be guessed intuitively. 
% As can be seen from Fig. \ref{fig:fig2}, 
% in this example, we consider two vehicles of capacity $2$ with their respective origin and destination depots, and 4 customers with pickup and delivery locations that are placed in a planar space $[-2,\,2]\times[-2,\,2]$. 
% We solve the problem that was formulated using IQP using B\&C method using CPLEX (12.7.1) on MATLAB (R2017b) which took CPU time of less then 2 second on a modern desktop (Intel core i5-6400, 16 gigabyte memory).
% Notice that customer 1 and 2 was serviced by the 1st vehicle, and customer 3 and 4 was serviced by the 2nd vehicle simply because they are physically near each others.
% \begin{figure}[h]
% 	\centering
% 	\includegraphics[width=4in,height=2.5in]{}
% 	\caption{An exact solution for an example 2-depot MVPDP with $k=2$, $n=2$, $q=2$.}
% 	\label{fig:fig2}
% \end{figure}

\subsection{Evaluation While Servicing Between $8-50$ Demands Using a Single Vehicle}

We begin by evaluating the effectiveness of our method when compared to the state-of-the-art MILP formulation while solving problems with a single vehicle that is servicing demands for $8-50$ customers.
To solve each problem, we conduct simulations on our server computer (Intel Xeon E7-8867 v4 at 2.40GHz with 1023Gb memory) on MATLAB using five state-of-the-art MIP solvers, namely, CPLEX (12.7.1), GUROBI (7.5.1), MOSEK (8.0.0), BARON (17), XPRESS (18.1), where each value between the parenthesis indicate the solver's version number. 
For each solver, we tested $9$ possible customer demand values, $N:=\lbrace 
8,\,10,\,12,\,15,\,20,\,25,\,30,\,40,\,50 \rbrace$.
For each $N$, we generated $24$ random sample instances and generated an initial guess at random that was used for both the IQP and MILP solvers.
The corresponding symmetric cost matrix is generated from Euclidean distances between each pair of nodes and used by both the MILP and IQP methods.
For all test instances, $\gamma$ was set to $1.01$ to ensure that the IQP is convex (notice that each cost matrix $C$ is symmetric and positive semidefinite and this selection of $\gamma$ was sufficient to ensure the convexity of the IQP as a result of Lemma \ref{mainlem}).
% \textit{Remark (Fair comparison for solution qualities between ILP and IQP):
% 	We have found that direct comparison between the relative gaps for ILP and IQP is not meaningful. 
% 	The main reason for this phenomena is that different methods will generate different lower bounds, which will also affects the relative gap values. For the reasons, we have compared the objective function valued between the two methods instead.}

The (relative) MIP gap is defined as the difference between best known (incumbent) solution and the best bound divided by the best lower bound found by a relaxation of the integer program.
Since it is impossible to solve for an exact result for problems of this size, we terminate each solving process either when the solving time exceeds $2$ hours, or the relative MIP gap is below a threshold value of $3.5\%$. 
% Notice that
% \[
% \text{MIP relative gap} = \frac{\text{Int\_Obj}-\text{Relax\_LB}}{\text{Int\_Obj}}\times 100 \%
% \]
% where Int\_Obj is the best objective function value with integer solution and Relax\_LB is the lower objective of the relaxed non-integer problem in each iteration. 
We call a solution \textit{feasible} if it satisfies all the constraints.
We call a problem instance a \textit{success} if the MIP relative gap is no greater than $3.5\%$ and it is generated in $2$ hours.
We call a problem instance a \textit{partial success} if a feasible solution is found, but the MIP relative gap fails to reach $3.5\%$ within $2$ hours.
We call a problem instance a \textit{failure} if no feasible solution is found within $2$ hours.
Table \ref{suc_rate} illustrates the success rate and partial success rate for all problem instances with different solvers. 
Since the relative gap was chosen identically for both IQP and MILP when the solvers were successful the costs that were computed were nearly identical.
Notice that though GUROBI and MOSEK can actually find a feasible solution within 2 hours for up to 25 customers in the IQP formulation, they fail to achieve the $3.5\%$ gap.
This means except for BARON, all other solvers could obtain a feasible solution of circumstances with more customers for IQP than MILP.
Moreover XPRESS and CPLEX's IQP solvers are consistently able to solve almost all problem instances successfully whereas every MILP formulation is unable to be solved when the customer demand is greater than 12. 
%\ram{needs to fill in this number}. \hj{12}
Fig. \ref{fig:fig1} compares the average solving time/success rate to achieve within $3.5\%$ relative optimality gap between IQP and MILP using all five solvers, and Fig. \ref{fig:fig1CPLEX} shows the results for the CPLEX solver. 
\begin{table}[]
\scriptsize
\centering
\caption{Success rate and partial success rate (in parenthesis) for $n$ customer demands using a single vehicle and a maximum of $2$ hours of computation time (both rates are in percentage).}
\label{suc_rate}
\begin{tabular}{@{}l|l|l|l|l|l|l|l|l|ll@{}}
\toprule
\multirow{2}{*}{$n$} & \multicolumn{2}{c|}{CPLEX}                           & \multicolumn{2}{c|}{GUROBI}                          & \multicolumn{2}{c|}{MOSEK}                           & \multicolumn{2}{c|}{BARON}                           & \multicolumn{2}{c}{XPRESS}                                 \\ \cmidrule(l){2-11} 
                   & \multicolumn{1}{c|}{MILP} & \multicolumn{1}{c|}{IQP} & \multicolumn{1}{c|}{MILP} & \multicolumn{1}{c|}{IQP} & \multicolumn{1}{c|}{MILP} & \multicolumn{1}{c|}{IQP} & \multicolumn{1}{c|}{MILP} & \multicolumn{1}{c|}{IQP} & \multicolumn{1}{c|}{MILP}        & \multicolumn{1}{c}{IQP} \\ \midrule
8                  & 100 (0)               & 100 (0)              & 100 (0)               & 95.83 (4.17)         & 100 (0)               & 4.17 (95.83)         & 100 (0)               & 100 (0)              & \multicolumn{1}{l|}{100 (0)} & 100 (0)             \\
10                 & 100 (0)               & 100 (0)              & 100 (0)               & 0 (100)              & 100 (0)               & 0 (100)              & 95.83 (0)             & 100 (0)              & \multicolumn{1}{l|}{100 (0)} & 100 (0)             \\
12                 & 0 (0)                 & 100 (0)              & 95.83 (0)             & 0 (100)              & 91.67 (0)             & 0 (100)              & 41.67 (0)             & 95.83 (0)            & \multicolumn{1}{l|}{100 (0)} & 100 (0)             \\
15                 & 0 (0)                 & 100 (0)              & 0 (0)                 & 0 (100)              & 0 (0)                 & 0 (100)              & 0 (0)                 & 0 (0)                & \multicolumn{1}{l|}{0 (0)}   & 100 (0)             \\
20                 & 0 (0)                 & 100 (0)              & 0 (0)                 & 0 (95.83)            & 0 (0)                 & 0 (79.17)            & 0 (0)                 & 0 (0)                & \multicolumn{1}{l|}{0 (0)}   & 100 (0)             \\
25                 & 0 (0)                 & 100 (0)              & 0 (0)                 & 0 (45.83)            & 0 (0)                 & 0 (20.83)            & 0 (0)                 & 0 (0)                & \multicolumn{1}{l|}{0 (0)}   & 100 (0)             \\
30                 & 0 (0)                 & 100 (0)              & 0 (0)                 & 0 (0)                & 0 (0)                 & 0 (0)                & 0 (0)                 & 0 (0)                & \multicolumn{1}{l|}{0 (0)}   & 95.83 (0)           \\
40                 & 0 (0)                 & 100 (0)              & 0 (0)                 & 0 (0)                & 0 (0)                 & 0 (0)                & 0 (0)                 & 0 (0)                & \multicolumn{1}{l|}{0 (0)}   & 0 (0)               \\
50                 & 0 (0)                 & 97.92 (0)            & 0 (0)                 & 0 (0)                & 0 (0)                 & 0 (0)                & 0 (0)                 & 0 (0)                & \multicolumn{1}{l|}{0 (0)}   & 0 (0)               \\ \bottomrule
\end{tabular}
\end{table}

\begin{figure}[h]
	\centering
	\includegraphics[width=2.6in]{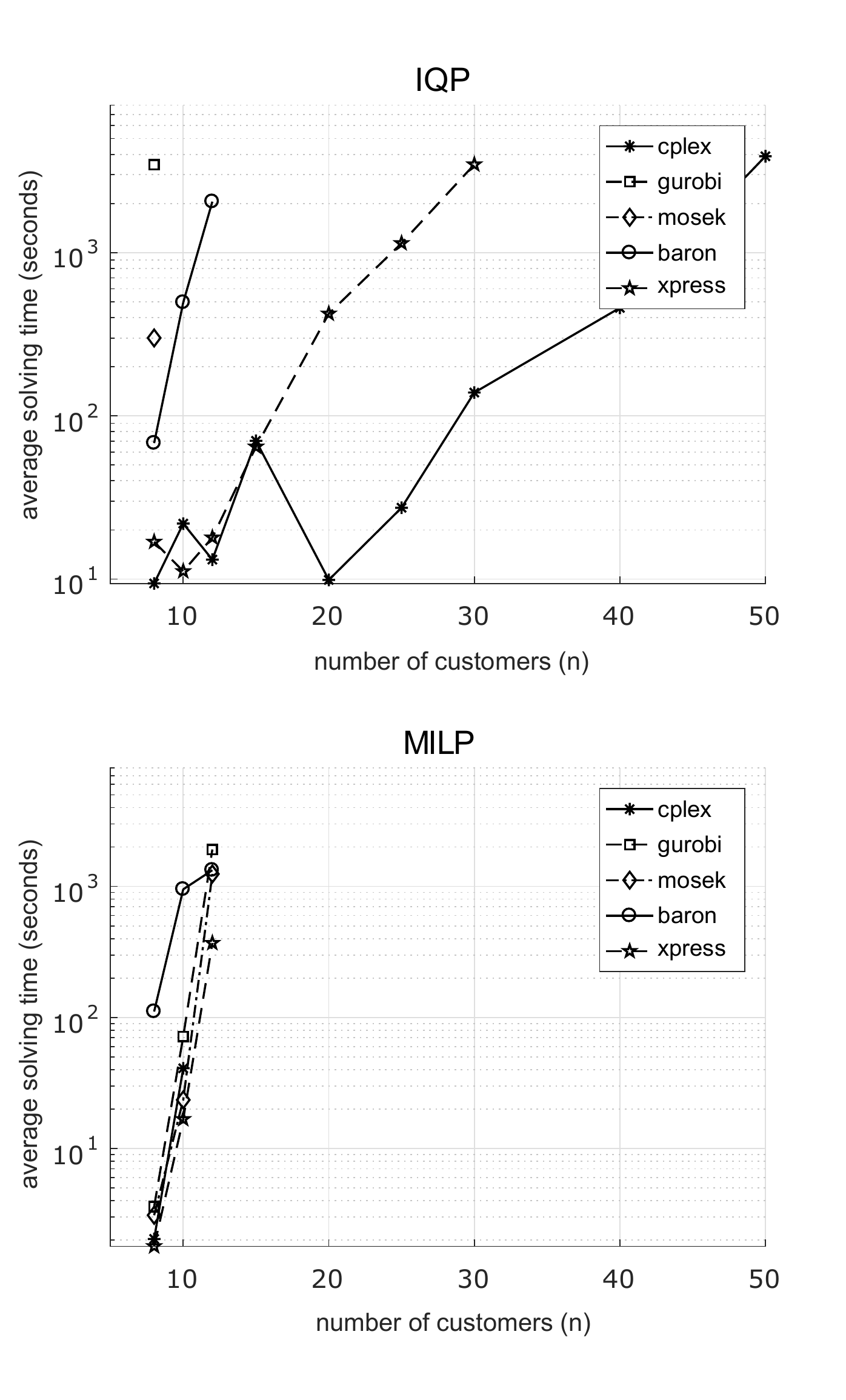}\,\,\,\,\,\,\,\,\,\,
	\includegraphics[width=2.6in]{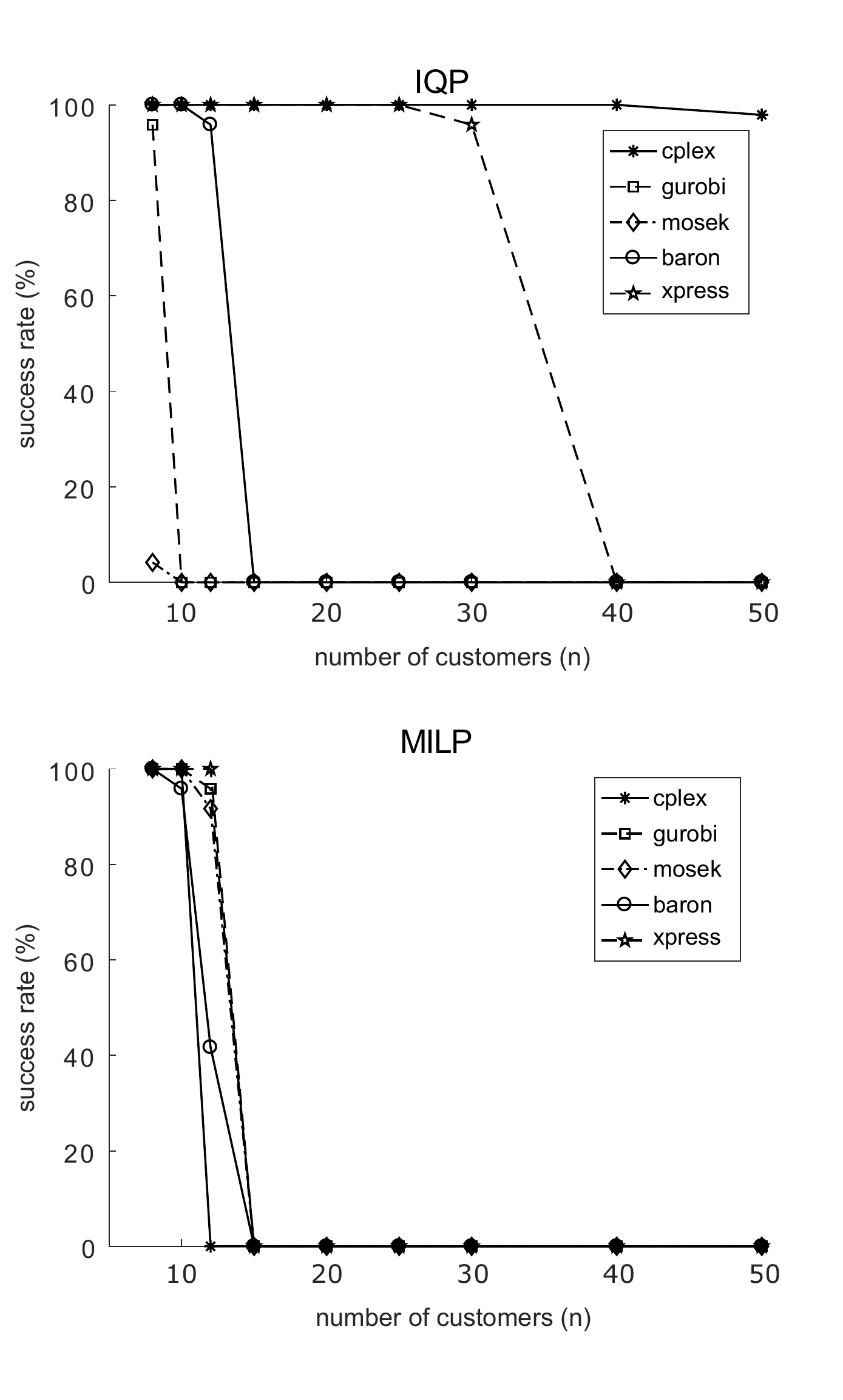}
	\caption{An illustration of the average solving time (CPU time in seconds) (left) and  success rate to reach within a prespecified gap of $3.5\%$ within the alotted time (right). Those computations that exceeded two hours were treated as two hours when generating the average solving time figures.}
	\label{fig:fig1}
\end{figure}

\begin{figure}[h]
	\centering
	\includegraphics[width=6.6in]{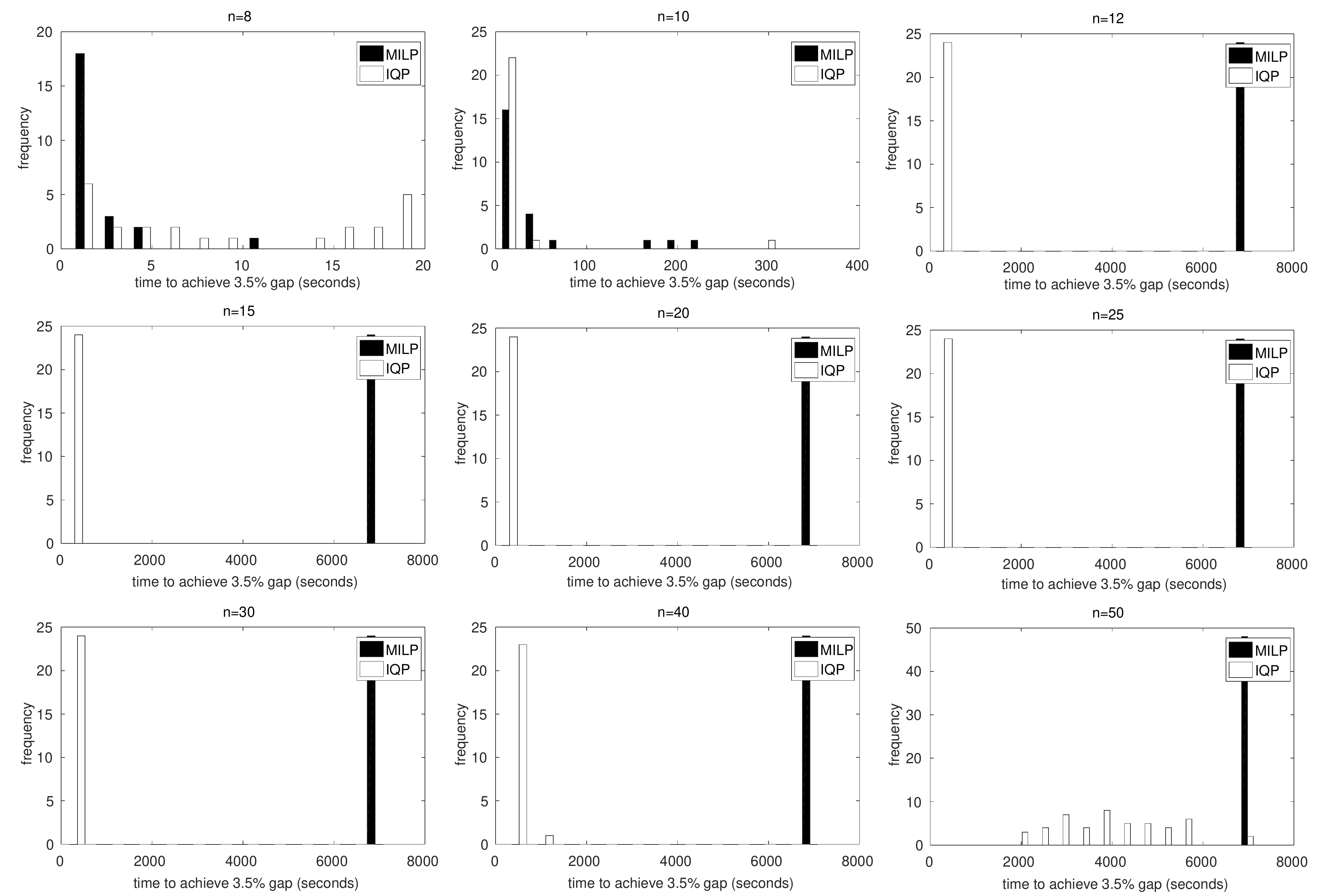}
	\caption{Comparison of solving times (in seconds) for different values of $n$ between IQP and MILP when using the CPLEX solver.}
	\label{fig:fig1CPLEX}
\end{figure}

%\subsection{A simulation using real-world data}
%MANHATTAN...
% \subsection{Multi-depot, multi-vehicle cases}
% \subsection{Convex IQP vs Non-convex IQP}

\subsection{Evaluation While Servicing Demands Between $4-7$ Customers Using Multiple Vehicles}

Here, we demonstrate that our formulation can be used to solve problems with multiple vehicles.
Again, we solve the multi-depot version of MVPDP that was formulated using IQP using CPLEX (12.7.1) with MATLAB (R2017b) on a desktop (Intel core i5-6400, 16 gigabyte memory) for $10$ small problem instances that were randomly generated (both the positions of depots, and customers demands were uniformly sampled from a unit square, and number of customers $N$, number of vehicles $k$, and vehicle capacity $q$ were chosen between $\lbrace 4,5,6,7 \rbrace$,$\lbrace 1,2,3,4,5 \rbrace$, and $\lbrace 1,2,3,4\rbrace$, respectively).
For a given two points and the edge connecting the two, the edge cost is defined as the Euclidean distance between the two points.
The customer demands, the vehicle depots, and the computed IQP solution for the $10$ sample problem instances are depicted in Fig. \ref{fig:fig11} and Fig. \ref{fig:fig10}.
Notice that almost all of the problem instances were solved within several minutes with one exception (i.e. Fig. \ref{fig:fig62}, which took more than an hour to solve).
% Though a more complicated cost that incorporated service time or traffic conditions can be worked into the cost, we have chosen this simpler cost to illustrate the performance of our proposed method.

\begin{figure}
\centering
    \begin{subfigure}[b]{0.49\textwidth}
	\centering
	\includegraphics[width=2.25in]{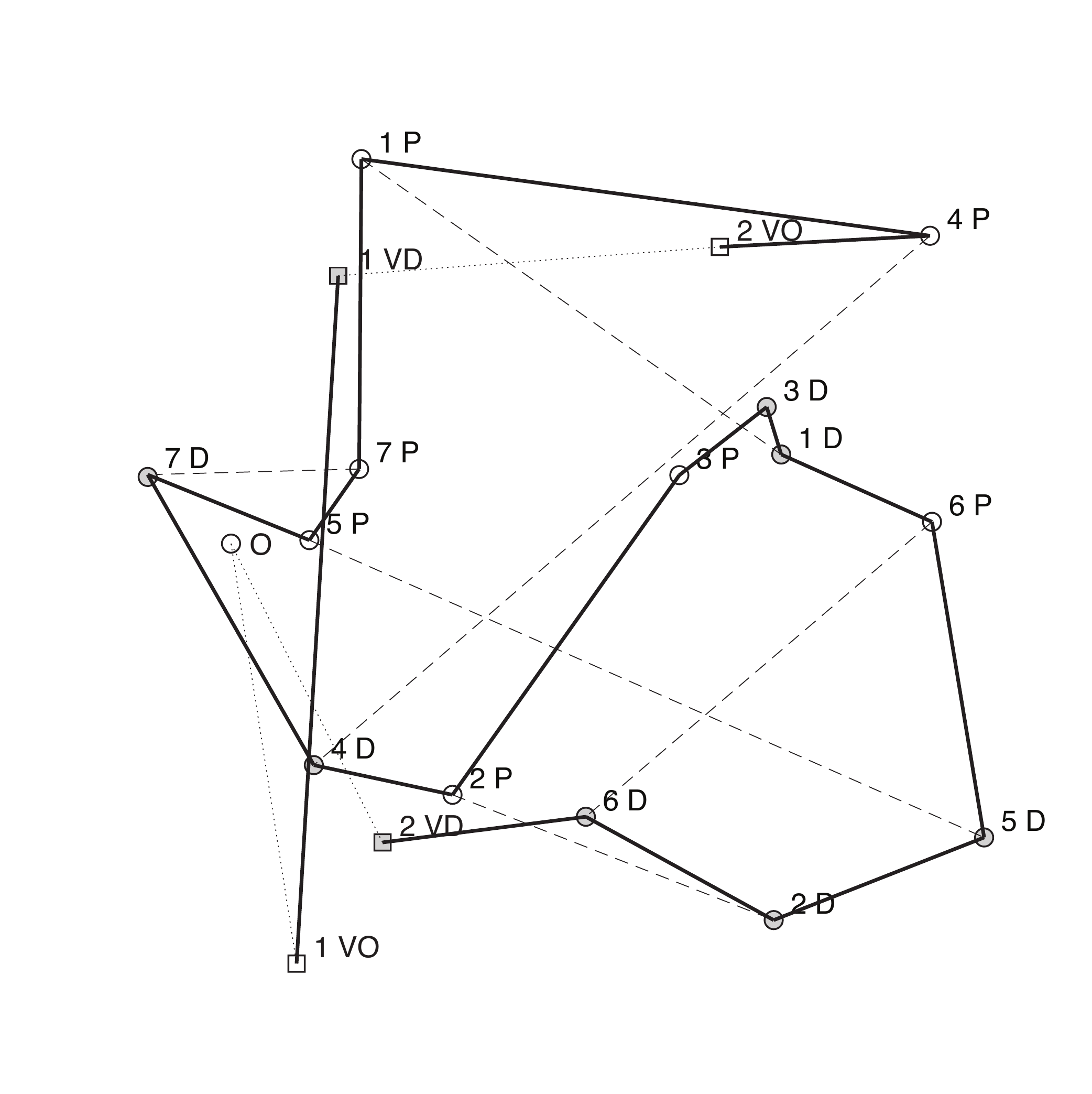}
	\caption{$n=7,\,k=2,\,q=2$, solving time: $725.85$ seconds}
	\label{fig:fig61}
    \end{subfigure}%
    \begin{subfigure}[b]{0.49\textwidth}
	\centering
	\includegraphics[width=2.25in]{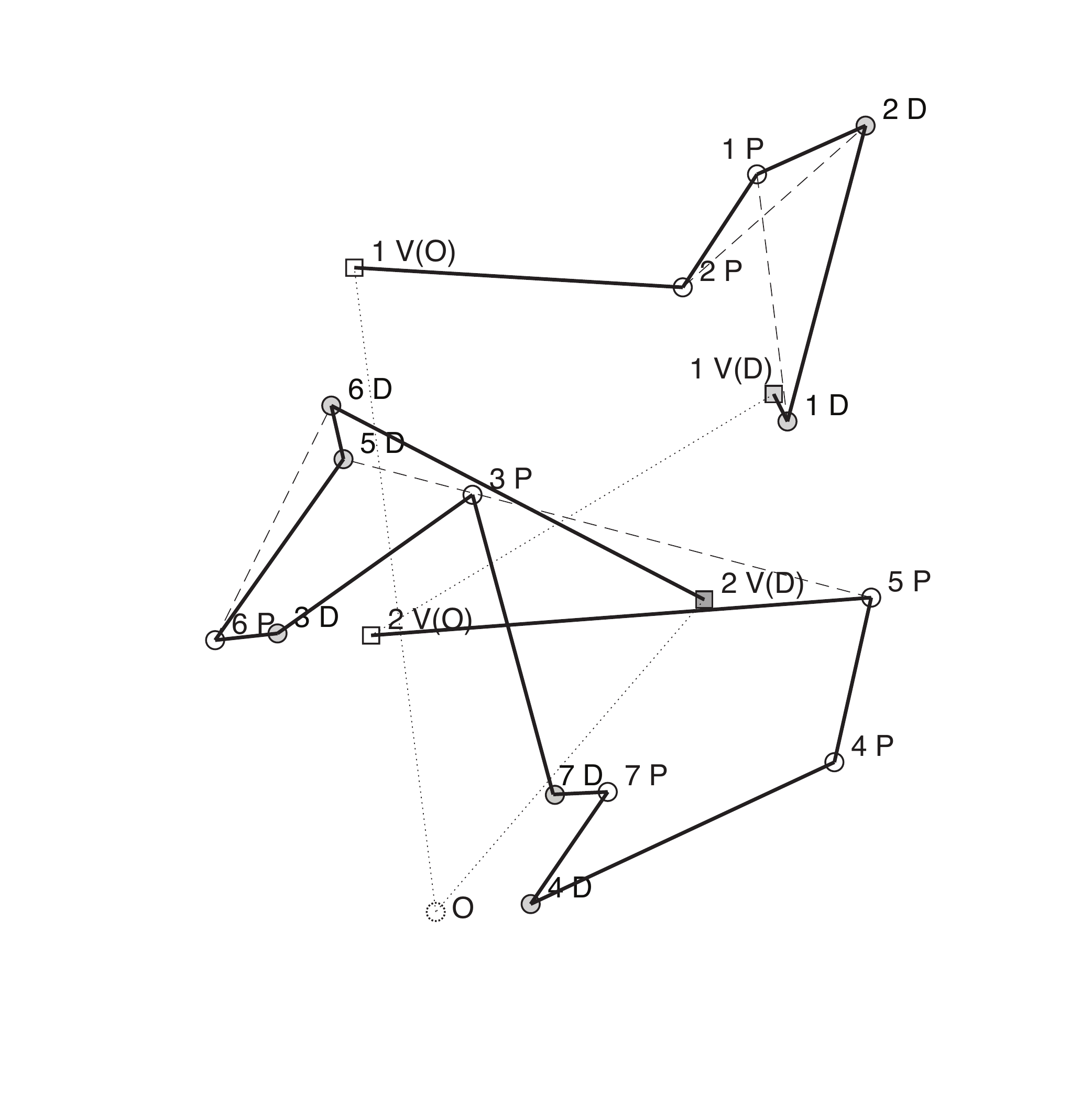}
	\caption{$n=7,\,k=3,\,q=3$, solving time: $4501.10$ seconds}
	\label{fig:fig62}
    \end{subfigure}  
    \begin{subfigure}[b]{0.49\textwidth}
	\centering
	\includegraphics[width=2.25in]{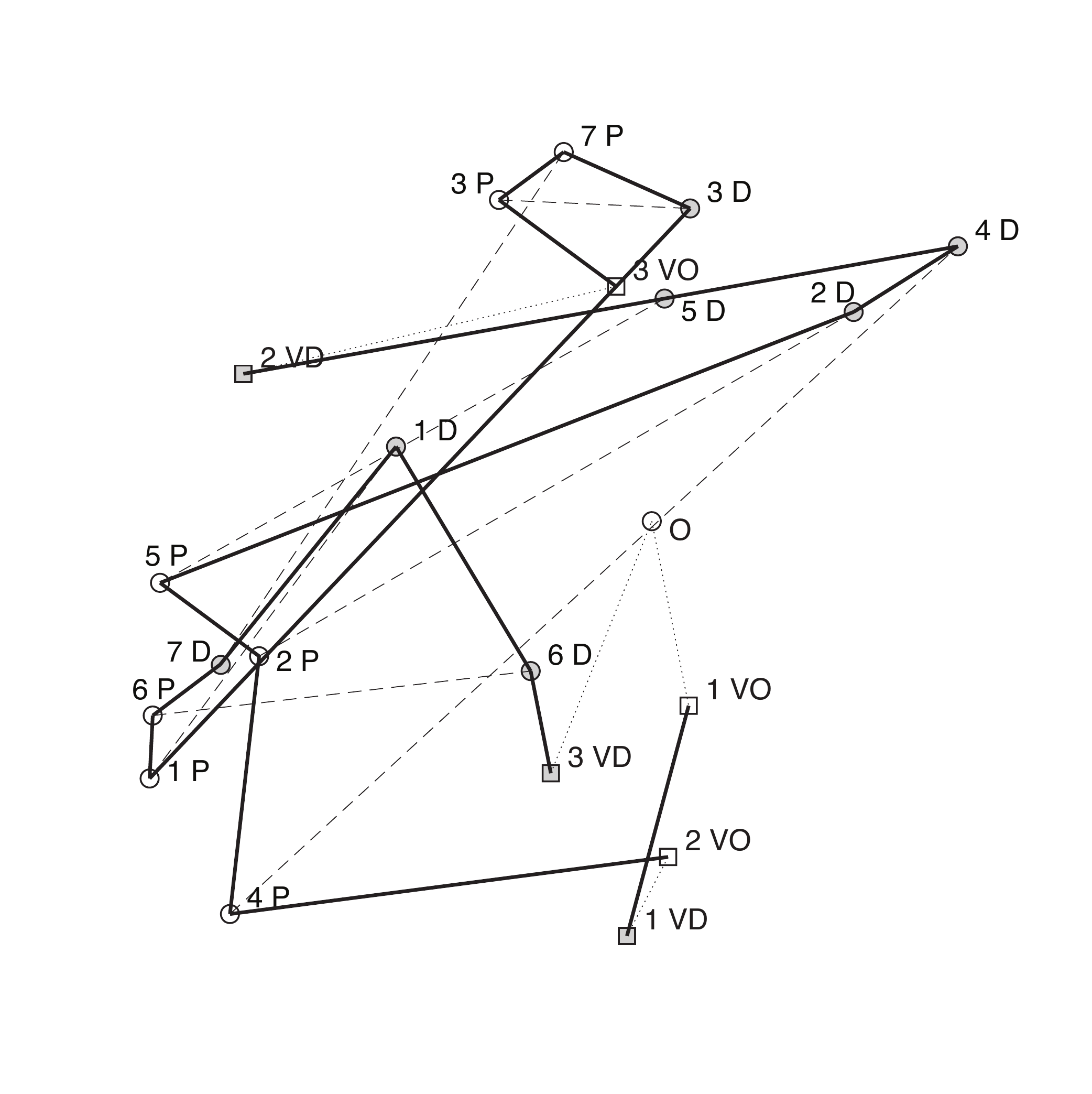}
	\caption{$n=5,\,k=2,\,q=2$, solving time: $6.93$ seconds}
	\label{fig:fig63}
    \end{subfigure} 
    \begin{subfigure}[b]{0.49\textwidth}
	\centering
	\includegraphics[width=2.25in]{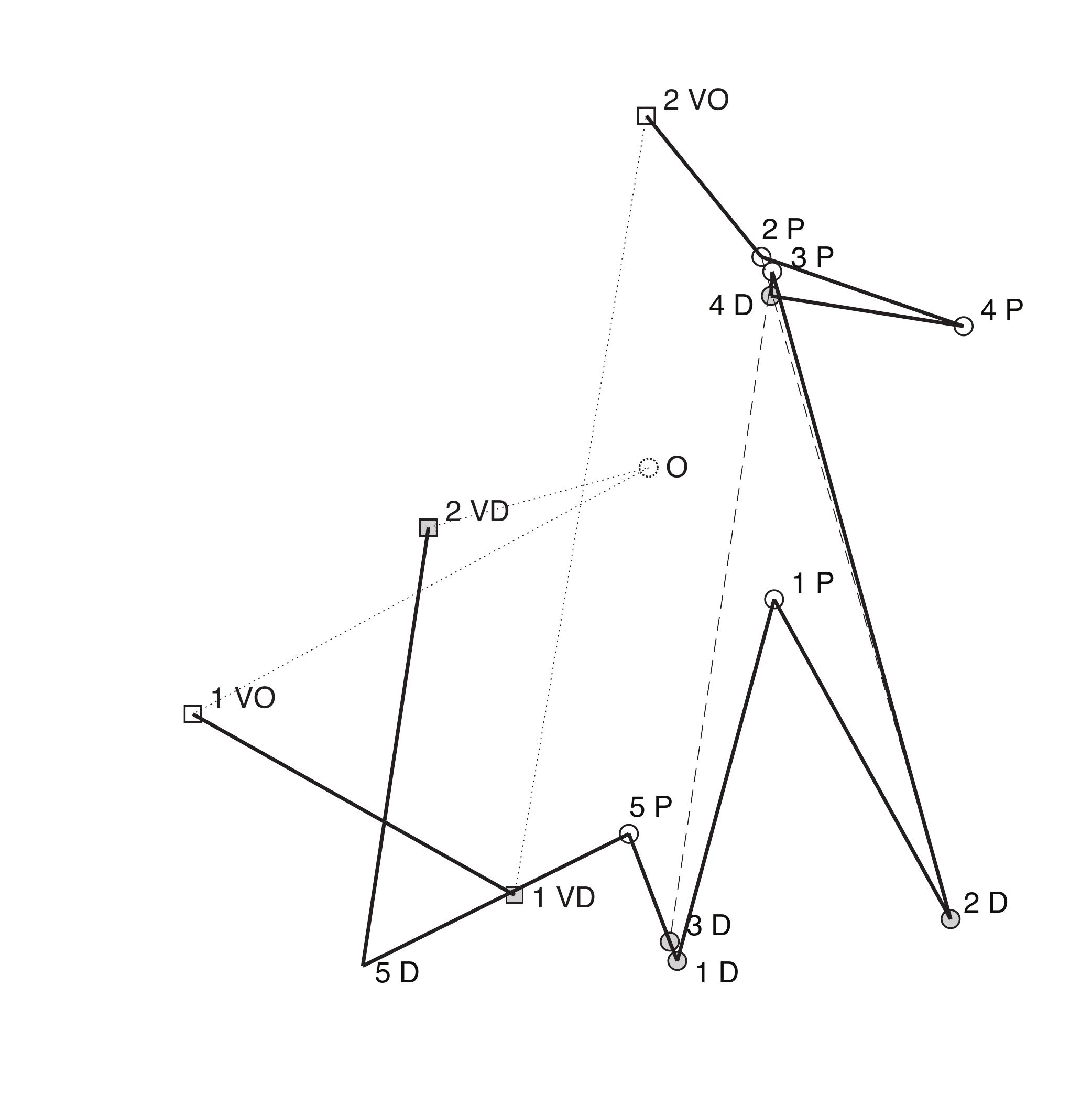}
	\caption{$n=6,\,k=4,\,q=2$, solving time: $1494.69$ seconds}
	\label{fig:fig64}
    \end{subfigure} 
    \begin{subfigure}[b]{0.49\textwidth}
	\centering
	\includegraphics[width=2.25in]{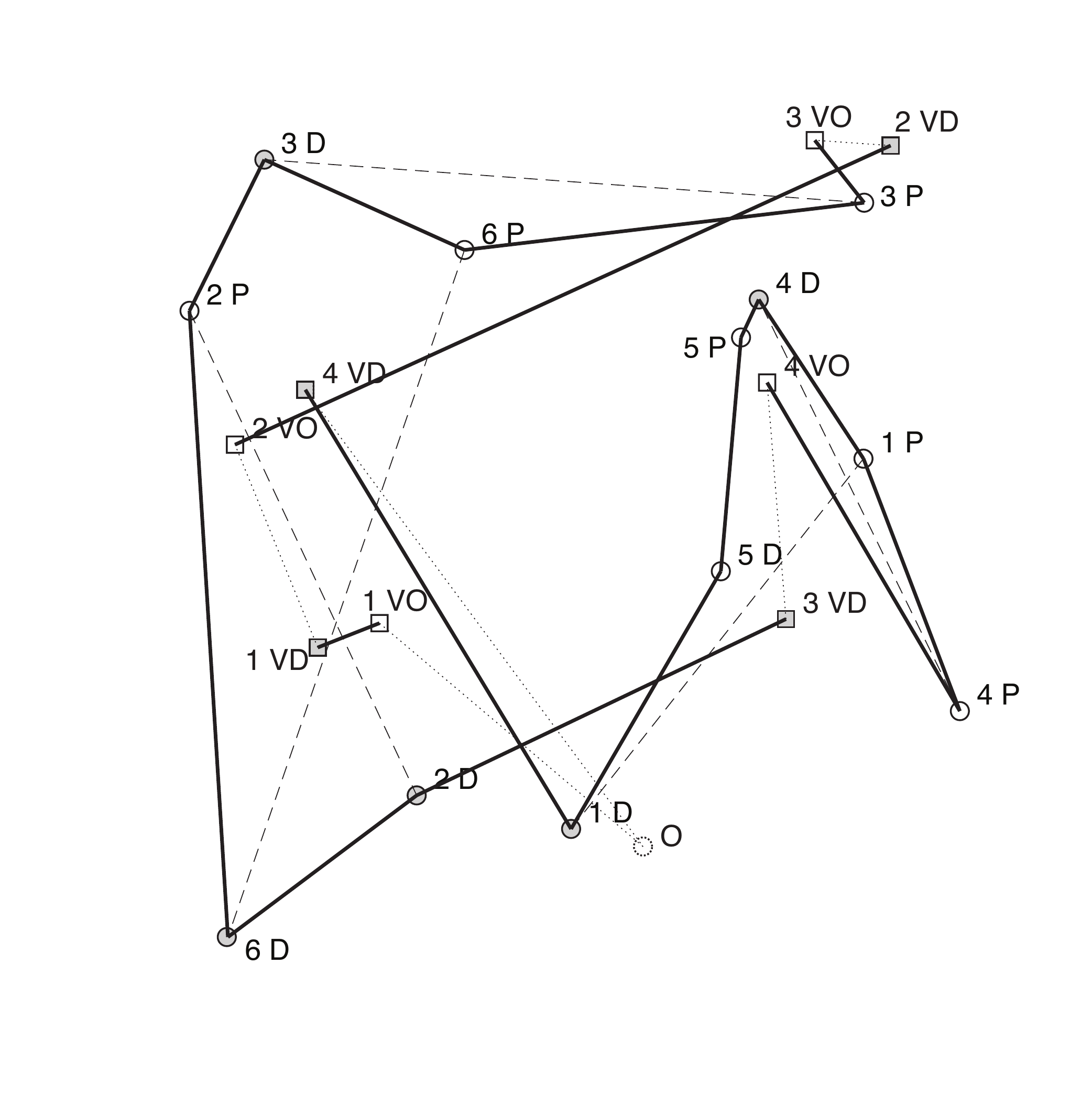}
	\caption{$n=5,\,k=2,\,q=5$, solving time: $7.65$ seconds}
	\label{fig:fig65}
    \end{subfigure}%
    \begin{subfigure}[b]{0.49\textwidth}
	\centering
	\includegraphics[width=2.25in]{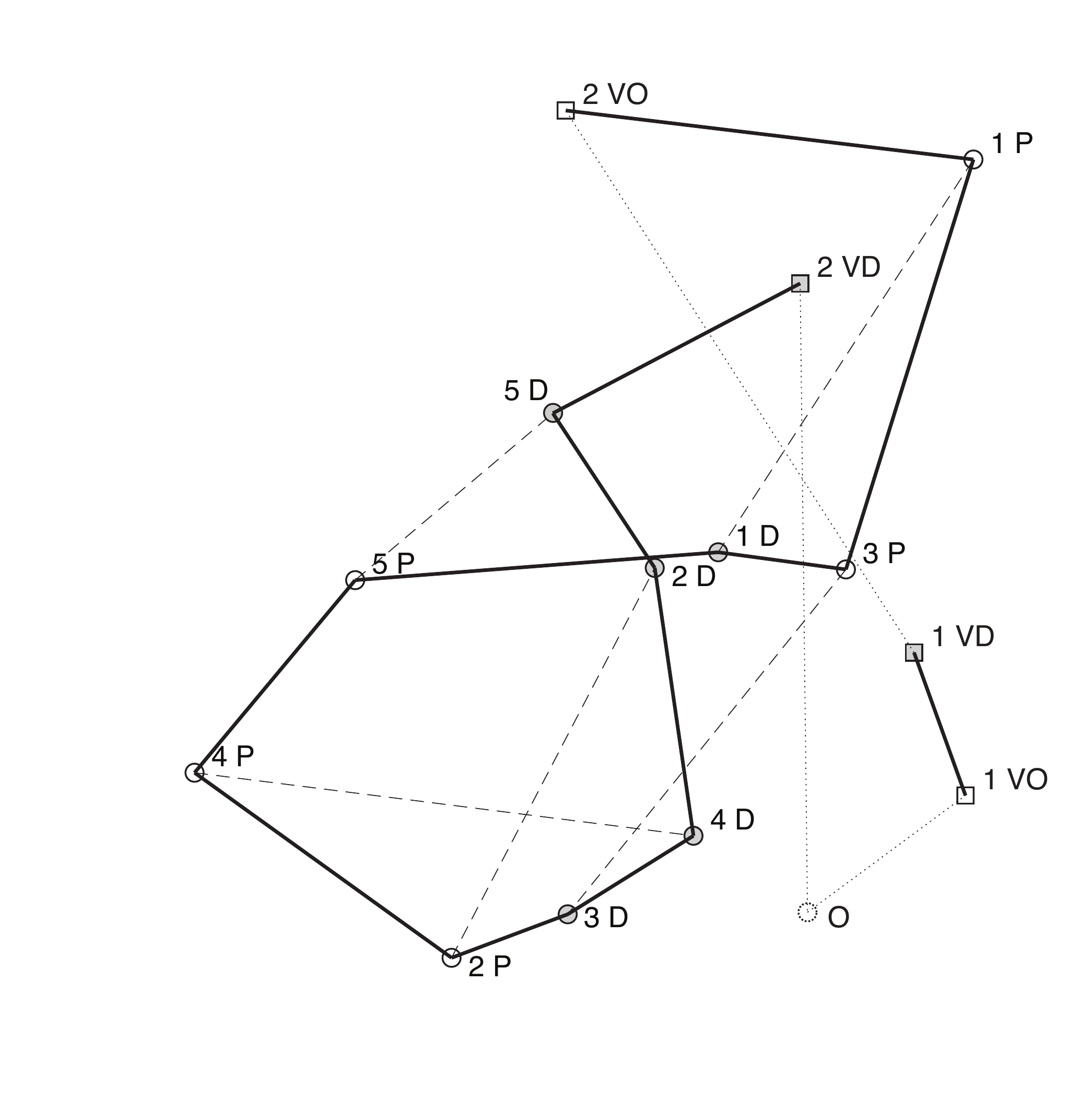}
	\caption{$n=7,\,k=1,\,q=2$, solving time: $80.02$ seconds}
	\label{fig:fig66}
    \end{subfigure} 
    \caption{An illustration of the exact solutions computed using the IQP formulation for the multi-vehicle version of the MVPDP for $6$ of the $10$ problem instances that were randomly generated. In each instance 'P' corresponds to customer's pickup demand, 'D'  corresponds to customer's delivery demand, 'VO' corresponds to vehicle's origin depot, 'VD' corresponds to vehicle destination depot,  and 'O' corresponds to the virtual node. The number $i$ preceded by the alphabet denotes either the $i$th depot or $i$th customer. The solid lines show the optimal solutions which are cycles, and dashed lines show either vehicle or customer's association, and dotted line show the virtual edges whose edge costs are zero. The empty circles are pick-up nodes, the gray circles are delivery nodes, empty squares are origin depot nodes, and the gray square are destination depot nodes.}
   \label{fig:fig11}    
\end{figure}
    
\begin{figure}
\centering    
    \begin{subfigure}[b]{0.49\textwidth}
	\centering
	\includegraphics[width=2.8in]{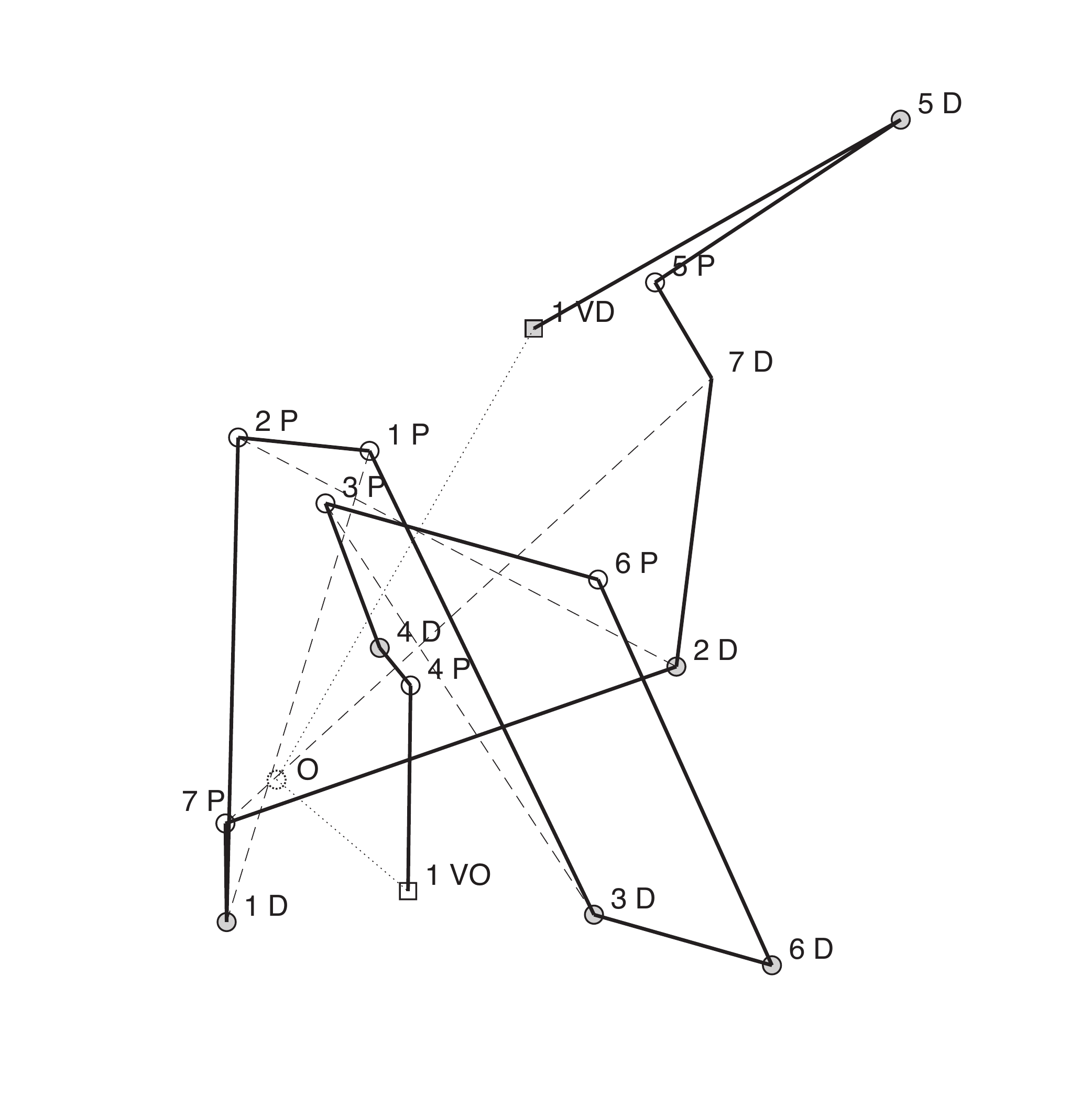}
	\caption{$n=6,\,k=3,\,q=4$, solving time: $321.92$ seconds}
	\label{fig:fig71}
    \end{subfigure}
    \begin{subfigure}[b]{0.49\textwidth}
	\centering
	\includegraphics[width=2.8in]{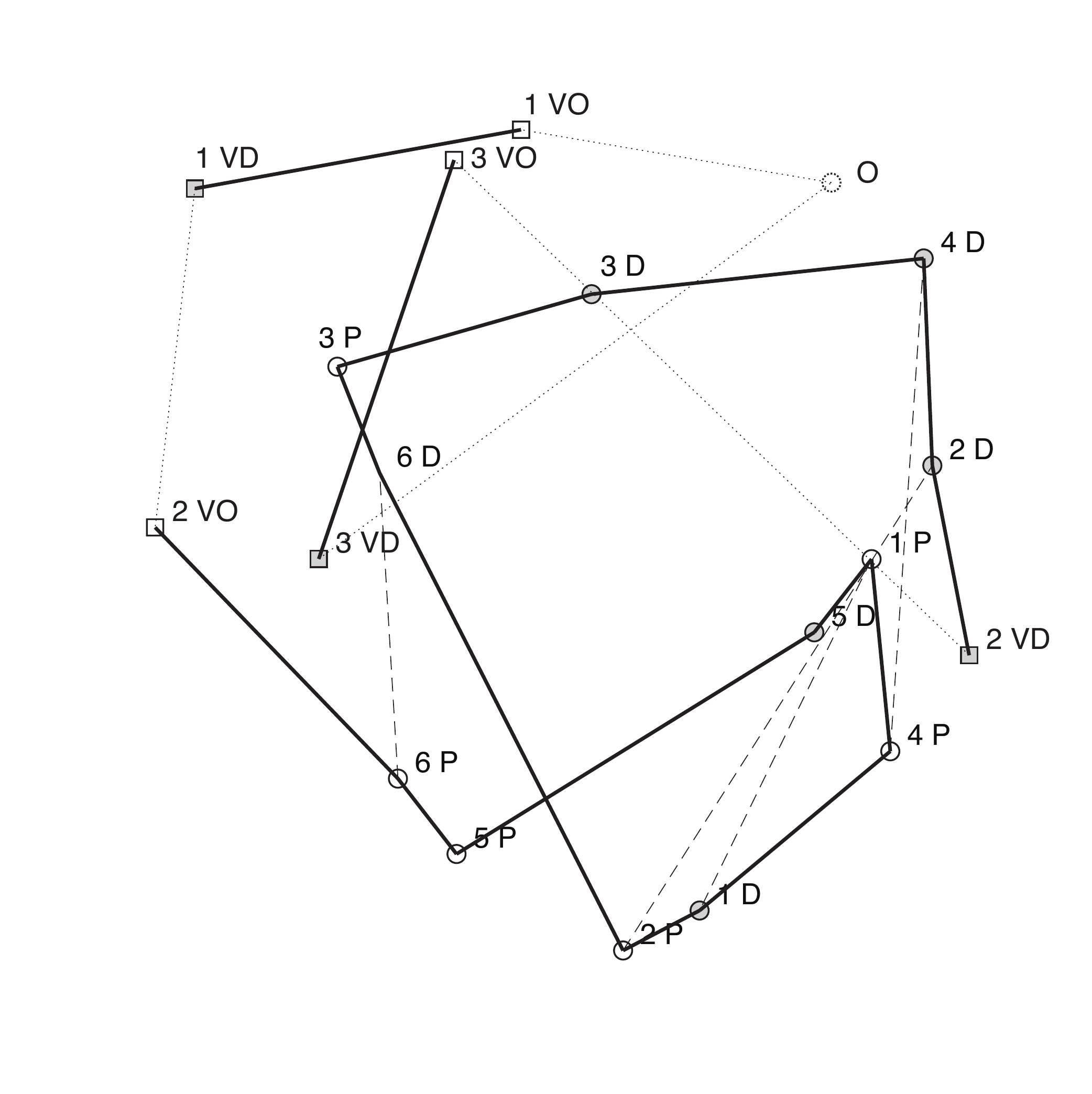}
	\caption{$n=4,\,k=2,\,q=3$, solving time: $0.26$ seconds}
	\label{fig:fig72}
    \end{subfigure} 
    \begin{subfigure}[b]{0.49\textwidth}
	\centering
	\includegraphics[width=2.8in]{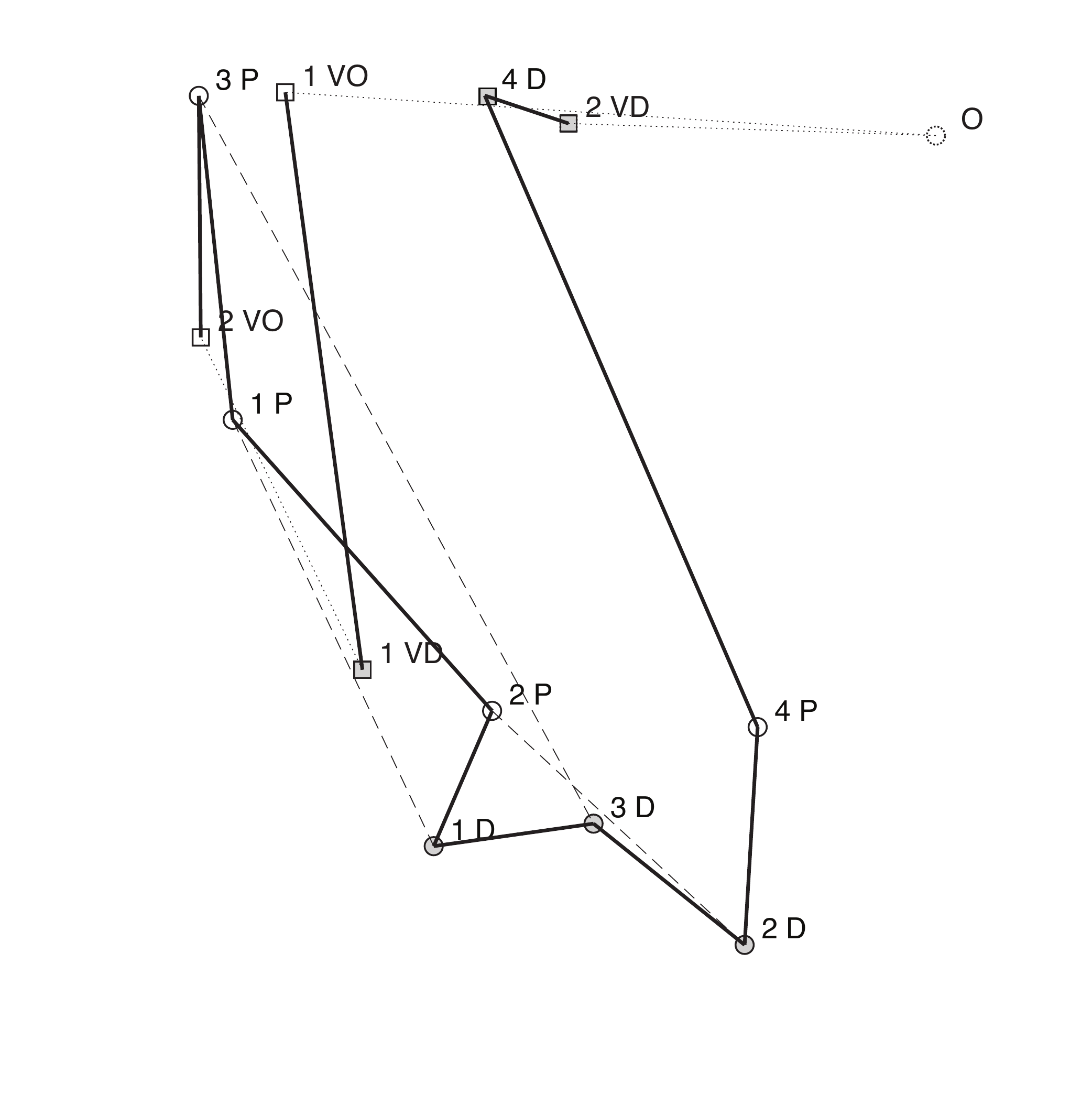}
	\caption{$n=5,\,k=5,\,q=2$, solving time: $221.71$ seconds}
	\label{fig:fig73}
    \end{subfigure}   
    \begin{subfigure}[b]{0.49\textwidth}
	\centering
	\includegraphics[width=2.8in]{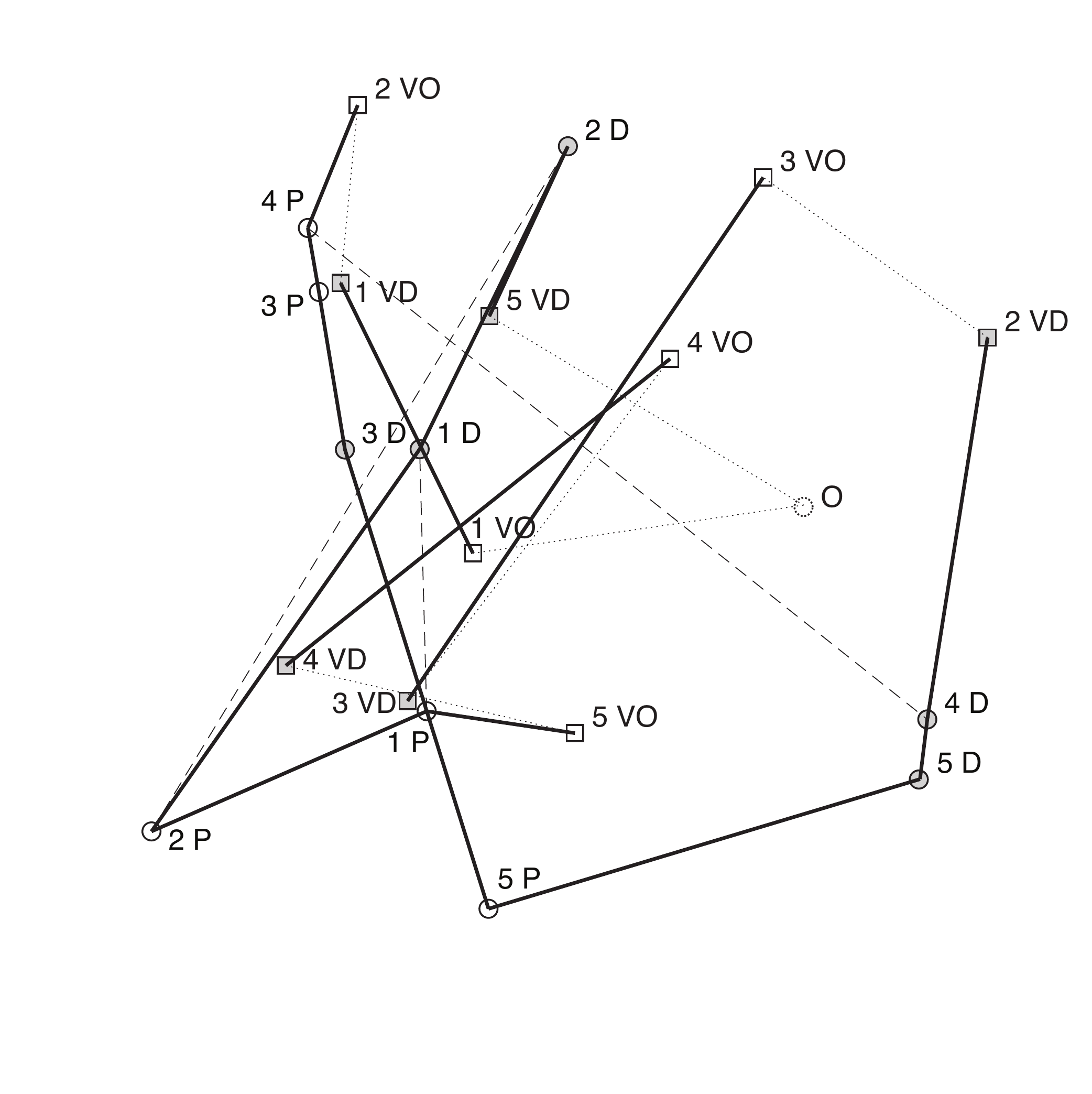}
	\caption{$n=7,\,k=2,\,q=4$, solving time: $1270.70$ seconds}
	\label{fig:fig74}
    \end{subfigure}    
    \caption{An illustration of the exact solutions computed using the IQP formulation for the multi-vehicle version of the MVPDP for $4$ of the $10$ problem instances that were randomly generated. In each instance 'P' corresponds to customer's pickup demand, 'D'  corresponds to customer's delivery demand, 'VO' corresponds to vehicle's origin depot, 'VD' corresponds to vehicle destination depot,  and 'O' corresponds to the virtual node. The number $i$ preceded by the alphabet denotes either the $i$th depot or $i$th customer. The solid lines show the optimal solutions which are cycles, and dashed lines show either vehicle or customer's association, and dotted line show the virtual edges whose edge costs are zero. The empty circles are pick-up nodes, the gray circles are delivery nodes, empty squares are origin depot nodes, and the gray square are destination depot nodes.}
 \label{fig:fig10}
\end{figure}

\subsection{Real-World Evaluation}
We further demonstrate via numerical simulation that the proposed IQP formulation can generate low-cost tours for a single yellow cab given real-world demands drawn from Manhattan in New York City.
For this simulations, we retrieved the road map of Manhattan from the Open Street Maps (OSM) database \cite{OpenStreetMap} and represent the road map as an undirected graph, $G(V,E)$. When constructing the cost matrix $C$ for the problem, each arc cost incurred between any two locations pair was obtained by computing the minimum travel distance over the road map. 
 Fig. \ref{fig:fig13}-\ref{fig:fig14} illustrates $3$ example solutions generated by the IQP formulation with $12, 15,$ and $18$ customers with a single vehicle with a maximum ride-sharing capacity of $2, 2,$ and  $3$, respectively. 
For this simulation, we have set a maximum solving time of $10$ minutes and set the MIP gap to $2\%$.
The pickup-delivery locations for customers demands for each case is chosen in sequential manner from the real yellow-cab NYC data\footnote{This TLC Trip Record Data was downloaded from ``\url{http://www.nyc.gov}'' at no cost.} during an arbitrary time-window (00:00-01:00) on 01/01/2013. 
As can be seen from Fig. \ref{fig:fig13}-\ref{fig:fig14}, the computed solution tours---drawn in a solid lines---are obtained via the CPLEX solver using our IQP formulation.
For the three problems, the corresponding state-of-the-art MILP formulation was not able to find any feasible solutions within 10 minutes. 
% For the reason, both Fig. \ref{fig:fig13} and Fig. \ref{fig:fig14} do not contain these results.
The simulation results in the current section reiterates our finding from the previous set of simulations that, our proposed IQP formulation can be applied to generate feasible solutions of with at most $2\%$ MIP gap within the alotted time for problems on which existing state-of-the-art solvers are unable to generate even a feasible solution.
 
\begin{figure}
\centering    
    \begin{subfigure}{\textwidth}
	\centering
	\includegraphics[width=5.5in]{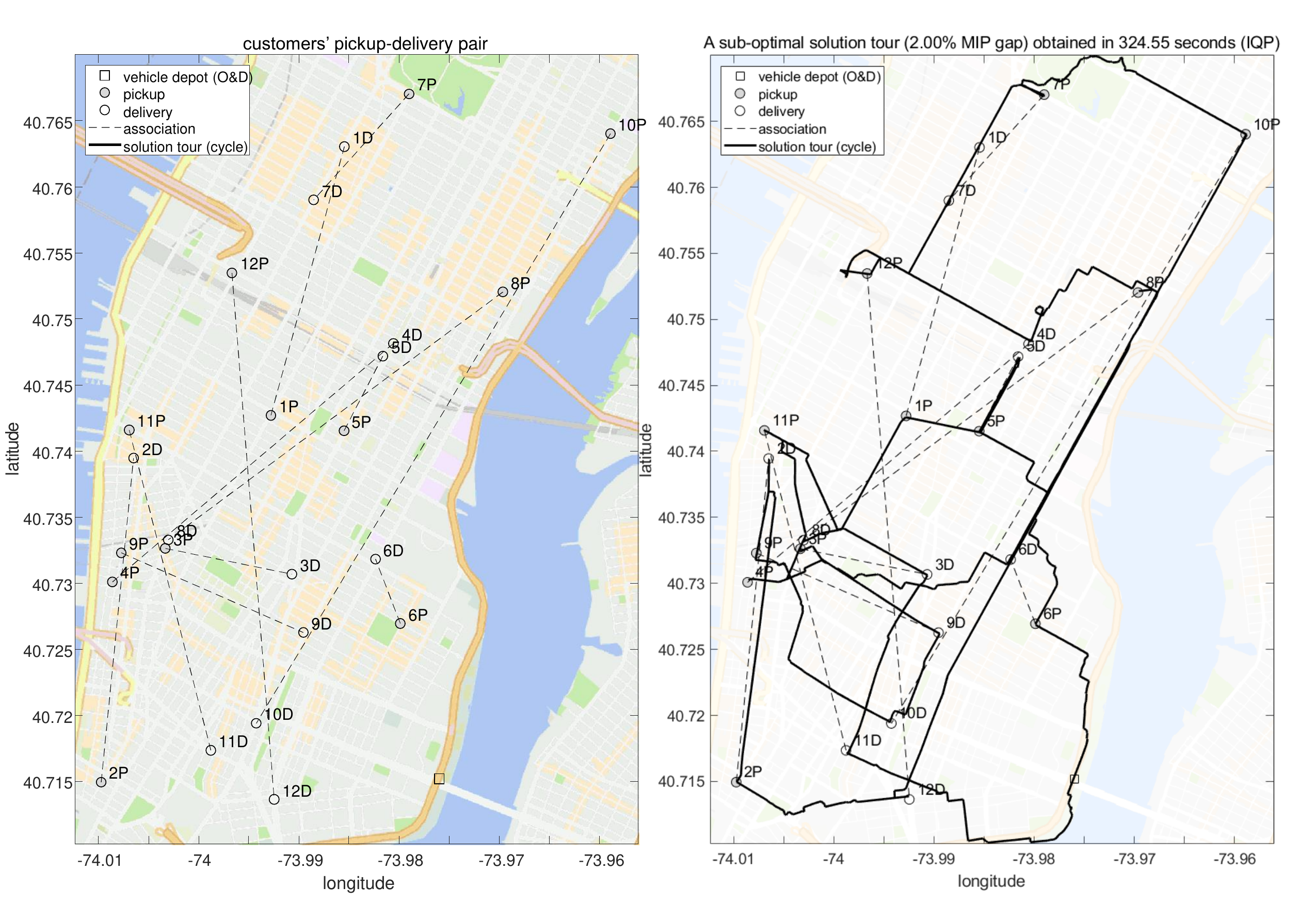}
	\caption{$n=12,\,k=1,\,q=3$}
	\label{fig:fig131}
    \end{subfigure}
    \begin{subfigure}{\textwidth}
	\centering
	\includegraphics[width=5.5in]{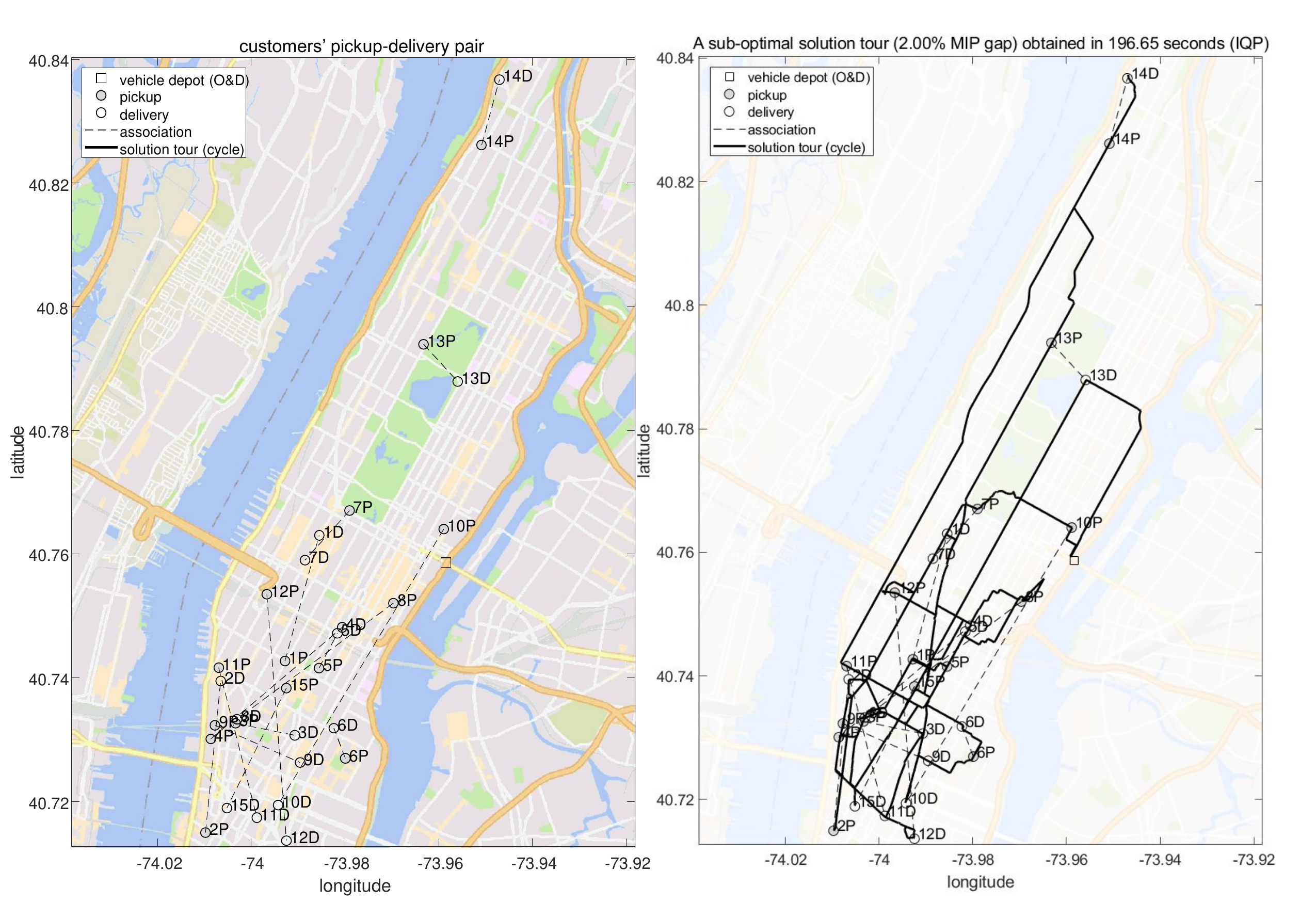}
	\caption{$n=15,\,k=1,\,q=3$}
	\label{fig:fig132}
    \end{subfigure} 
    \caption{
    An illustration of the sub-optimal solutions (right) with guaranteed $2\%$ MIP gap generated using the IQP formulation where customers' demands (left) were acquired from a real-world yellow cab data during $00:00-01:00$ on $01/01/2013$ from New York City.
    Notice that for each simulation the vehicle's origin and destination depots were chosen at random and coincide with one another.
    Solutions from the state-of-the-art MILP formulations were not displayed since no feasible solution was found within the maximum solving time of $10$ minutes.
    }
 \label{fig:fig13}
\end{figure}

\begin{figure}
    \centering
        \begin{subfigure}[b]{\textwidth}
	\centering
	\includegraphics[width=6in]{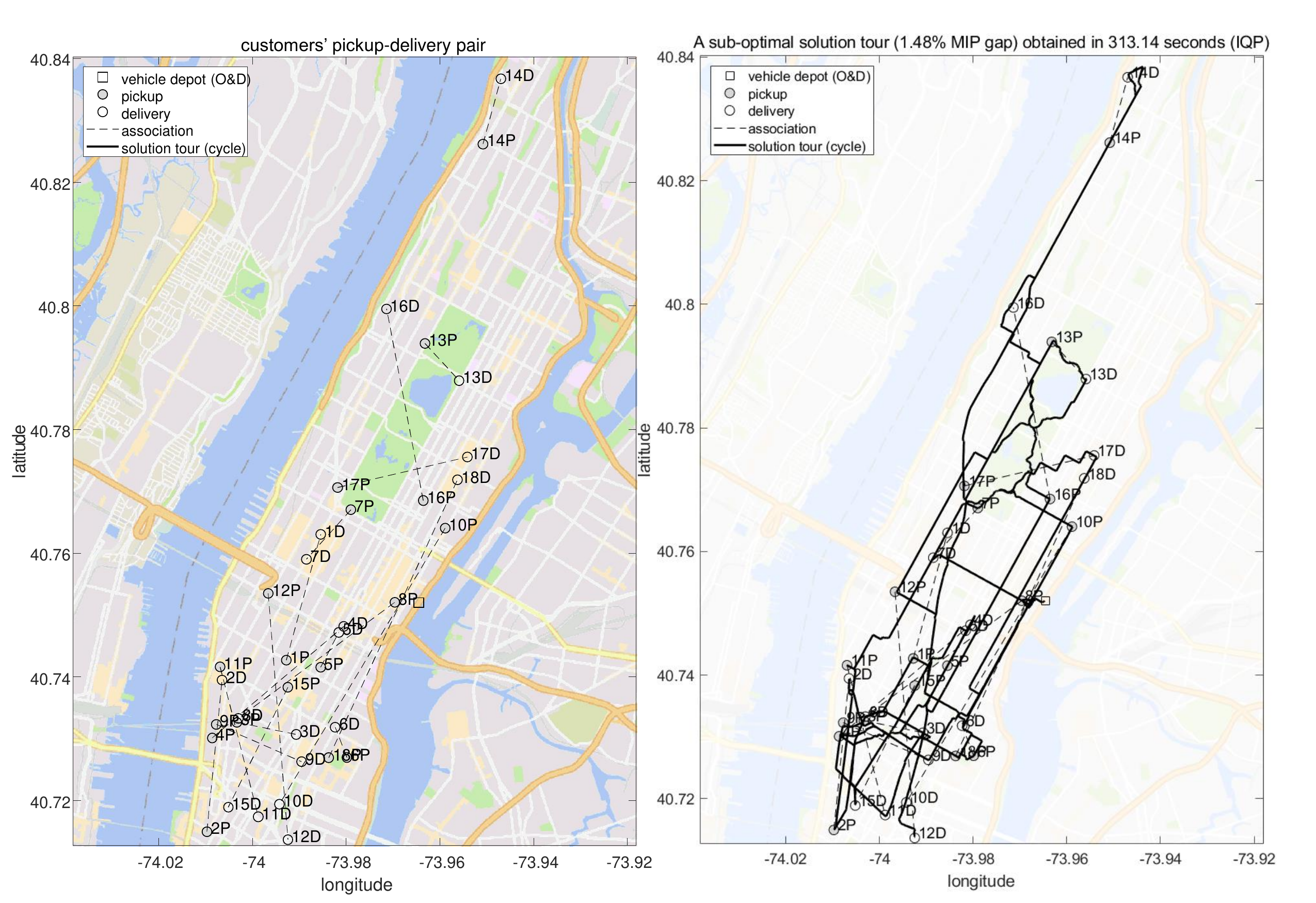}
	\caption{$n=18,\,k=2,\,q=2$}
	\label{fig:fig141}
    \end{subfigure}   
     \caption{An illustration of the sub-optimal solutions (right) with guaranteed $2\%$ MIP gap generated using the IQP formulation where customers' demands (left) were acquired from a real-world yellow cab data during $00:00-01:00$ on $01/01/2013$ from New York City.
    Notice that for each simulation the vehicle's origin and destination depots were chosen at random and coincide with one another.
    Solutions from the state-of-the-art MILP formulations were not displayed since no feasible solution was found within the maximum solving time of $10$ minutes.}
 \label{fig:fig14}   
\end{figure}

%\subsection{Simulation results using real-world data}
\section{Conclusions}
\label{sec6}
This paper develops an Integer Quadratic Programming based formulation to the Multiple Vehicle Pick and Delivery Problem which can be made convex (when the integer variables are relaxed).
This paper also demonstrates the superior computational efficacy of this formulation when compared to existing state of the art methods on various simulated and real-world experiments. 
The focus of this paper is on problems where constraints that may reduce the size of the search space (e.g. time window constraints) may be unavailable.
However the introduction of such constraints have improved the efficacy of existing state-of-the-art Mixed Integer Linear Programming methods.
Future work will explore techniques to introduce such time-window constraints to increase the computational efficacy of the technique proposed in this paper. 

\begin{appendices}
\numberwithin{equation}{section}
% \counterwithin{theorem}{section}
\section{Relevant Results from Algebraic Graph Theory}
\renewcommand{\theequation}{A.\arabic{equation}}

This appendix describes relevant results from algebraic graph theory.
We begin with the following definition of graph isomorphism. 

% The proof of Proposition \ref{isograph} requires the definition of the the graph isomorphism.
\begin{definition}[Graph Isomorphism]
	Two graphs $G^1(V^1,E^1)$ and $G^2(V^2,E^2)$ are said to be isomorphic, denoted as $G^1 \cong G^2$, if there is a bijection $\psi:V^1 \rightarrow V^2$ such that $(u,v) \in E^1$ if and only if $(\psi(u),\psi(v)) \in E^2$.
	\label{def1}
\end{definition}

In fact, all cycle graphs can be related to one another: 
\begin{proposition}
	Any two cycle graphs with at least two vertices are isomorphic to each other.
	\label{isograph}
\end{proposition}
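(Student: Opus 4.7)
The plan is to construct an explicit bijection between the two vertex sets by walking each cycle in its natural order and pairing up the $i$th visited vertices. This reduces the claim to a routine index-matching exercise once the vertices are properly enumerated.

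First, I would observe that for two graphs to be isomorphic their vertex sets must have the same cardinality, so I read the proposition as asserting isomorphism between cycle graphs on $n$ vertices for a common $n\ge 2$; this is exactly the setting in which Proposition~\ref{prop1} invokes the result, since there both $G$ and $G^0$ share the vertex set $V$. Next, I would appeal to the definition of a cycle graph (a graph that is itself a cycle, i.e., a closed walk visiting every vertex exactly once) to enumerate the vertices of each graph along its unique cyclic traversal: write $V^1=\{u_1,\dots,u_n\}$ with $E^1=\{(u_i,u_{i+1}):1\le i\le n-1\}\cup\{(u_n,u_1)\}$, and analogously $V^2=\{v_1,\dots,v_n\}$ with $E^2=\{(v_i,v_{i+1}):1\le i\le n-1\}\cup\{(v_n,v_1)\}$. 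Such an enumeration exists by the definition of a cycle and is unique up to a cyclic rotation (and a reflection, in the undirected reading).

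Then I would define $\psi:V^1\to V^2$ by $\psi(u_i):=v_i$, which is patently a bijection. To verify the edge-preservation condition of Definition~\ref{def1}, I would check that $(u_i,u_j)\in E^1$ if and only if $j\equiv i+1\pmod{n}$, and that the identical characterization governs $(v_i,v_j)\in E^2$; hence $(u_i,u_j)\in E^1$ iff $(\psi(u_i),\psi(u_j))=(v_i,v_j)\in E^2$, which is exactly what is required.

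The only real ``obstacle'' here is notational rather than mathematical: one must commit to a canonical cyclic enumeration of the vertices of each cycle before introducing $\psi$, after which the argument reduces to a one-line index comparison modulo $n$. Consequently I expect the write-up to be very brief, with essentially all of the work devoted to fixing the enumeration unambiguously.
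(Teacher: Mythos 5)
Your proof is correct and follows essentially the same route as the paper's: enumerate the vertices of each cycle along its traversal and map the $i$th vertex of one to the $i$th vertex of the other, then check edge preservation by the index-shift characterization. You also rightly note (as the paper implicitly assumes) that the claim only makes sense for cycles with the same number of vertices, which is the setting in which the proposition is actually used.
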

\begin{proof}
	First, note that every vertex in a cycle graph has degree $2$, with $1$ in-degree and $1$ out-degree such that, without loss of generality, given any two cycle graphs $G_1(V_1,E_1)$, $G_2(V_2,E_2)$ with $n$ nodes, there are sets of directed edges, $\lbrace \lbrace (u_i,u_{i+1})\rbrace_{i=1}^{n-1},(u_n,u_1) \rbrace $, $\lbrace \lbrace (v_i,v_{i+1})\rbrace_{i=1}^{n-1},(v_n,v_1) \rbrace $, respectively. Then for every $u_i \in V_1$ and $v_i \in V_2$, there exist one-to-one maps $\psi_1:V_1 \rightarrow V_2$, $\psi_2:V_2 \rightarrow V_1$ respectively such that, 
	\[
	\psi_1(u_i) = \psi_2(v_i)=
	\begin{cases}
	i+1 & \textup{if } i \leq n-1 \\
	1 & \textup{if } i = n
	\end{cases}
	\]
	Thus, if $(u_i,u_j) \in E_1$ then $(v_i=\psi_2^{-1}\psi_1(u_i),v_j=\psi_2^{-1}\psi_1(u_j)) \in E_2$ and converse is also true trivially. Then the proposition is immediate by the Definition \ref{def1}.
	%	  
	%	
	%	
	%	
	%	ith $n$ vertices, for each $i \in 1,\dots,n-1$, $(i,\,i+1)\in E$, and $(n,\,1) \in E$.
	%Consider any two arbitrary $n$-cycle graphs $G$ and $H$. There exists a permutation map $\Pi: I \rightarrow I$ such that for all $i = 1,\dots,n$ $\Pi(V_i(G)) = V_i(H)$, which implies $(\Pi(V_i(G)),\Pi(V_{i+1})) \in E(H)$ and $(\Pi(V_n(G)),\Pi(V_1(G))) \in E(H)$. Due to Definition ?, $G \cong H$.
\end{proof}
The following theorem can be used to establish Corollary \ref{col1}, which states that one can shift all the eigenvalues of any matrix to be non-negative by adding constant terms on the diagonals of the matrix.
\begin{theorem}[Gershgorin's Disk Theorem \cite{bell1965gershgorin}]
	For any square $A$, if $\lambda$ is one of the eigenvalues of $A$ then
	\[
	\left| \lambda - A_{ii} \right| \leq \min \left\{ \sum_{j=1, j\neq i}^{n} \left| A_{ij} \right|
	,\,
	\sum_{j=1,j\neq i}^{n} \left| A_{ji} \right|
	\right\}
	,\,\,\,\,\,\,\,\,\,\,\,\textup{for some } i \in \lbrace 1,2,\dots,n \rbrace.
	\]
	\label{thm2}
\end{theorem}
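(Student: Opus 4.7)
The plan is to prove the bound in two halves by the classical eigenvector-normalization argument, then combine the two halves by exploiting the fact that $A$ and $A^{\top}$ share their spectrum.

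First, I would fix an eigenvalue $\lambda$ and a corresponding nonzero eigenvector $\bm{v}=(v_1,\dots,v_n)^{\top}$ of $A$. Choosing an index $i$ that maximises $|v_j|$ over $j\in\{1,\dots,n\}$ guarantees $|v_i|>0$. Reading off the $i$-th scalar equation of $A\bm{v}=\lambda\bm{v}$ gives $\sum_{j}A_{ij}v_j=\lambda v_i$, which I would rearrange to $(\lambda-A_{ii})v_i=\sum_{j\neq i}A_{ij}v_j$. Dividing by $v_i$, applying the triangle inequality, and using $|v_j/v_i|\leq 1$ for every $j$ yields
\[
|\lambda-A_{ii}|\leq \sum_{j\neq i}|A_{ij}|,
\]
which is the row-sum Gershgorin bound at this same index $i$.

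Next, I would invoke the fact that $\det(A-\lambda I)=\det(A^{\top}-\lambda I)$, so $A$ and $A^{\top}$ have identical characteristic polynomials and hence identical eigenvalues. Applying the preceding argument verbatim to $A^{\top}$—with a right eigenvector of $A^{\top}$ (equivalently, a left eigenvector of $A$) replacing $\bm{v}$—produces an index $k$ satisfying $|\lambda-A_{kk}|\leq \sum_{j\neq k}|A_{jk}|$, the column-sum bound. Combining the two statements, for every eigenvalue $\lambda$ there is an index at which $|\lambda-A_{\cdot\cdot}|$ is dominated by the smaller of the two off-diagonal $1$-norms, matching the $\min$ on the right-hand side of the theorem once the tighter of the two indices is selected.

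The main obstacle will be the bookkeeping of the index $i$ in the final step: the row-sum argument picks $i$ as the argmax of $|v_j|$ for a right eigenvector, whereas the column-sum argument picks its index as the argmax for a left eigenvector, and these indices need not coincide a priori. I would therefore present each of the two bounds at its own maximising index and then fold them into the theorem's existential quantifier ``for some $i$'' by taking whichever index gives the tighter estimate, noting that the $\min$ on the right of the stated inequality is exactly what this selection realises. No further machinery beyond the triangle inequality and the spectral invariance $A\leftrightarrow A^{\top}$ is needed.
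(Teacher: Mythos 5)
The paper itself gives no proof of Theorem~\ref{thm2}; it is quoted from the literature, so your argument has to stand entirely on its own. The two halves you prove are correct and standard: the maximizing-coordinate argument yields an index $i$ with $\left|\lambda-A_{ii}\right|\leq R_i:=\sum_{j\neq i}\left|A_{ij}\right|$, and running the same argument on $A^{\top}$ (which shares the spectrum) yields an index $k$ with $\left|\lambda-A_{kk}\right|\leq C_k:=\sum_{j\neq k}\left|A_{jk}\right|$. The gap is precisely the step you flag and then wave away: when $i\neq k$ there is no index at which you control \emph{both} sums. At $i$ you have only the row bound, and $C_i$ may well be smaller than $\left|\lambda-A_{ii}\right|$; symmetrically at $k$. ``Taking whichever index gives the tighter estimate'' therefore does not produce a single $i$ realizing $\min\{R_i,C_i\}$, which is what the stated inequality demands.

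No amount of bookkeeping can close this, because the statement with the $\min$ taken at one common index is false. For $A=\left(\begin{smallmatrix}0 & 1\\ 1/4 & 0\end{smallmatrix}\right)$ the eigenvalues are $\pm 1/2$, while $\min\{R_i,C_i\}=1/4$ for both $i=1,2$, so neither disk contains the eigenvalue $1/2$. What your argument actually establishes is the correct form of the theorem: every eigenvalue lies in the union of the row disks \emph{and} in the union of the column disks, i.e.\ in the intersection of the two unions rather than in the union of the pointwise intersections. The legitimate single-index refinement mixing rows and columns is Ostrowski's bound $\left|\lambda-A_{ii}\right|\leq R_i^{\alpha}C_i^{1-\alpha}$ for some $i$ and any fixed $\alpha\in[0,1]$, which together with the AM--GM inequality also justifies disks of radius $(R_i+C_i)/2$, but never radius $\min\{R_i,C_i\}$. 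You should also note that Corollary~\ref{col1}, which shifts the diagonal by exactly these $\min$ quantities, inherits the same defect; the row sums alone, or the averages $(R_i+C_i)/2$, would make that corollary sound.
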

\begin{corollary}
	For a given matrix $A$, define a matrix $B$ of the same size as:
	\[
	B_{ij} = 
	\begin{cases}
	A_{ij} + \min\left\{ \sum_{j=1, j\neq i}^{n} \left| A_{ij} \right|
	,\,
	\sum_{j=1,j\neq i}^{n} \left| A_{ji} \right|
	\right\} + \alpha,  & \textup{if } i = j, \\
	A_{ij}, & \textup{otherwise},
	\end{cases}
	\]
	For each $\alpha \geq 0$ ($\alpha > 0$), $B \succeq 0$ ($B\succ 0)$.
	\label{col1}
\end{corollary}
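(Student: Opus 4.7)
The plan is to invoke Theorem \ref{thm2} (Gershgorin's Disk Theorem) directly on the perturbed matrix $B$. The construction of $B$ is precisely calibrated so that the left edge of its $i$th Gershgorin disk sits at $A_{ii}+\alpha$: the diagonal is shifted by exactly the Gershgorin radius of $A$ at row/column $i$ (plus $\alpha$), while the off-diagonal entries are left untouched. So the entire proof should reduce to tracking how the Gershgorin bound transforms under this shift.

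First, I would observe that since $B_{ij}=A_{ij}$ for all $i \neq j$, the off-diagonal row and column absolute sums of $B$ coincide with those of $A$, so the Gershgorin radius at index $i$ is
\[
R_i \;=\; \min\!\Bigl\{\sum_{j \neq i}|B_{ij}|,\;\sum_{j \neq i}|B_{ji}|\Bigr\} \;=\; \min\!\Bigl\{\sum_{j \neq i}|A_{ij}|,\;\sum_{j \neq i}|A_{ji}|\Bigr\}.
\]
By construction, $B_{ii} = A_{ii} + R_i + \alpha$, so $B_{ii}-R_i = A_{ii}+\alpha$. Applying Theorem \ref{thm2} to $B$ then gives, for every eigenvalue $\lambda$ of $B$, some index $i$ with $|\lambda - B_{ii}| \le R_i$, whence $\lambda \ge B_{ii}-R_i = A_{ii}+\alpha$. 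Taking the worst case over $i$ yields the uniform lower bound $\lambda \ge \min_j (A_{jj}+\alpha)$ on the entire spectrum of $B$.

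To conclude, under the standing understanding in which this corollary will be invoked (namely within the proof of Lemma \ref{mainlem}, where the relevant $A_{ii}$'s are non-negative, and which can always be arranged in the MVPDP context via Proposition \ref{prop0} since the diagonal of the cost matrix $C$ can be chosen freely without altering the tour cost), this bound becomes $\lambda \ge \alpha$, so $\alpha \ge 0$ yields $B \succeq 0$ and $\alpha > 0$ yields $B \succ 0$. The main and essentially only subtlety is ensuring that Gershgorin produces a \emph{real} lower bound on the spectrum, which requires $B$ to be symmetric so that its eigenvalues are real; symmetry of $B$ is inherited from symmetry of $A$ (since the diagonal perturbation preserves it), and this is the hypothesis under which the PSD conclusion is meaningful.
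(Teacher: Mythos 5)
Your proof is correct and follows essentially the same route as the paper, whose entire argument is to invoke Gershgorin's theorem together with the eigenvalue characterization of positive (semi-)definiteness. You are also right to flag the implicit hypotheses that the paper leaves unstated: as literally written the corollary needs $A$ symmetric (so the spectrum is real and the PSD conclusion is meaningful) and $A_{ii} \geq 0$ (otherwise the Gershgorin lower bound $A_{ii}+\alpha$ on the eigenvalues need not be non-negative, e.g.\ $A=-I$ gives $B=(\alpha-1)I$), both of which do hold in the only place the corollary is applied.
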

The proof of the corollary is immediate from Theorem \ref{thm2} because a given matrix is positive definite (positive semi-definite) if and only of its eigenvalue are positive (non-negative). 

The proof of Proposition \ref{thm1} depends on the following proposition.
\begin{proposition}[PSD test using Schur's complement \cite{haynsworth1968schur}]
	For any symmetric matrix $M$ of the form
	\[
	M = 
	\left[
	\begin{array}{c|c}
	A & B \\
	\hline
	B^{\top} & C
	\end{array}
	\right],
	\]
	if $A$ is invertible and $A \succeq 0$ then  $M \succeq 0$ if and only if $C - B^{\top}A^{-1}B \succeq 0$.
	\label{schur}
\end{proposition}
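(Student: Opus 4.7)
The plan is to establish the equivalence by exhibiting an explicit congruence that block-diagonalizes $M$, and then appealing to Sylvester's law of inertia. Writing $S := C - B^{\top} A^{-1} B$ for the Schur complement, I would first verify by direct $2 \times 2$ block multiplication the factorization
\[
M = L^{\top} \begin{bmatrix} A & 0 \\ 0 & S \end{bmatrix} L, \qquad L := \begin{bmatrix} I & A^{-1} B \\ 0 & I \end{bmatrix}.
\]
This identity uses only the invertibility of $A$ (so that $A^{-1} B$ is defined) and the symmetry of $M$ (so that the upper-right block is $B$ and the lower-left block is $B^{\top}$). Expanding the right-hand side yields the four blocks $A$, $B$, $B^{\top}$, and $B^{\top} A^{-1} B + S = C$, matching $M$ exactly.

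Next I would observe that $L$ is unit upper triangular, hence invertible with $\det L = 1$, so the map $N \mapsto L^{\top} N L$ is an invertible congruence. By Sylvester's law of inertia this congruence preserves the signature of any symmetric matrix, so $M \succeq 0$ if and only if the block-diagonal matrix $\mathrm{diag}(A, S)$ is positive semidefinite, which in turn holds if and only if $A \succeq 0$ and $S \succeq 0$ both hold. Since the hypothesis of the proposition already supplies $A \succeq 0$, the condition $M \succeq 0$ reduces precisely to $S = C - B^{\top} A^{-1} B \succeq 0$, giving both implications.

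The only real obstacle is getting the block-multiplication identity correct, which is routine but has to place $A A^{-1} = I$ in the right spot; everything after that is standard. For robustness, I would also note the equivalent "completing-the-square" presentation
\[
\begin{bmatrix} u \\ v \end{bmatrix}^{\top} \! M \! \begin{bmatrix} u \\ v \end{bmatrix} = (u + A^{-1} B v)^{\top} A (u + A^{-1} B v) + v^{\top} S v,
\]
which exhibits $M$'s quadratic form as a sum of two quadratic forms in the new variables $(u + A^{-1} B v, v)$. Because the change of variables is invertible, nonnegativity of the left-hand side for all $(u,v)$ is equivalent to nonnegativity of both summands separately, recovering the same conclusion without explicitly invoking Sylvester's law. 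I would use whichever of these two presentations reads more cleanly in the final write-up.
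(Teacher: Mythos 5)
The paper states this proposition as a known result, citing Haynsworth (1968), and supplies no proof of its own, so there is nothing internal to compare your argument against. Your congruence argument is the standard proof and is correct: the factorization $M = L^{\top}\,\mathrm{diag}(A,S)\,L$ with $L = \bigl[\begin{smallmatrix} I & A^{-1}B \\ 0 & I \end{smallmatrix}\bigr]$ checks out (using the symmetry of $A$ inherited from $M$, so that $(A^{-1}B)^{\top} = B^{\top}A^{-1}$), and since $L$ is invertible, positive semidefiniteness of $M$ is equivalent to that of $\mathrm{diag}(A,S)$, which under the standing hypothesis $A \succeq 0$ reduces exactly to $S = C - B^{\top}A^{-1}B \succeq 0$; the completing-the-square variant you mention is the same argument in quadratic-form language and is equally valid.
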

%\begin{theorem}[]
%	\end{theorem}
%\onecolumn

% \begin{corollary}
%     Let $\beta_i$ be defined as in \eqref{beta_def}. For all $i\in I$, $\beta_i>\beta_{i+1}.$
% \end{corollary}

% The proof is straightforward through the formation of $\beta_i$ and thus is omitted. 

%The proof uses the Schur's complement (see Proposition \ref{schur}) recursively. 

\section{The Proof of Lemma \ref{mainlem}}
\renewcommand{\theequation}{B.\arabic{equation}}

This appendix describes the proof of Lemma 4.5, which depends on several propositions.
Let $\beta_i$ be defined recursively by:
\begin{equation}
\beta_{i+1} =\beta_0-\frac{1}{4 \beta_i},\,\,\,\,\, i =0,1,2,\dots,
\label{beta_def}
\end{equation}
where $\beta_0 = \gamma$.
The proof of Proposition \ref{lem1} depends on the following proposition.
% \begin{proposition}
%   For a given $C \in \mathbb{R}^{v \times v}$, and $\alpha >0$ ($\alpha \geq 0$),  $Q\succ 0$($Q\succeq 0$) only if $\beta_i \geq 0$ for all $i \in I=\lbrace 0,1,\dots,\frac{n}{k}-2\rbrace$ where

% \end{proposition}
\begin{proposition}
	Let $\beta_i$ be defined as in \eqref{beta_def}. 
	if $\beta_0\geq1$ then $\beta_{i}>\frac{\sqrt{\beta_0^2-1}+\beta_0}{2}$ for all $i \geq 0$.
	\label{prop3}
\end{proposition}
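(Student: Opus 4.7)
The plan is to recognize that the bound $\frac{\sqrt{\beta_0^2-1}+\beta_0}{2}$ is precisely the larger fixed point of the map
\[
f(x) := \beta_0 - \frac{1}{4x},
\]
and then prove the stated inequality by straightforward induction on $i$, exploiting the monotonicity of $f$ on the positive real axis.

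First I would derive the two fixed points of $f$ by solving $x = \beta_0 - \tfrac{1}{4x}$, which rearranges to the quadratic $4x^2 - 4\beta_0 x + 1 = 0$ and yields the roots $\beta^{\pm} = \frac{\beta_0 \pm \sqrt{\beta_0^2-1}}{2}$. Note that the hypothesis $\beta_0 \geq 1$ guarantees both roots are real and positive, with $\beta^+$ equal to the target lower bound. I would also record the elementary fact that $f'(x) = \frac{1}{4x^2} > 0$ for all $x > 0$, so $f$ is strictly increasing on $(0,\infty)$.

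For the base case $i=0$, I would verify $\beta_0 > \beta^+$: this is equivalent to $\beta_0 > \sqrt{\beta_0^2-1}$, which follows by squaring (both sides are nonnegative since $\beta_0 \geq 1$) and observing that $\beta_0^2 > \beta_0^2 - 1$. For the inductive step, I assume $\beta_i > \beta^+ > 0$. Then applying $f$ and using its monotonicity on the positives, together with the fixed-point identity $f(\beta^+) = \beta^+$, gives
\[
\beta_{i+1} = f(\beta_i) > f(\beta^+) = \beta^+,
\]
which closes the induction.

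The one subtlety worth being careful about is ensuring $\beta_i$ stays positive so that $f$ can actually be applied and monotonicity invoked; but this follows for free from $\beta_i > \beta^+ > 0$ (the latter because $\beta_0 \geq 1$ forces $\sqrt{\beta_0^2-1} \geq 0$ and thus $\beta^+ > 0$). Beyond that the argument is essentially a one-line contraction/monotonicity argument, so I do not anticipate any real obstacle; the only step requiring any care at all is the fixed-point computation and checking that the larger root is indeed the one appearing in the statement.
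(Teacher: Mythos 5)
Your proof is correct and is essentially the same induction as the paper's: the paper also shows the base case $\beta_0>\frac{\sqrt{\beta_0^2-1}+\beta_0}{2}$ and then verifies algebraically that $\beta_0-\frac{1}{2\sqrt{\beta_0^2-1}+2\beta_0}=\frac{\sqrt{\beta_0^2-1}+\beta_0}{2}$, which is exactly your fixed-point identity $f(\beta^+)=\beta^+$ combined with monotonicity of $f$. Your framing via the fixed points of $f(x)=\beta_0-\tfrac{1}{4x}$ is a cleaner articulation of the same computation (and, incidentally, the paper's displayed algebra contains a typo, writing $\sqrt{2\beta_0^2-1}$ where $\sqrt{\beta_0^2-1}$ is meant).
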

\begin{proof}[Proof of Proposition \ref{prop3}]
	We prove this proposition by induction.
	Notice that $\sqrt{\beta_0^2-1}\geq 0$, it is obvious that
	$$0 \leq \frac{\sqrt{\beta_0^2-1}+\beta_0}{2}<\beta_0 $$
	Assume $\beta_k>\frac{\sqrt{\beta_0^2-1}+\beta_0}{2}$ where $k\geq0$, and we need to show $\beta_{k+1}>\frac{\sqrt{\beta_0^2-1}+\beta_0}{2}$. By \eqref{beta_def}, we have:
	\begin{align}
	\beta_{k+1} =\beta_0-\frac{1}{4 \beta_k} & >\beta_0-\frac{1}{2\sqrt{2\beta_0^2-1}+2\beta_0}\nonumber\\
	&=\frac{2\beta_0\sqrt{2\beta_0^2-1}+2\beta_0^2-1}{2\sqrt{2\beta_0^2-1}+2\beta_0}\nonumber\\
	&= \frac{\sqrt{\beta_0^2-1}+\beta_0}{2}.\nonumber
	\end{align}
	
\end{proof}
The proof of Proposition \ref{thm1} depends on the following proposition.
\begin{proposition}
	For each $\epsilon > 0$, $\beta_0\geq 1+\epsilon$ implies
	\begin{equation}
	\beta_0-\frac{1}{4\beta_0}-\lim_{l\rightarrow \infty}\sum_{i=1}^l\frac{1}{4^{i+1}\beta_i\prod_{j=0}^{i-1}\beta_j^2}>0. \label{lem1_ineq}
	\end{equation}
	with $\beta_i$  defined as in \eqref{beta_def}.
	\label{lem1}
\end{proposition}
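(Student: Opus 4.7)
The plan is to recognize that the infinite sum in \eqref{lem1_ineq} telescopes. Write $u_i := \frac{1}{4^{i+1}\beta_i \prod_{j=0}^{i-1}\beta_j^2}$ for the $i$-th summand (and $u_0 := \frac{1}{4\beta_0}$). The key claim to establish first is the telescoping identity $u_i = \beta_i - \beta_{i+1}$ for every $i \geq 0$, which immediately reduces the left-hand side of \eqref{lem1_ineq} to $\lim_{L\to\infty}\beta_{L+1}$: since $\beta_0 - \frac{1}{4\beta_0} = \beta_1$ and $\sum_{i=1}^{L} u_i = \sum_{i=1}^{L}(\beta_i - \beta_{i+1}) = \beta_1 - \beta_{L+1}$, proving \eqref{lem1_ineq} reduces to showing $\lim_{L\to\infty}\beta_{L+1} > 0$.

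To prove the telescoping identity I would proceed by induction on $i$. The base case is immediate from \eqref{beta_def}: $\beta_0 - \beta_1 = \frac{1}{4\beta_0} = u_0$. For the inductive step, \eqref{beta_def} gives
\[
\beta_i - \beta_{i+1} \;=\; \frac{1}{4\beta_{i-1}} - \frac{1}{4\beta_i} \;=\; \frac{\beta_{i-1}-\beta_i}{4\beta_{i-1}\beta_i},
\]
while the product structure of $u_i$ yields the ratio $u_i/u_{i-1} = 1/(4\beta_{i-1}\beta_i)$. Combined with the induction hypothesis $u_{i-1} = \beta_{i-1} - \beta_i$, this closes the induction.

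Next I would verify that $\{\beta_i\}$ converges to a strictly positive limit. From the identity $\beta_{i+1} - \beta_i = (\beta_i - \beta_{i-1})/(4\beta_{i-1}\beta_i)$, the sign of consecutive differences is preserved; since $\beta_1 - \beta_0 = -\frac{1}{4\beta_0} < 0$, a short induction shows $\{\beta_i\}$ is strictly monotone decreasing. Proposition \ref{prop3} supplies the uniform lower bound $\beta_i > (\sqrt{\beta_0^2-1}+\beta_0)/2 > 0$, so $\beta_\infty := \lim_i \beta_i$ exists and, by passing to the limit in the recurrence \eqref{beta_def}, is a fixed point of $x \mapsto \beta_0 - 1/(4x)$, i.e., one of $(\beta_0 \pm \sqrt{\beta_0^2-1})/2$. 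Applying Proposition \ref{prop3} once more forces $\beta_\infty = (\beta_0 + \sqrt{\beta_0^2-1})/2 \geq (1+\epsilon)/2 > 0$, completing the proof.

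The main obstacle, in my view, is spotting the telescoping identity $u_i = \beta_i - \beta_{i+1}$; once one computes $u_i/u_{i-1} = 1/(4\beta_{i-1}\beta_i)$ and matches it against the exact formula for $\beta_i - \beta_{i+1}$ derived from \eqref{beta_def}, the rest is a routine monotone-convergence argument. Proposition \ref{prop3} enters twice: once to keep the $\beta_i$ bounded away from zero so that every $u_i$ is well-defined and positive, and again to pin down which of the two possible fixed points the sequence must converge to. The hypothesis $\beta_0 \geq 1+\epsilon$ is used only at the very end to guarantee $\beta_\infty > 0$ with a quantitative margin.
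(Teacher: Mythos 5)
Your proof is correct, and it takes a genuinely different --- and sharper --- route than the paper's. The paper bounds the tail $\sum_{i\geq 1} P_i$ (its $P_i$ is exactly your $u_i$) by a geometric series: Proposition \ref{prop3} gives $4\beta_i\beta_{i+1} > (1+2\tau)^2$ with $\tau = \tfrac{\sqrt{\beta_0^2-1}+\beta_0-1}{2}$, so the sum is at most $\tfrac{1}{4\beta_0}\tfrac{(1+2\tau)^2}{(1+2\tau)^2-1}$, and the proof concludes by checking that $\beta_0 - \tfrac{1}{4\beta_0}\tfrac{(1+2\tau)^2}{(1+2\tau)^2-1} > 0$ when $\beta_0 > 1$. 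Your telescoping identity $u_i = \beta_i - \beta_{i+1}$ replaces this estimate with an exact evaluation: since $\beta_0 - \tfrac{1}{4\beta_0} = \beta_1$, the left-hand side of \eqref{lem1_ineq} is precisely $\lim_{i\to\infty}\beta_i$, and your monotone-convergence plus fixed-point argument identifies this limit as $\tfrac{\beta_0+\sqrt{\beta_0^2-1}}{2} \geq \tfrac{1+\epsilon}{2} > 0$. This buys two things the paper's estimate does not: the exact value of the quantity being bounded, and a rigorous proof that the inequality already holds at $\beta_0 = 1$ (where the limit is $\tfrac12$), a case the paper only supports by numerical experiment. Both arguments rely on Proposition \ref{prop3} to keep the $\beta_i$ positive and bounded away from zero; your extra use of it to select the larger fixed point is legitimate (and in fact dispensable, since the lower bound $\tfrac{\beta_0+\sqrt{\beta_0^2-1}}{2}$ already makes the limit positive without identifying it). One cosmetic slip to fix: in the inductive step the middle expression should read $\tfrac{1}{4\beta_i}-\tfrac{1}{4\beta_{i-1}}$; the final fraction $\tfrac{\beta_{i-1}-\beta_i}{4\beta_{i-1}\beta_i}$ that you actually use is the correct one.
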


\begin{proof}[Proof of Proposition \ref{lem1}]
	Consider $\beta_0 > 1$.
	Based on Proposition \ref{prop3}, for each $i \geq 1$ we have:
	$$\beta_i > \frac{\sqrt{\beta_0^2-1}+\beta_0}{2} = \frac{1}{2} + \frac{\sqrt{\beta_0^2-1}+\beta_0-1}{2}. $$
	Let $\tau:= \frac{\sqrt{\beta_0^2-1}+\beta_0-1}{2}$, then $\tau >0$ and $\beta_i > \frac{1}{2}+\tau>0$. 
	For the sake of convenience, let us define a sequence $(P_n)_0^\infty$ by
	$$P_{i+1} = P_{i}\frac{1}{4\beta_i\beta_{i+1}},\,\,i=0,\,1,\dots, \text{ with } P_0 = \frac{1}{4\beta_0}$$
	Since $P_n>0$ for each $n$, it suffices to show that \eqref{lem1_ineq} holds as $l \rightarrow +\infty$.
	Notice that $4\beta_i\beta_{i+1}>(1+2\tau)^2$, then for $i>0$, $P_i<P_0\frac{1}{(1+\tau)^2} $. Thus we have:
	\begin{align}
		\lim_{l \rightarrow \infty}\sum_{i=1}^{l}P_i &< P_0\left(1+\frac{1}{(1+2\tau)^2}+\frac{1}{(1+2\tau)^4} + \dots\right)\nonumber\\
		& = \lim_{k\rightarrow+\infty}P_0\frac{1\times\left(1-\left(\frac{1}{(1+2\tau)^2} \right)^k \right)}{1-\frac{1}{(1+2\tau)^2}}\nonumber\\
		& = P_0\frac{(1+2\tau)^2}{(1+2\tau)^2-1}\nonumber.
	\end{align}
	Hence,
	$$\beta_0-\frac{1}{4\beta_0}-\lim_{l \rightarrow \infty}\sum_{i=1}^{l}\frac{1}{4^{i+1}\beta_i\prod_{j=0}^{i-1}\beta_j^2} > \beta_0-\frac{1}{4\beta_0}\frac{(1+2\tau)^2}{(1+2\tau)^2-1}.$$
    We note that $\beta_0-\frac{1}{4\beta_0}\frac{(1+2\tau)^2}{(1+2\tau)^2-1} > 0$ implies \eqref{lem1_ineq}. Also, by substituting $\tau$ with the function of $\beta_0$, it can be verified that if $\beta_0 >1$ then 
    $\beta_0-\frac{1}{4\beta_0}\frac{(1+2\tau)^2}{(1+2\tau)^2-1}>0$, which provides our sufficient condition.
\end{proof}

	We have verified though a suite of numerical simulations that $\beta_0$ can be set as low as 1, for which \eqref{lem1_ineq} holds for $l$ up to $5\times10^7$. 
% 	Such large value of $l$ means that one can use $\beta_0=1$ to handle most of the real-world problems, nevertheless, we have not been able yet to find analytical way to show that $\beta_0=1$ is the tightest lower bound that satisfies \eqref{lem1_ineq}.
	% \jinsun{Let's hold this remark for a second, since we know $\beta_0=1$ actually works for our problem, and we just need to find a better way to say 1 is a reasonable and the best choice for us.}  
	
The proof of Lemma \ref{mainlem} depends on the following proposition.
\begin{proposition}
	For a given symmetric cost matrix matrix $C \in \mathbb{R}^{v \times v}$ that is positive semi-definite, if $\gamma > 1$  then $Q\succeq 0$.
	\label{thm1}
\end{proposition}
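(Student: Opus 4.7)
The plan is to exploit the symmetry of $C$ to recognize that $Q$ is a Kronecker product, after which positive semidefiniteness collapses to a single circulant eigenvalue computation on a much smaller matrix.

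First I will observe that symmetry of $C$ gives $\overline{C} = (C + C^{\top})/2 = C$, so every block appearing in the display \eqref{qmat} is a scalar multiple of $C$: each diagonal block equals $\gamma C$, and each of the off-diagonal blocks (including the two wrap-around corner blocks at positions $(1,v)$ and $(v,1)$) equals $\tfrac{1}{2}C$. Consequently $Q$ factors as $Q = M \otimes C$, where $M \in \mathbb{R}^{v\times v}$ is the symmetric circulant matrix whose first row is $[\gamma,\,\tfrac{1}{2},\,0,\,\ldots,\,0,\,\tfrac{1}{2}]$.

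Next I will invoke the standard spectral property of Kronecker products: since both $M$ and $C$ are symmetric, the eigenvalues of $M \otimes C$ are the products $\lambda_p(M)\,\mu_q(C)$, so $M \otimes C \succeq 0$ whenever $M \succeq 0$ and $C \succeq 0$. Because $C \succeq 0$ is assumed, the task reduces to showing $M \succeq 0$ whenever $\gamma > 1$. To finish, I diagonalize $M$ in the discrete Fourier basis: the eigenvalues of a circulant matrix are the DFT of its first row, giving
\[
\lambda_k(M) \;=\; \gamma \,+\, \tfrac{1}{2}\omega^{k} \,+\, \tfrac{1}{2}\omega^{-k} \;=\; \gamma \,+\, \cos\!\Big(\tfrac{2\pi k}{v}\Big),\qquad k = 0, 1, \ldots, v-1,
\]
where $\omega$ is a primitive $v$-th root of unity. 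Since $\cos(\cdot)\geq -1$, each eigenvalue satisfies $\lambda_k(M) \geq \gamma - 1 > 0$ under the hypothesis $\gamma > 1$; hence $M \succ 0$ and therefore $Q = M \otimes C \succeq 0$.

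The main obstacle, if one insists on the Schur-complement route already set up in Appendix B, is precisely what Propositions \ref{prop3} and \ref{lem1} control: an iterated application of Proposition \ref{schur} peels off one block row/column at a time, the sequence $\beta_i$ tracks the normalized diagonal entries produced by each reduction, and one must bound the correction series $\sum_i (4^{i+1}\beta_i\prod_{j}\beta_j^2)^{-1}$ uniformly. The Kronecker reduction short-circuits this entire chain by converting positive semidefiniteness of the $v^2 \times v^2$ block matrix into a one-dimensional eigenvalue estimate on the $v \times v$ circulant $M$, and I expect it to be the cleanest path, with the one subtlety being to verify that the block factorization is exact in the symmetric case (which follows directly by comparing entries of $M \otimes C$ with the display \eqref{qmat}).
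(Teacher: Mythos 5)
Your proof is correct, and it takes a genuinely different --- and considerably shorter --- route than the one in Appendix B. The paper proves Proposition \ref{thm1} by peeling off one block row and column of $Q$ at a time with Schur complements (Proposition \ref{schur}), which forces it to track the recursively defined scalars $\beta_i$ of \eqref{beta_def} and to bound the resulting correction series via Propositions \ref{prop3} and \ref{lem1}; the hypothesis $\gamma>1$ appears there only as a sufficient condition for that series bound. Your observation that symmetry of $C$ collapses every block of \eqref{qmat} to a scalar multiple of $C$, so that $Q=M\otimes C$ with $M$ the symmetric circulant having first row $[\gamma,\tfrac{1}{2},0,\dots,0,\tfrac{1}{2}]$, replaces the entire recursion with the single computation $\lambda_k(M)=\gamma+\cos(2\pi k/v)\ge \gamma-1$, combined with the standard fact that the spectrum of a Kronecker product of symmetric matrices is the set of pairwise products of the factors' eigenvalues. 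Beyond brevity, your argument buys two things the paper's does not: it shows that the weaker threshold $\gamma\ge 1$ already gives $Q\succeq 0$ (and since $v=2(n+k)+1$ is odd, $\cos(2\pi k/v)>-1$ strictly, so in fact $M\succ 0$ even at $\gamma=1$), which settles rigorously what the paper only checks numerically after Proposition \ref{lem1}; and it sidesteps the bookkeeping of the recursive reduction (e.g.\ the index range $i\in\lbrace 0,\dots,\tfrac{n}{k}-1\rbrace$ and the passage to an infinite series for what is a finite sequence of Schur complements). The one step you flagged --- that $M\otimes C$ matches \eqref{qmat} entry for entry once $C=C^{\top}=\overline{C}$, including the two corner blocks --- is indeed the only thing that needs explicit verification, and it holds.
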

\begin{proof}[Proof of Proposition \ref{thm1}]
	% \[
	% Q:= 
	% \underbracket{\begin{bmatrix}
	% 	\gamma\overline{C} & C/2      & 0  & \cdots & 0 & C/2 \\
	% 	C/2   & \gamma\overline{C} &  C/2 & 0 & \cdots & 0\\
	% 	0 & C/2 & \ddots & \ddots & \ddots & \vdots \\
	% 	\vdots& \ddots & \ddots & \ddots & C/2 & 0 \\
	% 	0   & \cdots      & 0  &  C/2   &\gamma\overline{C} & C/2  \\
	% 	C/2 & 0      & \cdots & 0 & C/2 & \gamma\overline{C}
	% 	\end{bmatrix}}_{\frac{n}{k} \textup{ blocks}}
	% \]
	\
	Consider a real square matrix $\Gamma_i$:
	\[
	\Gamma_i = \left[\begin{array}{c|c}
	A_i & b_i \\
	\hline
	b_i^{\top} & D_i 
	\end{array}
	\right]
	\]
	We note that by 
	Proposition \ref{schur} (PSD test using Schur’s complement), $\Gamma_i \succeq 0$ if and only if $A_i \succeq 0$ and $D_i - b_i^{\top} A_i^{-1} b_i \succeq 0$. Let us define a sequence of matrices recursively by
	\[
	\Gamma_{i+1}:= D_i - b_i^{\top} A_i^{-1} b_i,
	\]
	where $\Gamma_{0} := Q$. We note that $i \in \lbrace 0,1,\dots, \frac{n}{k}-1 \rbrace$. Then it is a routine to verify that for each $i$
	\[
	\Gamma_i 
	=  \left[\begin{array}{c|ccccc}
	\beta_i \overline{C} & C/2 & 0 & \cdots & 0 & (-1)^i\frac{C}{2^{i+1}\prod_{j=1}^{i-1}\beta_j}  \\ \hline
	C^{\top}/2 & \beta_i \overline{C} & C/2 & 0 & \cdots & 0 \\
	0 & C^{\top}/2 & \ddots &\ddots & \ddots & \vdots \\
	\vdots & \ddots & \ddots & \ddots & C/2 & 0 \\  
	0 & \cdots & 0 & C^{\top}/2 & \beta_i \overline{C} & C/2 \\
	(-1)^i\frac{C^{\top}}{2^{i+1}\prod_{j=1}^{i-1}\beta_j} & 0 & \cdots & 0 & C^{\top}/2 & C \left(\beta_0 - \frac{1}{4\beta_0} -\sum_{l=1}^{i-1}\frac{1}{4^{l+1}\beta_i\prod_{j=0}^{l-1}\beta_j^2} \right)
	\end{array}\right]
	\]
	% We note that by the Schur's complement, $Q_i \succ 0$ if and only if $A_i \succ 0$ and $C_i - b_i^{\top} A_i^{-1} b_i$. We define
	% \[
	% \Gamma_{i+1}:= C_i - b_i^{\top} A_i^{-1} b_i.
	% \]
	where $A_i =  \beta_i \overline{C}$, $b_i = \left[C/2 ,\, 0 ,\, \cdots  0 ,\, (-1)^i\frac{C}{2^{i+1}\prod_{j=1}^{i-1}\beta_j} \right]$, and $D_i$ is the block matrix from the bottom left.
	Noting that $C$ is symmetric, and after applying the Schur's complement recursively, it can be verified that $Q \succeq 0 $ if and only if $A_i \succeq 0$ for all $i \in \lbrace 0,1,\dots,\frac{n}{k}-1 \rbrace$ and $\Gamma_{\frac{n}{k}-1} \succeq 0$. Since $\beta_i >0$ implies $A_i \succeq 0$, we only need to show that for a given condition $\Gamma_{\frac{n}{k}-1}$ is positive semi-definite. Since 
	\[
	\Gamma_{\frac{n}{k}-1} = 
	C \left( \beta_0 -\frac{1}{4\beta_0}-\sum_{i=1}^{\frac{n}{k}}\frac{1}{4^{i+1}\beta_i\prod_{j=0}^{i-1}\beta_j^2} 
	\right),
	\]
	for each $C \succeq 0$, and  $\gamma=\beta_0 > 1$, by the Proposition \ref{lem1},  $\Gamma_{\frac{n}{k}-1} \succeq 0$, which completes the proof.
\end{proof}
\begin{proof}[Proof of Lemma \ref{mainlem}]
For the first part of the proof, we consider $\gamma =0$. By plugging in $\gamma = 0$ in \eqref{qmat}, we obtain:
\begin{equation}
\left.Q\middle|_{\gamma =0}\right.= 
\underbrace{\begin{bmatrix}
	0 & C/2      & 0  & \cdots & 0 & C/2 \\
	C^{\top}/2   & 0 &  C/2 & 0 & \cdots & 0\\
	0 & C^{\top}/2 & \ddots & \ddots & \ddots & \vdots \\
	\vdots& \ddots & \ddots & \ddots & C/2 & 0 \\
	0   & \cdots      & 0  &  C^{\top}/2   &0 & C/2  \\
	C^{\top}/2 & 0      & \cdots & 0 & C^{\top}/2 & 0
	\end{bmatrix}}_{v \textup{ blocks}}
\end{equation}
which we denote by $Q_0$. We note again that
\begin{align*}
    \textup{trace}(C^{\top}X^{\top}A^{0}X) &=
    \bm{x}^{\top}Q_0\bm{x}+\bm{x}^{\top}D\bm{x} -\bm{x}^{\top}D\bm{x} =\bm{x}^{\top}(Q_0+D)\bm{x} - \bm{d}^{\top}\bm{x}
\end{align*}
where $\bm{d}$ is a vector such that, $D = \textup{diag}(\bm{d})$.
We apply the Corollary \ref{col1} multiple times to $Q_0+D$ directly to find condition for $D$ which makes all the eigenvalues of the matrix sum non-negative.
For each $i = 1,\dots,v$, and $i = v^2-v+1,v^2-v+2,\dots,v^2$, we can apply Corollary \ref{col1} to obtain the condition to ensure that non-negativity of the eigenvalue. For each row: $
d_i \geq \sum_{j = 1}^v c_{ij}
$, 
and for each column:
$
d_i \geq \sum_{j = 1}^v c_{ji}.
$
By combining the two conditions, we have
\[
d_i \geq \min \left\{ \sum_{j = 1}^v c_{ij},\,\sum_{j = 1}^v c_{ji}. \right\},\,
i = 1,\dots,v,v^2-v+1,v^2-v+2,\dots,v^2.
\]
Now for each $i = v+1,v+2,\dots,v^2-v$, it must be that 
\[
d_i \geq \frac{\sum_{j = 1}^v c_{ij}+\sum_{j = 1}^v c_{ji}}{2}. 
\]
This completes the proof for the first part of the lemma.
% 	The proof for the 1st argument is immediate by the fact that $Q$ is symmetric. 
For the second part of the proof we consider $\gamma > 1$ and $D =0$. Then, by the given conditions, the 2nd argument is immediate by the Proposition \ref{thm1}.
\end{proof}
\end{appendices}
% use section* for acknowledgment
%\section*{Acknowledgment}

%\section*{References}
%\bibliographystyle{model3a-num_names}
\bibliographystyle{ieeetr}
\bibliography{reference}

\end{document}